\begin{document}
	
	\begin{frontmatter}
		\title{Central Limit Theorem for Gram-Schmidt Random Walk Design}
		\runtitle{CLT for GSW}
		\runauthor{Chatterjee, Dey, and Goswami}
		
		\begin{aug}
			\author[A]{\fnms{Sabyasachi}~\snm{Chatterjee}\ead[label=e1]{sc1706@illinois.edu}}, 
			\author[B]{\fnms{Partha S. }\snm{Dey}\ead[label=e2]{psdey@illinois.edu}}
			\and
			\author[C]{\fnms{Subhajit}~\snm{Goswami}\ead[label=e3]{goswami@math.tifr.res.in}} 
			\runauthor{Chatterjee, Dey, and Goswami}
			\address[A]{Department of Statistics, University of Illinois Urbana--Champaign,\printead[presep={,\ }]{e1}}
			\address[B]{Department of Mathematics, University of Illinois Urbana--Champaign,\printead[presep={,\ }]{e2}}
			\address[C]{School of Mathematics, Tata Institute of Fundamental Research,\printead[presep={,\ }]{e3}}
		\end{aug}
		\date{\today}
		
		\begin{abstract}
			We prove a central limit theorem for the Horvitz--Thompson estimator based on the Gram--Schmidt Walk (GSW) design, recently developed in~\cite{harshaw2019balancing}. In particular, we consider the version of the GSW design, which uses {\em randomized pivot order}, thereby  answering an  open question raised in the same article. We deduce this under minimal and global assumptions involving {\em only} the problem parameters, such as the (sum) potential outcome vector and the covariate matrix. As an interesting consequence of our analysis, we also obtain the precise limiting variance of the estimator in terms of these parameters, which is {\em smaller} than the previously known upper bound. The main ingredients are a simplified {\em skeletal} process approximating the GSW design and concentration phenomena for random matrices obtained from random sampling using Stein's method for exchangeable pairs.
		\end{abstract}
		
		\begin{keyword}[class=MSC2020]
			\kwd[Primary: ]{60F05}
			\kwd[, ]{62K99}
			\kwd[; Secondary: ]{60G42}
			\kwd[, ]{62E20}
		\end{keyword}
		
		\begin{keyword}
			\kwd{Central limit theorem, causal inference, experimental design, discrepancy theory, exchangeable pairs.}
		\end{keyword}
		\maketitle
	\end{frontmatter}
	\tableofcontents

	\section{Introduction}\label{sec:intro}
	We are interested in the statistical problem of estimating the \textit{average treatment effect} (ATE).
	This is one of the canonical problems in causal inference, and provides valuable insights into the effectiveness or impact of a particular treatment or intervention. The setup is the following.
	
	Suppose there are $n$ units or individuals with each individual $i$ having two \textit{potential outcomes} $a_i$ and $b_i$ corresponding to two possible treatments. We can think of $a_i$ and
	$b_i$ as the responses of the individual $i$ that we would have observed if we had administered treatment $A$ or $B$ (respectively) to that individual. We will denote the ATE by $\tau$, defined as
	\begin{equation}\label{eq:ht}
		\tau = \frac{1}{n} \sum_{i = 1}^{n} (a_i - b_i).
	\end{equation}
	Let us set up some notations to be used throughout. Let $\mva,\mvb \in \R^n$ denote the two 
	potential outcome vectors and $\mvmu \coloneqq \mva + \mvb$ be the {\em sum potential outcome 
		vector}.

	\subsection{Horvitz--Thompson Estimator}
	One of the classical methods to estimate the ATE is the {\em Horvitz--Thompson estimator}. 
	Let $z_i = \pm1$ denote whether treatment $A$ or $B$ is administered. The vector $\mvz = (z_1,\dots,z_n) \in \{\pm 1\}^{n}$ is called the {\em design vector}. It then just estimates the ATE by 
	taking the difference of the empirical means of the observed potential outcomes, namely,
	\begin{equation*}
		\hat{\tau} = \frac{1}{n} \left(\sum_{i: z_i = +1} \frac{a_i}{\P[z_i = 1]} -  \sum_{i: z_i = -1} \frac{b_i}{\P[z_i = -1]}\right).
	\end{equation*}
	It is well known (see~\cite{imbens2015causal}) that 
	$\hat \tau$ is unbiased
	if $\P[z_i = 1] \in (0, 1)$ for all units $i \in [n]$. In this paper, we consider designs where each unit is equally likely to receive either treatment, \ie, $\P[z_i = 1] = \P[z_i = - 1] = \tfrac12$. The 
	distribution of $\hat{\tau}$ clearly depends on the design $\mvz$. One choice for $\mvz$ is the i.i.d. 
	design where the variables $z_i$'s i.i.d.~Rademacher variables. It is not hard to check (see \eg~\cite[Lemma~1.2]{harshaw2019balancing}) that the Horvitz--Thompson estimator $\hat{\tau}_{{\rm 
			Rad}}$ based on the i.i.d. design has variance given by
	\begin{equation}\label{eq:htvar}
		\E [\hat{\tau}_{{\rm Rad}} - \tau]^2  = \var [\hat{\tau}_{{\rm Rad}}] = \frac{1}{n^2} \|\mvmu\|^2.
	\end{equation}

	The setting that we are interested in here is when in addition to our observation of one of the potential 
	outcomes $a_i$ or $b_i$ for the $i$-th unit, we also observe a covariate vector $\mvx_i \in \R^d$, for 
	each unit $i \in [n].$ A natural question that arises at this point is 
	\begin{center}
		\textit{whether it is possible to use the information in the covariates to improve the Horvitz--Thompson Estimator?} 
	\end{center}

	\subsection{Gram--Schmidt Walk Design}
	Recently,~\cite{harshaw2019balancing} proposed a solution to the above-posed question by 
	using the Gram--Schmidt Walk algorithm from~\cite{bansal2018gram} to change the sampling 
	strategy of the design vector $\mvz$. This algorithm is reviewed in Section~\ref{sec:algo}. The authors obtain a random vector $\mvz \in \{\pm 1\}^n$ (final output of the so-called Gram--Schmidt walk), which no longer consists of i.i.d.~Rademacher entries. However, they consider the same exact form of the Horvitz--Thompson estimator as in~\eqref{eq:ht}. They show the following mean squared error bound for their estimator $\hat{\tau}_{{\gsw}}$.

	\begin{theorem}\rm{\cite[Theorem~4.1]{harshaw2019balancing}}\label{thm:msebnd}
		Let $X$ be an $n \times d$ covariate matrix with rows $\{\mvx_i\}_{i = 1}^{n}$. Let $\xi := \max_{i \in 
			[n]} \norm{\mvx_i}$ and $\phi \in (0,1)$ be an algorithm parameter, called the robustness parameter, fixed 
		beforehand. Then, for the Horvitz--Thompson estimator based on the GSW design with parameter 
		$\phi$, we have the following upper bound on the mean squared error:
		\begin{equation}\label{eq:gs}
			n\E [\hat{\tau}_{{\gsw}} - \tau]^2 \leq \inf_{\mvgb \in \R^d} \left(\frac{1}{\phi n} \|\mvmu - X \mvgb\|^2 + 
			\frac{\xi^2 \|\mvgb\|^2}{(1 - \phi) n} \right).
		\end{equation}
	\end{theorem}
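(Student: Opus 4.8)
The plan is to reduce the mean-squared-error estimate to a Loewner-order bound on the covariance matrix $\E[\mvz\mvz^{\t}]$ of the design vector, and then to establish that bound by a backward induction along the steps of the Gram--Schmidt walk, using the minimum-norm property of its step directions.

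\textbf{Reduction.} Because $\P[z_i=1]=\P[z_i=-1]=\tfrac12$, writing $\mathbf{1}[z_i=1]=(1+z_i)/2$ gives at once $\hat{\tau}_{\gsw}-\tau=\tfrac1n\langle\mvz,\mvmu\rangle$; as the walk starts at the origin and is a martingale, $\E[\mvz]=0$, so $n\,\E[\hat{\tau}_{\gsw}-\tau]^2=\tfrac1n\,\mvmu^{\t}\E[\mvz\mvz^{\t}]\mvmu$. For the right-hand side of~\eqref{eq:gs}, let $\vB$ be the $(n+d)\times n$ matrix with columns $\vb_i=(\sqrt{\phi}\,e_i,\ \tfrac{\sqrt{1-\phi}}{\xi}\mvx_i)\in\R^n\times\R^d$, so that $\vB^{\t}\vB=\phi I_n+\tfrac{1-\phi}{\xi^2}XX^{\t}$ and, crucially, $\|\vb_i\|^2=\phi+\tfrac{1-\phi}{\xi^2}\|\mvx_i\|^2\le 1$ by the definition of $\xi$. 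For $\mvy=(\mvy_1,\mvy_2)\in\R^n\times\R^d$ the equation $\vB^{\t}\mvy=\mvmu$ reads $\sqrt{\phi}\,\mvy_1+\tfrac{\sqrt{1-\phi}}{\xi}X\mvy_2=\mvmu$, and parametrizing its solutions by $\mvgb:=\tfrac{\sqrt{1-\phi}}{\xi}\mvy_2$ turns $\|\mvy\|^2$ into exactly $\tfrac1\phi\|\mvmu-X\mvgb\|^2+\tfrac{\xi^2}{1-\phi}\|\mvgb\|^2$. Hence the bracketed expression in~\eqref{eq:gs} equals $\tfrac1n\|\mvy\|^2$, and its infimum over $\mvgb$ equals $\tfrac1n$ times the squared norm of the minimum-norm solution of $\vB^{\t}\mvy=\mvmu$, i.e.\ $\tfrac1n\mvmu^{\t}(\vB^{\t}\vB)^{-1}\mvmu$ (here $\vB$ has full column rank $n$). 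Comparing with the left-hand side, \eqref{eq:gs} is \emph{equivalent} to the Loewner bound $\E[\mvz\mvz^{\t}]\preceq(\vB^{\t}\vB)^{-1}$, which is what I would prove.

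\textbf{Induction along the walk.} Set $Q:=\vB^{\t}\vB$, and for $\mathcal{A}\subseteq[n]$ let $Q_{\mathcal{A}}$ be its $\mathcal{A}\times\mathcal{A}$ principal submatrix and $\widehat{Q_{\mathcal{A}}^{-1}}$ the zero-extension of $Q_{\mathcal{A}}^{-1}$ to an $n\times n$ matrix. Recall from Section~\ref{sec:algo} that at step $t$ the walk has an active set $\mathcal{A}_t$ (with $\mathcal{A}_1=[n]$), chooses a pivot $p_t\in\mathcal{A}_t$, and moves $\mvz^{(t)}=\mvz^{(t-1)}+\delta_t\mvu_t$, where $\mvu_t$ minimizes $\|\vB\mvu\|$ over $\mvu$ supported on $\mathcal{A}_t$ with $(\mvu)_{p_t}=1$ --- hence $\mvu_t$ is the zero-extension of $Q_{\mathcal{A}_t}^{-1}e_{p_t}/(Q_{\mathcal{A}_t}^{-1})_{p_tp_t}$ --- and $\delta_t\in\{\delta_t^+,\delta_t^-\}$ with $\delta_t^+>0>\delta_t^-$, chosen with the centering probabilities so that $\mvz^{(t)}\in[-1,1]^n$ with at least one new coordinate frozen; let $\mathcal{F}_t$ denote the $\sigma$-field generated by the walk through step $t$. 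In particular $\E[\delta_t^2\mid\mathcal{F}_{t-1},p_t]=\delta_t^+|\delta_t^-|\le 1-(z^{(t-1)}_{p_t})^2\le 1$, the constraint coming from coordinate $p_t$ where $(\mvu_t)_{p_t}=1$. The walk ends in at most $n$ steps. I would show by downward induction on $t$ that the conditional covariance of the remaining walk, $\Sigma_t:=\E[(\mvz-\mvz^{(t)})(\mvz-\mvz^{(t)})^{\t}\mid\mathcal{F}_t]$, satisfies $\Sigma_t\preceq\widehat{Q_{\mathcal{A}_{t+1}}^{-1}}$; the base case (terminated walk, empty active set) is trivial, and $t=0$ gives $\E[\mvz\mvz^{\t}]=\Sigma_0\preceq Q^{-1}$. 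Splitting off the $t$-th displacement by the martingale property gives $\Sigma_{t-1}=\E[\delta_t^2\mvu_t\mvu_t^{\t}\mid\mathcal{F}_{t-1}]+\E[\Sigma_t\mid\mathcal{F}_{t-1}]$, so the inductive step closes once one has, conditionally on $(\mathcal{F}_{t-1},p_t)$ (which fixes $\mvu_t$), the per-step matrix inequality
\[
\E[\delta_t^2\mid\mathcal{F}_{t-1},p_t]\,\mvu_t\mvu_t^{\t}\ \preceq\ \widehat{Q_{\mathcal{A}_t}^{-1}}-\E\bigl[\widehat{Q_{\mathcal{A}_{t+1}}^{-1}}\bigm|\mathcal{F}_{t-1},p_t\bigr],
\]
since averaging this over $p_t$ (and using that $\widehat{Q_{\mathcal{A}_t}^{-1}}$ is $\mathcal{F}_{t-1}$-measurable) together with the inductive hypothesis $\Sigma_t\preceq\widehat{Q_{\mathcal{A}_{t+1}}^{-1}}$ yields $\Sigma_{t-1}\preceq\widehat{Q_{\mathcal{A}_t}^{-1}}$. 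By the block-inversion (Schur complement) identity, the right-hand side above equals $\E\bigl[(Q_{\mathcal{A}_t}^{-1}e_{q_t})(Q_{\mathcal{A}_t}^{-1}e_{q_t})^{\t}/(Q_{\mathcal{A}_t}^{-1})_{q_tq_t}\bigm|\mathcal{F}_{t-1},p_t\bigr]$ (zero-extended), with $q_t$ the coordinate frozen at step $t$.

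\textbf{The main obstacle} is this per-step inequality. Testing it against an arbitrary vector and substituting the formula for $\mvu_t$ and for the right-hand side reduces it to a scalar, Cauchy--Schwarz-type estimate relating the pivot column $Q_{\mathcal{A}_t}^{-1}e_{p_t}$ to the two candidate frozen columns $Q_{\mathcal{A}_t}^{-1}e_{q_t^{\pm}}$ ($q_t^{\pm}$ being the coordinate that freezes when $\delta_t=\delta_t^{\pm}$), with weights determined by $\delta_t^{\pm}$. When the pivot itself freezes in both directions, the inequality collapses --- using $(\mvu_t)_{p_t}=1$ --- to $\delta_t^+|\delta_t^-|\le(Q_{\mathcal{A}_t}^{-1})_{p_tp_t}$, which holds because the left side is $\le 1$ while the right side, a Schur complement of $Q_{\mathcal{A}_t}$, is at least $1/(Q_{\mathcal{A}_t})_{p_tp_t}=1/(\phi+\tfrac{1-\phi}{\xi^2}\|\mvx_{p_t}\|^2)\ge 1$; this is precisely where $\xi=\max_i\|\mvx_i\|$ and $\phi<1$ are used. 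The general case, where the pivot may remain active, is the genuinely delicate step: it calls for a Cauchy--Schwarz estimate in the $Q_{\mathcal{A}_t}$-geometry involving rows $p_t,q_t^+,q_t^-$ of $Q_{\mathcal{A}_t}^{-1}$, together with the elementary identities $\delta_t^+\,|(\mvu_t)_{q_t^+}|=1-\mathrm{sign}\bigl((\mvu_t)_{q_t^+}\bigr)z^{(t-1)}_{q_t^+}\in[0,2]$ and its analogue for $q_t^-$, which record that $q_t^{\pm}$ is the first coordinate to reach $\pm1$. Degenerate steps that freeze several coordinates at once are absorbed by iterating the Schur-complement identity (or by a limiting argument). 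Granting the per-step inequality, the induction yields $\E[\mvz\mvz^{\t}]\preceq(\vB^{\t}\vB)^{-1}$, and the reduction then gives~\eqref{eq:gs}.
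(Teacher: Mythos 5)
This theorem is not proved in the present paper at all --- it is imported from \cite[Theorem~4.1]{harshaw2019balancing} --- so your proposal can only be measured against the argument in that reference. Your reduction is correct and is the same as theirs: $\hat\tau_{\gsw}-\tau=\tfrac1n\langle\mvz,\mvmu\rangle$, $\E[\mvz]=\mv0$, and the infimum on the right of \eqref{eq:gs} equals $\tfrac1n\mvmu^{\t}(\vB^{\t}\vB)^{-1}\mvmu$ by the minimum-norm parametrization, so everything hinges on the covariance bound $\E[\mvz\mvz^{\t}]\le(\vB^{\t}\vB)^{-1}$ in the Loewner order, which is the main covariance theorem of the cited paper and carries all the content. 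It is exactly there that your argument has a genuine gap: the per-step inequality you reduce the backward induction to is not merely ``delicate'', it is false in general. Conditionally on $\cF_{t-1}$ and $p_t$, writing $M=Q_{\cA_t}^{-1}$, your inequality reads (everything zero-extended, inequality in the Loewner order)
\[
\delta_t^{+}\delta_t^{-}\,\frac{(M\mve_{p_t})(M\mve_{p_t})^{\t}}{M_{p_tp_t}^{2}}
\;\le\;
\frac{\delta_t^{-}}{\delta_t^{+}+\delta_t^{-}}\,\frac{(M\mve_{q_t^{+}})(M\mve_{q_t^{+}})^{\t}}{M_{q_t^{+}q_t^{+}}}
+\frac{\delta_t^{+}}{\delta_t^{+}+\delta_t^{-}}\,\frac{(M\mve_{q_t^{-}})(M\mve_{q_t^{-}})^{\t}}{M_{q_t^{-}q_t^{-}}}.
\]
Loewner domination of a nonzero positive semidefinite matrix forces containment of column spaces, so this requires $M\mve_{p_t}\in\mathrm{span}\{M\mve_{q_t^{+}},M\mve_{q_t^{-}}\}$, i.e.\ $p_t\in\{q_t^{+},q_t^{-}\}$ since $M$ is nonsingular. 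But the pivot typically does \emph{not} freeze at a given step: if two non-pivot active coordinates sit near $+1$ and $-1$ while $\mvu_t$ has nonzero entries of suitable sign there (say $\mvz_{t-1}$ equal to $0.95$ and $-0.95$ on those coordinates and $\mvu_t$ about $0.3$ on both, which occurs for correlated covariates), then $q_t^{+}$ and $q_t^{-}$ are both different from $p_t$, the left side has a range component outside the range of the right side, and no Cauchy--Schwarz estimate in any geometry can repair this. Consequently the inductive claim $\Sigma_t\le\widehat{Q^{-1}_{\cA_{t+1}}}$ is unsupported, and with it the covariance bound and \eqref{eq:gs}.

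The failure is structural, and it is precisely why the proof in \cite{harshaw2019balancing} (following \cite{bansal2018gram}) does not proceed by per-step Loewner comparisons over active sets. Their analysis exploits the phase structure of the walk --- a pivot remains the pivot until it is itself frozen --- and bounds the quadratic forms of $\cov(\vB\mvz)$ by the orthogonal projection onto ${\rm ColSp}(\vB)$ through an argument organized over whole phases rather than single steps. The computation you do carry out, namely $\delta_t^{+}\delta_t^{-}\le1\le(Q_{\cA_t}^{-1})_{p_tp_t}$ using $\|\vB\mve_{p_t}\|^{2}=\phi+(1-\phi)\|\mvx_{p_t}\|^{2}/\xi^{2}\le1$, is the correct germ of that argument, but it only accounts for the step at which a pivot is frozen; the substance of the cited proof is the control of the accumulated non-pivot freezes within a phase, which your blueprint does not address.
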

	Suppose we choose $\mvgb =\mvgb_\ls$ such that $X \mvgb_\ls  =  \proj_{{\rm 
			ColSp}(X)}(\mvmu)$, where $\proj_{\colsp(X)}$ is the (orthogonal) projector 
	onto the column space of $X$. The first term on the right-hand side above will then scale like 
	$$\frac{1}{n}\|\mvmu  - \proj_{\colsp(X)}(\mvmu)\|^2$$ which is typically $\Theta(1)$ unless 
	the (sum potential) outcome vector $\mv\mu$ lies ``too close'' to $\colsp(X)$. 
	Also we can expect $\xi^2$ and $\|\mvgb \|^2$ to be $O(d)$ as they are norms of $d$-dimensional 
	vectors. Therefore, the second-term scales 
	like $O({d^2}/{n})$ which, if $d = o(\sqrt{n})$, is a lower order term than the first term. Therefore, in 
	such a regime, the above theorem ensures that the Horvitz--Thompson estimator 
	$\hat{\tau}_{{\gsw}}$ based on the Gram--Schmidt Walk design (setting $\phi$ very close to $1$) 
	satisfies the two qualitative properties when compared to $\hat \tau_{{\rm Rad}}$ 
	(see~\eqref{eq:htvar} above).
	\begin{enumerate}
		\item If the covariates are predictive of the outcome vector $\mvmu $, \ie  $\|\mvmu  - \proj_{\colsp(X)}(\mvmu)\|^2$ is significantly smaller than $\| \mvmu \|^2$ then 
		$\hat{\tau}_{{\gsw}}$ has significantly smaller mean squared error than $\hat{\tau}_{{\rm Rad}}$.
		
		\item However, even if the covariates are not predictive of $\mvmu $, since $\|\mvmu  - 
		\proj_{\colsp(X)}(\mvmu)\|^2$ is bounded by $\|\mv\mu\|^2$ and the second term in the 
		right-hand side of~\eqref{eq:gs} is negligible compared to the first term, mean squared error of $\hat{\tau}_{{\gsw}}$ is never too much greater than the mean squared error of $\hat{\tau}_{{\rm 
				Rad}}.$
	\end{enumerate}
	
	The first property has been termed as \textit{Covariate Balance} and the second property  has been 
	termed as \textit{Robustness} in~\cite{harshaw2019balancing}. In this way, the Horvitz--Thompson 
	estimator using the Gram--Schmidt Walk (GSW) design achieves both covariate balance and robustness. It 
	has also been argued in~\cite{harshaw2019balancing} that the GSW design enjoys 
	certain advantages over other existing design approaches in the causal inference literature such as 
	rerandomization and designs based on matching pairs. Overall, it is probably fair to say that the GSW 
	design, although a recent entrant to the causal inference design toolbox, has already become one of 
	its prominent tools. In the next section, we review the GSW design algorithm; 
	see~\cite{bansal2018gram},~\cite{harshaw2019balancing} for more detailed discussions on the 
	algorithm. 

	\subsection{Review of Gram--Schmidt walk algorithm}\label{sec:algo}
	We briefly describe the Gram--Schmidt random walk design in this section using the so-called {\em randomized pivot ordering}. See Section~3 and Section~A1.1 in \cite{harshaw2019balancing} for 
	further details. Define
	\begin{equation}\label{def:B_intro}
		\vB := \mat{\sqrt{\phi}I_n}{\xi^{-1}\sqrt{1 - \phi}X^{\t}}
	\end{equation}
	where $\xi \coloneqq \max_{i \in [n]}\|\mvx_i\|$. We start with $\cA_1^{{\gsw}} \coloneqq [n]$, $\mv z^{{\gsw}}_{0} = \mv 0 \in \R^n$. The algorithm at round $t \in [n]$. 
	\begin{enumerate}
		\item If $p^{{\gsw}}_{t - 1}  \notin \cA_t$, choose a pivot $p_t^{{\gsw}}$ uniformly at random from $\cA_t^{{\gsw}}$, otherwise set $p_t^{{\gsw}} = p_{t-1}^{{\gsw}}$.
		
		\item Compute a {\em step direction} $\mv u_t^{{\gsw}} \in \R^n$ as
		\begin{equation*}
			\begin{split}
				\mv u_t^{{\gsw}} \leftarrow  \argmin_{\mvu} \: \: \: \: \: \: \:  &\|\vB \mvu\|^2\\
				\mbox{subject to } \: &\mvu[i] = 0 \mbox{ for all }i \notin \cA_t^{{\gsw}}\\
				& \mvu[p_t^{{\gsw}}] = 1.
			\end{split}
		\end{equation*}

		\item Setting $$\Delta \coloneqq \{\delta \in \R: \mv z^{{\gsw}}_{t - 1} + \delta \mv u_t^{{\gsw}} \in [-1,1]^n\},$$
		let $\delta^{+} \coloneqq |\sup \Delta|$ and $\delta^{-} \coloneqq |\inf \Delta|$. Next define $\delta_t^{{\gsw}}$  as follows:
		\begin{equation*}
			\delta_t^{{\gsw}} =
			\begin{cases}
				\delta^{+} \:\:\text{with probability} \:\:\frac{\delta^{-}}{\delta^{+} + \delta^{-}} \\
				\delta^{-} \:\:\text{with probability} \:\:\frac{\delta^{+}}{\delta^{+} + \delta^{-}}
			\end{cases}
		\end{equation*}
		
		\item  	Now update 
		\begin{equation}\label{update:ztgs}
			\mv z_t^{{\gsw}} = \mv z_{t - 1}^{{\gsw}} + \delta_t^{{\gsw}} \mv u_t^{{\gsw}}
		\end{equation}
		and $\cA_{t + 1}^{{\gsw}} = \cA_{t}^{{\gsw}}\setminus \{i \in [n] : |\mv 
		z_t^{{\gsw}}[i]| < 1 \}$.
		
		\item Increment the index $t \leftarrow t + 1$ and go to 1, unless $t = n$.
	\end{enumerate}


	\subsection{Central Limit Theorem}
	Apart from estimating the ATE, it is also of interest to give confidence intervals for the ATE. Towards 
	this end,~\cite{harshaw2019balancing} also studied the distributional properties of $\hat{\tau}_{\gsw} 
	- \tau.$ In particular, they showed that $\hat{\tau}_{\gsw} - \tau$ is subgaussian-$\sigma$ random 
	variable where $\sigma$ is given by the right-hand side in~\eqref{eq:gs}. As shown in~\cite{harshaw2019balancing}, 
	this fact allows one to construct confidence intervals valid for any sample size. Moreover, the authors also derived a central limit theorem (CLT) for $\hat{\tau}_{\gsw}$ and showed how this theorem 
	can be used (in the large sample case) to construct confidence intervals that are narrower than 
	those based on concentration inequalities for a subgaussian random variable. The CLT 
	in~\cite{harshaw2019balancing} was proved for the version of the Gram--Schmidt Walk design which 
	uses a {\em fixed} ordering of the pivots. It is based on somewhat strong assumptions on the matrix 
	$X$ along with the requirement that $n^2\var[\hat\tau_{\gsw}]$, \ie  the (scaled) variance of the 
	estimator {\em itself}, be at least $cn$ (as in the i.i.d. case) which is {\em unknown} a priori in relation 
	to the {\em parameters} $\mv\mu$ and $X$. The authors there conjectured, see 
	\cite[Section~6.3]{harshaw2019balancing}, that the CLT should also hold for the version which uses 
	{\em randomized pivot ordering}. The main focus of our article is to prove this conjecture under 
	minimal and global assumptions involving {\em only} the design matrix $X$ and the outcome vector 
	$\mv\mu$.
	

	\subsection{The main results}
	Recall that the covariate matrix $X$ and the outcome vector $\mv\mu$ both naturally depend on the 
	number of units $n$. Similarly the number of covariates $d$ and the robustness parameter $\phi$ may 
	also depend on $n$. In the sequel, we let
	\begin{equation}\label{def:v}
		\mvmu = X \mvgb_{\ls} + \mvv \mbox{ with } \mvv^\t X = \mv0,
	\end{equation}
	\ie\ $\mvv$ is the orthogonal projection of $\mv\mu$ onto ${\rm 
		ColSp}(X)^{\perp}$. We are now ready to state the three 
	regularity conditions 
	which we will use to show the asymptotic normality. These are ``simplified'' versions of much more 
	{\em general} assumptions under which we can derive the CLT in Theorem~\ref{thm:formal} below.
	\begin{assumption}[Outcome regularity]\label{assumption:outcome-regularity}
		$\frac{\|\mv v\|_{\infty}}{\|\mv v\|} \le \frac{C}{\log^{3 + c}(n)}$ whereas $\|\mvgb_{{\rm ls}}\|^2 \le C \log n$ for some positive constants $c$ and $C$. 
	\end{assumption}
	\begin{assumption}[Covariate regularity]\label{assumption:covariate-regularity}
		The smallest singular value of the covariate matrix $X$ satisfies 
		$\sigma_{{\rm min}}(X) \ge \sqrt{cn}$ for some positive constant $c$. Also the maximum row norm of the covariate matrix, $\xi = \max_{i \in [n]}\|\mvx_i\|$, 
		is bounded as $\xi^2 \le C d \log n$ for some positive contant $C$.
	\end{assumption}
	\begin{assumption}[Non-degeneracy]\label{assumption:nondegeneracy} 
		$\|\mvv\|^2 \ge \log^{2 + c}(n)$ for some positive constant $c$.
	\end{assumption}

	A detailed discussion of the necessity and minimality of these assumptions 
	in comparison to their counterparts in \cite{harshaw2019balancing} is provided in 
	the next subsection.
	\begin{theorem}\label{thm:main}
		Suppose that Assumptions~\ref{assumption:outcome-regularity}--\ref{assumption:nondegeneracy} hold. Also, suppose that $d$ is fixed and $\phi$ is bounded away from $0$ and $1$. Then the limiting distribution of the Horvitz--Thompson
		estimator $\hat\tau_{\gsw}$ under the Gram--Schmidt Walk design is normal:
		\begin{equation}
			\frac{\hat\tau_{\gsw} - \tau}{\sqrt{\var[\hat\tau_{\gsw}]}}  \xrightarrow[n 
			\to \infty]{{\rm law}} N(0, 1).
		\end{equation}
		Furthermore, in this case, we have the following asymptotic formula for the 
		variance:
		\begin{equation}\label{eq:variance_formula}
			\lim_{n \to \infty}\frac{\var[\hat\tau_{\gsw}]}{\|\mvv\|^2/n^2} = 1    
		\end{equation}
		where $\mvv = \proj_{\colsp(X)^{\perp}}(\mv\mu)$ is the orthogonal 
		projection of the outcome vector $\mv\mu$ onto $\colsp(X)^{\perp}$.
	\end{theorem}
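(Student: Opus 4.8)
The plan is to reduce the problem to the component $\mvv$ of $\mv\mu$ orthogonal to the covariates, to run a martingale CLT along that component, and to control everything by comparison with a simple ``skeletal'' process whose relevant linear functional is an i.i.d.\ Rademacher sum. Since $\P[z_i=1]=\tfrac12$ one has $\hat\tau_{\gsw}-\tau=\tfrac1n\langle\mvz^{\gsw},\mv\mu\rangle$, so with $\mv\mu=X\mvgb_{\ls}+\mvv$ from \eqref{def:v} write
\[
W_n\coloneqq n(\hat\tau_{\gsw}-\tau)=\langle X^\t\mvz^{\gsw},\mvgb_{\ls}\rangle+\langle\mvz^{\gsw},\mvv\rangle,
\]
and denote the two summands by $A_n$ and $B_n$. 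Applying Theorem~\ref{thm:msebnd} to the (design-independent) outcome vector $X\mvgb_{\ls}$, whose orthogonal part vanishes, gives $\var[A_n]=\var[\langle\mvz^{\gsw},X\mvgb_{\ls}\rangle]\le\xi^2\|\mvgb_{\ls}\|^2/(1-\phi)=O(\log^2 n)$ by Assumptions~\ref{assumption:outcome-regularity}--\ref{assumption:covariate-regularity}; since $\|\mvv\|^2\ge\log^{2+c}n$ by Assumption~\ref{assumption:nondegeneracy}, we get $A_n/\|\mvv\|\to0$ in $L^2$ and $\mathrm{Cov}(A_n,B_n)=o(\|\mvv\|^2)$ by Cauchy--Schwarz. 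Hence it suffices to show $B_n/\|\mvv\|\xrightarrow{\rm law}N(0,1)$ and $\var[B_n]=(1+o(1))\|\mvv\|^2$: the latter gives $\var[W_n]=(1+o(1))\|\mvv\|^2$, which is \eqref{eq:variance_formula} since $\var[W_n]=n^2\var[\hat\tau_{\gsw}]$, and two applications of Slutsky then yield the CLT.

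For $B_n$, let $\mathcal F_t$ be the $\sigma$-field generated by the first $t$ rounds. Since $\E[\delta_t^{\gsw}\mid\mathcal F_{t-1},p_t^{\gsw}]=0$, the walk $\mvz_t^{\gsw}$ is a vector martingale, so $B_t=\langle\mvz_t^{\gsw},\mvv\rangle$ is a scalar martingale with increments $D_t=\delta_t^{\gsw}\langle\mvu_t^{\gsw},\mvv\rangle$. I would apply the martingale CLT, for which it suffices that (i) $V_n\coloneqq\sum_t\E[D_t^2\mid\mathcal F_{t-1}]\to\|\mvv\|^2$ in probability and (ii) $\max_t|D_t|=o(\|\mvv\|)$ (conditional Lindeberg). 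Solving the quadratic program defining $\mvu_t^{\gsw}$ explicitly, using $\vB^\t\vB=\phi I+\xi^{-2}(1-\phi)XX^\t$ and the Woodbury identity, gives on $\cA_t$
\[
\langle\mvu_t^{\gsw},\mvv\rangle=\frac{v_{p_t}-\mvx_{p_t}^\t N_t^{-1}(X_{\cA_t}^\t\mvv_{\cA_t})}{1-\mvx_{p_t}^\t N_t^{-1}\mvx_{p_t}},\qquad N_t\coloneqq\frac{\phi\xi^2}{1-\phi}I_d+\sum_{j\in\cA_t}\mvx_j\mvx_j^\t,
\]
where $X_{\cA_t}$ collects the rows $\{\mvx_j\}_{j\in\cA_t}$ and $\mvv_{\cA_t}$ the corresponding subvector. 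Since $\mvv^\t X=\mv0$ we have $X_{\cA_t}^\t\mvv_{\cA_t}=-\sum_{j\notin\cA_t}v_j\mvx_j$, which is small; with $\sigma_{\min}(X)^2\ge cn$ and $\xi^2\le Cd\log n$ the denominator is $1-o(1)$, and averaging the numerator error over the uniformly random pivot $p_t$ gains an extra factor $|\cA_t|^{-1}$. Consequently $D_t=\delta_t^{\gsw}v_{p_t}+\widetilde E_t$, and because $\sum_t|\cA_t|^{-1}=O(\log n)$ the martingale $\sum_t\widetilde E_t$ has variance $o(\|\mvv\|^2)$. Thus $B_n$ and $V_n$ coincide, up to $o(\|\mvv\|^2)$, with those of the \emph{skeletal process} in which each round simply moves its pivot coordinate directly to $\pm1$ --- which along the $\mvv$-direction is exactly the i.i.d.\ Rademacher design.

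For the skeletal process each coordinate freezes the first round it is the pivot, the pivot sequence is a uniform random permutation of $[n]$, $|\cA_t|=n-t+1$, and $V_n^{\mathrm{sk}}=\sum_t|\cA_t|^{-1}\sum_{p\in\cA_t}v_p^2=\sum_i v_i^2 Y_i$ with $Y_i=H_n-H_{n-T_i}$, where $T_i$ is the (uniform on $[n]$) freezing round of $i$ and $H_k=\sum_{j\le k}1/j$. A short computation gives $\E Y_i=1$, $\var(Y_i)=O(\log^2 n)$, and $\mathrm{Cov}(Y_i,Y_j)=-\var(Y_1)/(n-1)$ for $i\ne j$ (sampling without replacement), so $\var(V_n^{\mathrm{sk}})\le\var(Y_1)\sum_i v_i^4=O(\log^2 n)\,\|\mvv\|_\infty^2\|\mvv\|^2=o(\|\mvv\|^4)$ by Assumption~\ref{assumption:outcome-regularity}. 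Hence $V_n^{\mathrm{sk}}/\|\mvv\|^2\to1$, and transferring through the skeletal approximation gives $V_n/\|\mvv\|^2\to1$ and $\var[B_n]\to\|\mvv\|^2$; condition (ii) holds since $|D_t|\le|v_{p_t}|+|\widetilde E_t|=o(\|\mvv\|)$, again by Assumption~\ref{assumption:outcome-regularity}. The martingale CLT then gives $B_n/\|\mvv\|\xrightarrow{\rm law}N(0,1)$, which completes the proof.

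I expect the main obstacle to be making the skeletal comparison rigorous: the active sets $\cA_t$ are generated by a feedback loop through $X$, yet the argument needs them to behave like uniformly random subsets --- concretely, $\sigma_{\min}\!\big(\sum_{j\in\cA_t}\mvx_j\mvx_j^\t\big)\gtrsim|\cA_t|$ and $\big\|\sum_{j\in\cA_t}v_j\mvx_j\big\|^2\lesssim|\cA_t|\,\xi^2\|\mvv\|_\infty^2$, \emph{uniformly over all $n$ rounds}. This is precisely where the randomized pivot order is essential, and the natural tool is Stein's method for exchangeable pairs applied to matrix functionals of the sampled rows. Finally, keeping the accumulated per-round errors summable (the $|\cA_t|^{-1}$ gain above) and absorbing the last $O(\log^{3+c}n)$ rounds --- where no such concentration is available --- via the crude bound $|D_t|\le\phi^{-1/2}\|\mvv_{\cA_t}\|$ is what dictates the cubic-logarithm slack in Assumption~\ref{assumption:outcome-regularity}.
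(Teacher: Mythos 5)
Your plan is correct and matches the paper's architecture in every essential respect: the same split $\langle\mvz^{\gsw},\mv\mu\rangle=\langle\mvz^{\gsw},X\mvgb_\ls\rangle+\langle\mvz^{\gsw},\mvv\rangle$ with the cross term killed by Theorem~\ref{thm:msebnd} and the variance lower bound from Assumption~\ref{assumption:nondegeneracy}; the same martingale CLT along $\mvv$ with $\mathcal F_{t-1}$-conditional variance $\sum_t|\cA_t|^{-1}\|\mvv[\cA_t]\|^2$ driving the harmonic-number computation (cf.\ the paper's $T_n$, Lemma~\ref{lem:idealclt}); the same use of Stein's method for exchangeable pairs to control $\sum_{j\in\cA_t}\mvx_j\mvx_j^\t$ (Proposition~\ref{prop:lambdaminconc}); and the same reason why the tail of the walk and the sampling-without-replacement moment bounds (Proposition~\ref{thm:srswor_moment}) force the cubic-logarithm gap in Assumption~\ref{assumption:outcome-regularity}. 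The two small differences are cosmetic. First, the paper routes through the more general Theorem~\ref{thm:formal}, which isolates the single condition $d^{1/2}(\|\mvv\|_\infty/\|\mvv\|)\kappa^2\log n\to0$; you prove essentially that statement inline. Second, the paper's skeletal martingale is $M_t=\sum_{s\le t}\eta_s\langle B\mvu_s/\|B\mvu_s\|,(\mvv,0)\rangle$ rather than your coordinate-flip process $\sum_s\pm v_{p_s}$: the normalization by $\|B\mvu_s\|$ makes $\{B\mvu_s/\|B\mvu_s\|\}$ an exact ONB of ${\rm ColSp}(B)$, so $\sum_t\mathbb E[\Delta_t^2]=\|\mvv\|^2$ is Parseval rather than an approximation. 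Your version has the same leading behaviour (since $\langle\mvu_t,\mvv\rangle=\|B\mvu_t\|^2 v_{p_t}+\text{error}$ and $1\le\|B\mvu_t\|^2\le\Cr{C:xi}$), but you'd need to absorb the $\|B\mvu_t\|^2$ factors into $\widetilde E_t$, which the paper handles more cleanly via the identities $\eqref{eq:identity2}$--$\eqref{eq:identity5}$. Neither difference affects correctness.
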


	\begin{remark}\label{rem:mainres}
		In Theorem~\ref{thm:main}, we assume conditions~\ref{assumption:outcome-regularity}--\ref{assumption:nondegeneracy} to keep the exposition simpler. One can prove a CLT under other assumptions as long as the error term converges to zero. Also, one can apply a similar idea for the case when $\P[z_i = 1],\P[z_i = - 1] \in (\eps,1-\eps)$ for some $\eps\in(0,1)$ fixed.
	\end{remark}
	
	\subsection{Main contributions and comparisons with literature} \label{subsec:discuss}
	In this Section, we briefly discuss the major contributions of the article.
	\begin{enumerate}
		\item \textbf{Generality of assumptions on $\mv\mu$ and $X$.} We compare our assumptions vis-\`{a}-vis the assumptions made 
		in~\cite[Section~6.3]{harshaw2019balancing} facilitating the CLT.
		\begin{enumerate}
			\item {\em Outcome Regularity.} The analogous assumption in \cite{harshaw2019balancing}, namely 
			Assumption~6.4, is expressed in terms of the $\ell^\infty$-norm of the outcome vector $\mv\mu$:
			\[\|\mv\mu\|_\infty = O(\log^c(n)) .\]
			If we assume that $\|\mv\mu\|^2 \ge cn$ in the ``typical'' scenario, we can reinterpret this condition as
			\begin{equation}\label{bnd:mu}
				\frac{\|\mv \mu\|_{\infty}}{\|\mv\mu\|} = O\left(\frac{\log^{c}(n)}{\sqrt{n}}\right).	
			\end{equation}
			On the other hand, we formulate our condition in terms of the ``residual'' outcome vector $\mvv = {\rm 
				Proj}_{\colsp(X)^{\perp}}(\mv\mu)$ which turns out to be the right object to look at based on our 
			analysis. Note however that while the covariate matrix $X$ is known to the experimenter prior to 
			the treatment assignment, $\mv\mu$ and hence $\mvv$ is unavailable to her. Compared to 
			\eqref{bnd:mu}, our assumption on $\mvv$ as a vector satisfying 
			\[\frac{\|\mv v\|_{\infty}}{\|\mv v\|} = O\left(\frac{1}{\log^{3 + c}(n)}\right)\] is significantly weaker. This 
			outcome regularity assumption essentially puts a density constraint on $\mvv$; \ie  $\mvv$ cannot be too sparse. The assumption in \cite{harshaw2019balancing} interpreted as 
			\eqref{bnd:mu} above basically says that $\mv\mu$ should have {\em effectively} 
			$\Omega\left(\tfrac{n}{\log^c(n)}\right)$ many non-zero entries of roughly equal size. In comparison, 
			our assumption allows $\mvv$ which is very sparse, in the sense that it can be supported on 
			$\log^c(n)$ many elements of roughly equal size.  It is not difficult to see that the effective support of 
			$\mv\mu$ or $\mvv$ should diverge to $\infty$ at some rate for any CLT to hold. Our assumption on 
			$\mv\beta_{\ls}$ is 
			very mild and can in fact be easily deduced from conditions like $\|\mv\mu\|_{\infty}^2 = O(\log n)$ and 
			Assumption~\ref{assumption:covariate-regularity} on $X$.

			\item  {\em Covariate Regularity.} Assumption~6.5 in \cite{harshaw2019balancing} stipulates that 
			$\sigma_{\min}(X_m) \ge \sqrt{cm}$ for {all} $m \ge n^{1/2 - \epsilon}$ where $X_m$ is the 
			submatrix of dimensions $m \times d $ given by the first $m$ rows of $X$ after ordering them 
			according to the ordering of pivots. In our understanding, this assumption is necessary essentially because they work with a fixed pivot ordering. Since we work with the {\em randomized pivot 
				ordering}, this randomization actually allows us to only require a {\em global} condition on the full 
			covariate matrix instead of its many submatrices. We believe that 
			our Assumption~\ref{assumption:covariate-regularity} on the covariate matrix $X$ is both general and 
			minimal. Assumption~6.5 in \cite{harshaw2019balancing} also requires a logarithmic upper bound on 
			the so-called {\em incoherence} of the submatrix $X_m$ for all $m \ge n^{1/2 - \ep}$ which we {\em 
				do not need at all}. Finally, our assumption that the maximum squared $\ell^2$-norm of $\mvx_i$'s 
			($\in \R^d$) is at most $Cd\log n$ is also present in \cite[Assumption~6.5]{harshaw2019balancing} 
			and is motivated by similar reasons.

			\item {\em Non-degeneracy.} The non-degeneracy criterion presented 
			by~\cite[Assumption~6.6]{harshaw2019balancing} is the blanket assumption that $n^2\var[\hat{\tau}_{\gsw}]$ is at least $cn$ like in the case of i.i.d. design. In particular, it is not clear what this assumption means in terms of the problem parameters $\mv \mu$ and $X$. In comparison, 
			our analysis yields that $n^2\var[\hat{\tau}_{\gsw}]$ is {\em asymptotically equivalent} to $\|\mvv\|^2$ 
			\eqref{eq:variance_formula}. Consequently, our Assumption~\ref{assumption:nondegeneracy} 
			suggests a non-degeneracy condition involving solely the super-logarithmic divergence of 
			$\|\mvv\|^2$ which is a known function of the problem parameters $\mv\mu$ and $X$. Also since 
			$n^2\var[\hat{\tau}_{\gsw}]$ is asymptotically equivalent to $\|\mvv\|^2$, our condition is {\em significantly weaker} compared to \cite[Assumption~6.6]{harshaw2019balancing} as it replaces the 
			linear lower bound in the latter with only a poly-logarithmic bound.
		\end{enumerate}
		
		\smallskip
		
		\item \textbf{A precise and smaller asymptotic variance.}
		A very important byproduct of our analysis is that we obtain an asymptotically {\em exact} variance control for the estimator $\hat\tau_{\gsw}$ as given by \eqref{eq:variance_formula}. 
		It is not difficult to see that (see for instance the brief discussion following Theorem~\ref{thm:msebnd})
		the best possible upper bound on $n^2\var[\hat\tau_{\gsw}]$ allowed by Theorem~\ref{thm:msebnd} 
		is $\frac1{\phi}\|\mvv\|^2$. Since $\phi \in (0, 1)$, our formula \eqref{eq:variance_formula} shows that 
		the {\em true} variance is {\em smaller} by the factor $\phi$ and as such is {\em independent} of the 
		robustness parameter $\phi$. This is particularly beneficial for statisical applications. 
		
		\smallskip
		
		\item \textbf{Power of randomization.} As already mentioned, we prove our CLT for the GSW design 
		with randomized pivot ordering as opposed to the fixed pivot ordering in \cite{harshaw2019balancing} 
		where it was posed as a conjecture. This randomization plays a crucial role in our analysis and 
		enables us to derive our result under such minimal conditions as discussed above. We harness its 
		strength in this paper in three different (but related) ways. Firstly, randomization lies at the heart of our 
		definition of the {\em skeletal} process which carries most of our analysis and yields the correct 
		formula for the asymptotic variance in \eqref{eq:variance_formula} (see Section~\ref{sec:martingales} 
		below). Secondly, we use it to take advantage of the concentration phenomenon for the sum of a random 
		sample without replacement of size $k$ out of $n$ unit-rank matrices when $1\ll k\ll n$. We use the 
		Stein's method for exchangeable pairs for this purpose (see Proposition~\ref{prop:lambdaminconc}). 
		Finally, we use some classical moments formulae for the sample mean of a random sample without 
		replacement (see Proposition~\ref{thm:srswor_moment}) propelled by the need to exploit the 
		orthogonality between $\mvv$ and $\colsp(X)$. All of these point towards the fact that the 
		randomized pivot ordering is perhaps the ``natural'' setup to perform the GSW design.


	\end{enumerate}
	
	\subsection{Proof sketch}\label{subsec:sketch}
	We deduce Theorem~\ref{thm:main} as a special instance of a more general result. Below we let $Y$ 
	denote the matrix $\xi^{-1}\textstyle\frac{\sqrt{1 - \phi}}{\sqrt{\phi}}\, X$ (the motivation for choosing 
	this matrix is given below \eqref{def:B} in the next section).
	\begin{theorem}\label{thm:formal}
		%
		%
		Let $\phi \in (0, 1)$ be bounded away from $0$ and $d \le n$. Also let $$\kappa \coloneqq \tfrac{n}{\sigma_{\min}(Y)} = 
		\tfrac{n}{\lambda_{\min}(Y^{\t}Y)}$$ 
		($\lambda_{\min}(\cdot)$ being the smallest eigenvalue) and 
		$\mvv \in \R^n$ be any vector satisfying $\mvv^{\t}X = 0$, \ie  $\mvv$ is orthogonal to $\colsp(X)$. 
		Then we have, with $\mvz_n^{\gsw}$ as given by \eqref{update:ztgs},
		\begin{equation}\label{eq:formal_clt}
			\frac{\langle \mvz_n^{{\gsw}}, \mvv \rangle}{\sqrt{\var\langle \mvz_n^{{\gsw}}, \mvv \rangle}}  \xrightarrow[n \to \infty]{{\rm law}} N(0, 1) 
		\end{equation}
		\text{ provided }
		\begin{equation}\label{eq:formal_cond}
			\lim_{n \to \infty} d^{1/2} \, \frac{\|\mv v\|_{\infty}}{\|\mv v\|}\,  \kappa^2 \log n = 0. 
		\end{equation}
		Furthermore, in this case, we have the following asymptotic formula for the variance:
		\begin{equation}\label{formula:var}
			\lim_{n \to \infty} \frac{\var\langle \mvz_n^{{\gsw}}, \mvv \rangle}{\|\mvv\|^2} = 1.
		\end{equation}
		%
		%
		%
		%
			%
			%
			%
			%
			%
			%
			%
			%
			%
			%
			%
			%
			%
	\end{theorem}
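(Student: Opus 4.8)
The plan is to express $\langle \mvz_n^{\gsw}, \mvv\rangle$ as a martingale with respect to the filtration generated by the GSW algorithm's randomness (the pivot choices and the sign choices $\delta_t^{\gsw}$) and then apply a martingale CLT. The key observation is that $\langle \mvz_t^{\gsw}, \mvv\rangle - \langle \mvz_{t-1}^{\gsw}, \mvv\rangle = \delta_t^{\gsw}\langle \mvu_t^{\gsw}, \mvv\rangle$, and by the choice of the step size distribution in Step~3 of the algorithm, $\E[\delta_t^{\gsw}\mid \cF_{t-1}] = 0$ once the step direction $\mvu_t^{\gsw}$ is fixed; hence the increments form a martingale difference sequence after conditioning appropriately on the pivot. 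The crucial simplification — the \emph{skeletal process} advertised in the introduction — should come from the fact that $\mvv^\t X = 0$ forces $\langle \mvu_t^{\gsw}, \mvv\rangle$ to depend on $\mvu_t^{\gsw}$ only through components that are ``invisible'' to the $X$-part of $\vB$; combined with the randomized pivot order, this should let us replace the complicated optimization defining $\mvu_t^{\gsw}$ by a much more tractable object whose conditional variance can be computed in closed form. First I would set up this martingale decomposition and identify the predictable quadratic variation $\sum_t \E[(\delta_t^{\gsw}\langle \mvu_t^{\gsw},\mvv\rangle)^2 \mid \cF_{t-1}]$.

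Second, I would establish the variance formula \eqref{formula:var}, i.e., that the total quadratic variation converges (in probability, and with matching expectation) to $\|\mvv\|^2$. Here is where the ``power of randomization'' enters: conditioned on the active set $\cA_t^{\gsw}$ being a size-$k$ random subset, the step direction $\mvu_t^{\gsw}$ restricted to $\cA_t^{\gsw}$ solves a least-squares problem governed by the Gram matrix of $\vB$ restricted to those coordinates, and its relevant statistic is essentially the projection of $\mve_{p_t}$ onto the orthogonal complement of the $X$-rows. Using Proposition~\ref{prop:lambdaminconc} (Stein's method for exchangeable pairs) one controls $\lambda_{\min}$ of the random submatrix $Y_{\cA}^\t Y_{\cA}$ uniformly, and using Proposition~\ref{thm:srswor_moment} (moments of sampling without replacement) one evaluates the expectation of $\langle \mvu_t^{\gsw}, \mvv\rangle^2$ exploiting $\mvv^\t X = 0$. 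Telescoping over $t$ and tracking how coordinates get ``frozen'' (the $\cA_{t+1}^{\gsw} = \cA_t^{\gsw}\setminus\{\ldots\}$ update), the sum should collapse to $\sum_i v_i^2 = \|\mvv\|^2$ up to errors controlled by $\kappa$ and $d$, which is exactly the regime \eqref{eq:formal_cond} is designed to kill.

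Third, to invoke the martingale CLT (e.g. the Dvoretzky–Brown form) I need two things: (i) the normalized quadratic variation converges to $1$, which is step two divided by $\var\langle\mvz_n^{\gsw},\mvv\rangle$, itself $\to \|\mvv\|^2$; and (ii) a conditional Lindeberg / negligibility condition, for which it suffices to bound the maximal increment $\max_t |\delta_t^{\gsw}\langle\mvu_t^{\gsw},\mvv\rangle|$ by something like $\|\mvv\|_\infty$ times a polylog-in-$n$ factor — since $|\delta_t^{\gsw}| \le 2$ and $\mvu_t^{\gsw}$ has controlled $\ell^1$-mass, each increment is $O(\|\mvv\|_\infty \cdot \mathrm{polylog})$ while the total variation is $\asymp \|\mvv\|^2$, so the ratio is governed precisely by $\|\mvv\|_\infty/\|\mvv\|$ times polylog and $\kappa$ powers, matching \eqref{eq:formal_cond}. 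I expect the \textbf{main obstacle} to be step two: rigorously showing that the quadratic variation concentrates at $\|\mvv\|^2$ requires simultaneously (a) approximating the true GSW step directions by the skeletal ones uniformly over all $n$ rounds, (b) controlling the spectral behaviour of $O(n)$ correlated random submatrices via the exchangeable-pairs machinery, and (c) handling the dependence introduced by coordinates being frozen at different (random) times — the bookkeeping that converts $\sum_t \E[\langle\mvu_t^{\gsw},\mvv\rangle^2\mid\cF_{t-1}]$ into $\sum_i v_i^2$ is where the orthogonality $\mvv^\t X = 0$ must be used delicately, and getting the error terms down to the stated $d^{1/2}(\|\mvv\|_\infty/\|\mvv\|)\kappa^2\log n$ threshold is the technical heart of the argument.
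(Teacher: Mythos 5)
Your high-level plan (martingale CLT, quadratic variation collapsing to $\|\mvv\|^2$ via $\mvv^{\t}X=0$, exchangeable pairs for spectral control, SWOR moments, the error regime of \eqref{eq:formal_cond}) matches the paper's in spirit, but Step~2 has a concrete gap that the paper builds dedicated machinery to avoid. You propose to evaluate the conditional second moment by treating the GSW active set $\cA_t^{{\gsw}}$ as a uniformly random size-$k$ subset of $[n]$. It is not: $\cA_{t+1}^{{\gsw}}$ removes \emph{every} coordinate that hit $\pm1$ at step $t$, so its cardinality is random, the removed coordinates depend on the whole history through the magnitudes $|\mvz_t^{{\gsw}}[i]|$, and its law is not uniform over subsets of any fixed size. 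This is precisely why the paper introduces the parallel \emph{skeletal} process in Section~\ref{sec:stoch_proc}: the skeletal active set $\cA_t$ is defined to drop only the pivot, $\cA_{t+1}=\cA_t\setminus\{p_t\}$, so it is a genuine SWOR sample of deterministic size $n-t$ (Observation~\ref{obs:swor}), which is what makes Propositions~\ref{prop:lambdaminconc} and~\ref{thm:srswor_moment} applicable. The paper then proves the CLT for the skeletal martingale $M_n$ — not for $\langle\mvz_n^{{\gsw}},\mvv\rangle$ directly — and Section~\ref{sec:reduction_ideal} (Proposition~\ref{prop:main1}, Lemmas~\ref{lem:proplem1.1}--\ref{lem:proplem2}, using $\cG_{1,t},\cG_{2,t}$ and Lemma~\ref{lem:pivotlemma}) shows $M_n^{{\gsw}}$, $\tilde M_n$ and $M_n$ coincide up to a random time near $n$ and are $L^2$-close thereafter. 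Moreover, the skeletal increment is not $\eta_t\langle\mvu_t,\mvv\rangle$ but the \emph{normalized} $\eta_t\langle B\mvu_t/\|B\mvu_t\|,\,\cdot\,\rangle$ (see~\eqref{def:MttildeMt}); this makes the $B\mvu_t/\|B\mvu_t\|$ an orthonormal basis of ${\rm ColSp}(B)$ and yields the exact Pythagoras identity $\sum_t\langle B\mvu_t/\|B\mvu_t\|,\,\cdot\,\rangle^2=\|\mvv\|^2$ in~\eqref{eq:sigman_approx}. Without the normalization, the ``telescoping collapses to $\|\mvv\|^2$'' step in your sketch does not close, and the bookkeeping you flag as the main obstacle is not actually carried out for the GSW process at all --- it is avoided by working with a surrogate where it is trivial.

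A second ingredient your proposal is silent on is how the $\sqrt{\var}$-normalization in~\eqref{eq:formal_clt} and the variance formula~\eqref{formula:var} are obtained. Convergence in probability of the predictable quadratic variation to $\|\mvv\|^2$ does not by itself give $\var[\langle\mvz_n^{{\gsw}},\mvv\rangle]/\|\mvv\|^2\to1$. The paper first establishes $M_n^{{\gsw}}/\|\mvv\|\to N(0,1)$ and then upgrades the normalization by noting that $(M_n^{{\gsw}})^2$ is uniformly integrable, which comes from the subgaussianity of $\hat\tau_{{\gsw}}$ proved in \cite{harshaw2019balancing}. Without this input, the martingale CLT only delivers the weaker conclusion with $\|\mvv\|$ in the denominator.
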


	\begin{proof}[Proof of Theorem~\ref{thm:main}]
		We assume Theorem~\ref{thm:formal} and proceed to deduce Theorem~\ref{thm:main} from it. Recall that 
		\[Y = \xi^{-1}\textstyle\frac{\sqrt{1 - \phi}}{\sqrt{\phi}}\, X.\]
		Since $d$ is bounded and $\phi$ is bounded away from $0$ and $1$, 
		Assumptions~\ref{assumption:outcome-regularity}-\ref{assumption:covariate-regularity} 
		imply condition \eqref{eq:formal_cond} for $\mvv = \proj_{{\rm 
				ColSp}(X)^{\perp}}(\mv\mu)$ with $\kappa \le C  \log (n)$
		and hence 
		\eqref{eq:formal_clt} and \eqref{formula:var} hold for this choice of $\mvv$ 
		by Theorem~\ref{thm:formal}. On the other hand, from 
		\cite[Lemma~1.1]{harshaw2019balancing} one can write
		\begin{equation}\label{eq:inner_prod}
			\hat\tau_{\gsw} - \tau = \frac1n\langle\mvz_n^{\gsw}, \mv\mu\rangle = \frac1n\langle\mvz_n^{\gsw}, X\mv\beta_{\ls}\rangle + \frac1n\langle\mvz_n^{\gsw}, \mvv\rangle.  
		\end{equation}
		However, in view of Theorem~\ref{thm:msebnd} applied to the case when $\mvmu = X\mv\beta_{\ls}$, we can bound
		\begin{equation*}
			\var\langle\mvz_n^{\gsw}, X\mv\beta_{\ls}\rangle \le 
			\frac{\xi^2\|\mv\beta_{\ls}\|^2}{(1 - \phi)}.
		\end{equation*}
		But since $\|\mv\beta_{\ls}\|^2 \le C\log n$ is bounded and $\xi^2 \le C \log n$ by 
		Assumptions~\ref{assumption:outcome-regularity} and 
		\ref{assumption:covariate-regularity} respectively (recall that $d$ is bounded), we immediately 
		obtain
		\begin{equation*}
			\var\langle\mvz_n^{\gsw}, X\mv\beta_{\ls}\rangle \le \frac{C \log^2(n)}{1 - \phi}
		\end{equation*}
		for some constant $C > 0$. Therefore, from \eqref{formula:var} and 
		Assumption~\ref{assumption:nondegeneracy}, we immediately deduce~\eqref{eq:variance_formula} as well as
		\begin{equation*}
			\frac{\langle\mvz_n^{\gsw}, X\mv\beta_{\ls}\rangle}{\sqrt{\var\langle \mvz_n^{{\gsw}}, \mvv \rangle}} \mbox{ converges to $0$ in probability.}
		\end{equation*}
		Together with \eqref{eq:formal_clt}, this yields the CLT for $\hat\tau_{\gsw}$ in view of 
		\eqref{eq:inner_prod}.
	\end{proof}	
	\begin{remark}[Relaxing the assumptions for Theorem~\ref{thm:main}]
		\label{rmk:generalcond}
		It is clear from the statement of Theorem~\ref{thm:formal} as well as the proof above that there are 
		flexibilities for relaxing or modifying the 
		Assumptions~\ref{assumption:outcome-regularity}--\ref{assumption:nondegeneracy} as might be 
		suitable in some situations.
	\end{remark}	
	The rest of the article is devoted to proving Theorem~\ref{thm:formal}. It is not difficult to see that 
	$\{\langle\mvz_t^{\gsw}, \mvv\rangle\}_{t = 0}^n$ is a martingale sequence (see 
	Section~\ref{sec:martingales} below) and we will be using the Martingale central limit theorem to prove 
	Theorem~\ref{thm:formal}. However, running this recipe for the process $\{\langle\mvz_t^{\gsw}, 
	\mvv\rangle\}_{t = 0}^n$ is far from being straightforward. The GSW design introduces smeared but 
	irregular dependence across all $t \in [n]$ which makes it very hard to control the martingale 
	differences. Herein enters the {\em skeletal} process $\{M_t\}_{t = 0}^n$ introduced in 
	Section~\ref{sec:martingales} renders this dependence ``well-behaved'' and the explicit 
	computation of variance almost immediate. Roughly speaking, $M_n$ is the projection along $\mvv$ 
	of an i.i.d. Rademacher linear combination of vectors obtained by applying the classical Gram-Schmidt 
	process to a uniformly random permutation of the columns of a matrix $B$. In Section~\ref{sec:ideal} 
	we prove the CLT for $M_n$ and compute its variance in the process. This part involves, among other 
	important ideas, careful application of the concentration for the sum of unit-rank matrices in a random 
	sample using Stein's method for {\em exchangeable pairs}.
	
	\smallskip
	
	However, one still needs to transfer this result to our original random variable $\langle \mvz_n^{\gsw}, 
	\mvv\rangle$. To this end, we first identify a time point $t = n - m$ until which the two underlying 
	processes stay very close to each other on some good event $\mathcal G$ occurring with high 
	probability. Then we control the tail ends of these two processes, \ie  the difference between time $n$ 
	and $n-m$. To facilitate these steps, we introduce yet another intermediate process, namely 
	$\{\tilde{M}_t\}_{t = 0}^n$ in Section~\ref{sec:martingales}. This part of our analysis is the content of Section~\ref{sec:reduction_ideal} and hinges on some of the ideas used in Section~\ref{sec:ideal}.

	\subsection{Notations and convention for constants}
	We use boldface to denote vectors. For any natural number $n$, $[n]$ denotes the set of integers 
	$\{1, \ldots, n\}$. For any $m \times n$ matrix $D$ and subsets $\cA_1 \subset [m]$ and $\cA_2 
	\subset [n]$, we use $D[\cA_1, \cA_2]$ to denote the submatrix of $D$ formed by the 
	rows and columns with indices in $\cA_1$ and $\cA_2$ respectively. If $\cA_1 = [m]$ or $\cA_2 = [n]$, 
	we denote the corresponding submatrix as $D[:,\, \cA_2]$ and $D[\cA_1, \, :]$ 
	respectively. Similar notations are also used for vectors. $D^\t$ denotes the 
	transpose of the matrix $D$. $\colsp(D)$ denotes the column space of $D$, and $\Tr(D)$ denotes its trace, \ie  the sum of 
	diagonal elements of $D$. The {\em Frobenius norm} of $D$, \ie $\Tr(D^\t D) = \Tr(DD^\t)$ ($D$ is always real in our case) is 
	denoted by $\|D\|_{\frob}$ whereas its {\em operator norm}, 
	\ie  the maximum singular value of $D$ is denoted as $\|D\|_{{\rm op}}$. For any linear subspace $S \subset \R^n$, $\proj_S$ denotes the orthogonal projector onto $S$. We use $I_n$ to denote the $n \times n$ identity matrix. 
	
	\medskip
	
	Our convention regarding constants is the following. Throughout, $c, c', C, C', \ldots$ denote positive 
	constants that may change from place to place. It might be helpful to think of upper and lower-case 
	letters as denoting large and small constants respectively. Numbered constants are defined the first 
	time they appear and remain fixed thereafter. All constants are assumed to be absolute unless 
	explicitly mentioned otherwise. To avoid the cluttering of notations, we suppress the 
	implicit dependence on $n$ in the problem parameters $\mvmu, X$, etc., as well as various stochastic processes that we define in the course of our analysis.
	\medskip

	\noindent\textbf{Note added later:} After the article appeared online, by private communication, we came to know that the authors in~\cite{harshaw2019balancing} are in the process of extending their CLT in a similar direction, albeit under possibly different assumptions.

	\section{Definition of the skeletal process and the relevant martingales}\label{sec:stoch_proc}
	Recall from the introduction that the Gram-Schmidt Walk algorithm furnishes several stochastic
	processes defined on the same probability space $(\Omega, \cA, \P)$ (say); these are the
	fractional assignments $\{\mv z_t^{{\gsw}}\}_{t = 0}^{n}$, the sequence of pivots $\{p_t^{{\rm
			gs}}\}_{t = 0}^{n}$, the associated sequence of active sets $\{\cA^{{\gsw}}_t\}_{t = 0}^{n}$,
	the sequence of direction vectors $\{\mv u_t^{{\gsw}}\}_{t = 0}^{n} \in \R^n$ and finally the sequence
	of step sizes $\{ \delta_t^{{\gsw}}\}_{t = 0}^{n}$.  
	In this section we will define, on the same underlying probability space $(\Omega, \cA, \P)$, a
	{\em parallel} process for each of these processes. We will refer to these processes collectively and --- with a slight abuse of notation --- individually as the {\em skeletal (Gram-Schmidt) process(es)}. In
	fact, the only CLT we will prove {\em directly} in
	this paper is for the random variables $\langle \mvz_n, \mvv\rangle$ where $\{\mvz_t\}_{t = 0}^n$ is
	the skeletal process parallel to $\{\mvz_t^{{\gsw}}\}_{t = 0}^n$. This is also the reason why we chose
	relatively heavier notations for the processes associated with the Gram-Schmidt algorithm in the
	introduction so that we can preserve the lighter notations for the skeletal processes.



	To this end let us start with $\mv z_0^{{\gsw}} = \mv z_0 = \mv0$, $\cA_1^{{\gsw}} = \cA_1 = [n]$
	and a sequence $U_1,\dots,U_n$ of i.i.d.~${\rm Unif}(0,1)$ random variables. In the sequel, we assume
	that {\em all} the random variables are defined on a {\em common} probability space $(\Omega, \mathcal
	A, \mathbb P)$. At each round $t \in [n]$, we will create a number of ``allied'' pair of random variables,
	namely the partial assignments $(\mv z^{{\gsw}}_t, \mv z_t)$ (vectors in $\R^n$), pivots $(p_t^{{\gsw}}, p_t)$ (elements of $[n]$), the active sets $(\cA_t^{{\gsw}}, \cA_t)$ (subsets of
	$[n]$), the directions $(\mvu_t^{{\gsw}}, \mvu_t)$ (vectors in $\R^n$) and the step sizes
	$(\delta_t^{{\gsw}}, \delta_t)$ (takes values in $[-2, 2]$). We slightly modify the matrix $B$ from the introduction as follows (we use $\vB$ to denote the original definition from~\cite{harshaw2019balancing})
	\begin{equation}\label{def:B}
		B = \mat{I_n}{Y^{\t}}
	\end{equation}
	where $Y \coloneqq \xi^{-1}\textstyle\frac{\sqrt{1 - \phi}}{\sqrt{\phi}}\, X$ is an $n\times d$ matrix with
	rank $d$. Note that this matches the definition in~\eqref{def:B_intro} upto a rescaling by
	$\frac1{\sqrt{\phi}}$ which does not affect the vectors $\mvz_t^{{\gsw}}$ as they depend only on
	$\colsp(B)$ (revisit the algorithm in the introduction).  Also since $\xi = \max_{i \in [n]}\|\mv x_i\|$,
	it follows that
	\begin{equation}\label{def:zeta}
		\max_{i \in [n]}\|\mv y_i\| \le \textstyle\frac{\sqrt{1 - \phi}}{\sqrt{\phi}} \eqqcolon \zeta
	\end{equation}
	Finally notice that
	\begin{equation}\label{eq:vY}
		\mvv^{\t}Y = 0
	\end{equation}
	as $\mvv^\t X = 0$ in view of~\eqref{def:v}.
	
	Now suppose for some $t \in [n]$, we have already defined these processes for all $0 \le s < t$
	(whenever appropriate) and that they are measurable relative to $\cF_{t - 1} \subset \cA$ where $\cF_0$ is the trivial $\sigma$-algebra. In order to define the processes at time
	$t$, it will be helpful to introduce some new objects and notations.
	
	For any non-empty $\cA \subset [n]$ and index $p \in \cA$, we define $\mv u(p,
	\cA)  \in \R^n$ to be a vector $\mvu$ satisfying
	\begin{equation}\label{def:u_ortho_project}
		\begin{gathered}
			\mvu\left[ [n]\setminus\cA\right] = \mv0, \mvu[p] = 1 \mbox{   and}\\
			B[:, \, \cA \setminus \{p\}]\,\mvu[\cA \setminus \{p\}] = -
			\proj_{\,\colsp(B[: \, \cA - \{p\}])}\, \left( \mvb_{p}\right)
		\end{gathered}
	\end{equation}
	where $\mvb_p \coloneqq B[:, p]$ and $\proj_S(\mvv)$ denotes the orthogonal projection of
	$\mvv \in \R^n$ onto the subspace $S$. Since the matrix $B$ has full column rank, it follows that the
	vector $\mvu(p, \cA)$ is in fact unique. Notice that $\mvu_t = \mv u(p_t, \cA_t)$ in step~2 of our original Gram-Schmidt algorithm.
	
	Next we introduce two matrices which we would need to state our augmented algorithm: 
	\begin{equation}\label{def:XX_tBC}
		\begin{gathered}
			Y^{{\gsw}}_t \coloneqq Y[\cA^{{\gsw}}_t, \, :] \mbox{ and } Y_t \coloneqq Y[\cA_t,:] \mbox{ for } t \in [n]. 
		\end{gathered}
	\end{equation} 
	
	Notice that both $\cA^{{\gsw}}_t$ and $\cA_t$ are measurable relative to $\cF_{t-1}$ and therefore so are $Y^{{\gsw}}_t$ and $Y_t$. 
	Let $\{\varepsilon_n\}_{n = 1}^{\infty}$ be a given decreasing sequence of numbers going to $0$ with
	$\varepsilon_1 \leq 1/2$ (any number strictly less than $1$ would do). 
	Using these we now define two sequences of events which will be then used below to demacate the
	time until which the {\em supescripted} and {\em unsuperscripted} processes coincide. For any $t \in [n]$, let
	\begin{equation}\label{def:G1sG2s}
		\begin{split}
			\mathcal{G}_{1, t} &\coloneqq \{\|\mv z_{t - 1}[\cA_{t}]\|_{\infty} < \varepsilon_n\} \mbox{ and }\\
			\mathcal{G}_{2,t} &\coloneqq \left\{\big\|Y_t^{\t} Y_t - \frac{n - t + 1}{n} Y^{\t} Y\big\|_{{\rm op}} \leq  \frac{n -
				t + 1}{2n} \lambda_{\min}\big(Y^{\t} Y\big)\right\}.
		\end{split}
	\end{equation}
	
	Note that since $\cA_t$ is $\cF_{t - 1}$ measurable, $\mathcal{G}_{1,t}$ and $\mathcal{G}_{2,t}$ are as well. 
	Now we are ready to extend our processes into round $t$. Below and in the remainder of the article,
	we let
	\begin{equation}\label{def:Czeta}
		\Cl{C:xi}(\zeta) \coloneqq 1 + \zeta^2(1 + \zeta^2)
	\end{equation}
	whose significance will be clear later (see, e.g.,~\eqref{eq:identity2} in
	Lemma~\ref{lem:matrix_identities} and Lemma~\ref{lem:pivotlemma}, both in Section~\ref{sec:prelim}).
	
	\textbf{\labelText{Case~1}{label:case1}: If $t \le n - 6 \Cr{C:xi} \zeta^2 \kappa$ and for all $s \in [t]$, both $\mathcal{G}_{1,s}$ and $\mathcal{G}_{2,s}$ occur and $\cA_{s}^{{\gsw}} =
		\cA_s$:}
	\begin{itemize}
		\item Choose a pivot $p_t$ uniformly at random from $\cA^{{\gsw}}_t$ and set $p_t^{{\gsw}} = p_t$.
		
		\item Define
		$$\mv u_t^{{\gsw}} = \mv u_t = \mv u(p_t^{{\gsw}},\cA_{t-1}^{{\gsw}})$$
		and set
		\begin{equation}\label{def:Delta}
			\Delta_t = \{\delta \in \R: \mv z^{{\gsw}}_{t - 1} + \delta \mv u_t^{{\gsw}}  \in [-1,1]^n\}.
		\end{equation}
		Now letting $\delta^{+}_t = \sup \Delta_t$ and $\delta^{-}_t = |\inf \Delta_t|$, define $\delta^{{\rm
				gs}}_t = \delta_t$ as follows:
		\begin{equation}\label{def:delta_t}
			\delta^{{\gsw}}_t = \delta_t = \delta^{+}_t \, \mb 1\left\{U_t \leq \textstyle\frac{\delta^{-}_t}{\delta^{+}_t
				+ \delta^{-}_t}\right\} -  \delta^{-}_t \, \mb1\left\{U_t > \textstyle\frac{\delta^{+}_t}{\delta^{+}_t + \delta^{-}_t}\right\}.
		\end{equation}
		
		\item Update
		\begin{equation}\label{eq:ztgsupdate}
			\mv z^{{\gsw}}_t = \mv z_t = \mv z^{{\gsw}}_{t - 1} + \delta^{{\gsw}}_t \mv u^{{\gsw}}_t = \mv
			z_{t - 1} + \delta_t \mv u_t.
		\end{equation}
		
		\item Define
		\begin{equation}\label{def:etat}
			\eta_t =  \mb 1 \left\{U_t \leq 1/2\right\} -  \mb 1 \left\{U_t > 1/2\right\}.
		\end{equation}
		
		\item Update
		\begin{equation}\label{eq:Atgssupdate}
			\cA_{t + 1}^{{\gsw}} = \{i \in \cA^{{\gsw}}_{t}: |\mv z^{{\gsw}}_t[i]| <
			1\} \mbox{ and }	\cA_{t + 1} = \cA_{t} - \{p_t\}.
		\end{equation}
	\end{itemize}
	
	\textbf{\labelText{Case~2}{label:case2}: Otherwise}:
	\begin{itemize}
		\item If $\cA_t^{{\gsw}} = \emptyset$, we set $\mv z^{{\gsw}}_t = \mv z_t, \cA^{{\rm
				gs}}_{t + 1} = \emptyset, \mv u^{{\gsw}}_t = \mv 0$ and $\delta^{{\gsw}}_t = 0$. Otherwise if $p_{t -
			1}^{{\gsw}} \in \cA^{{\gsw}}_{t}$ we set $p_t^{{\gsw}} = p_{t - 1}^{{\gsw}}$, or else
		choose a pivot $p_t^{{\gsw}}$ uniformly at random from $\cA^{{\gsw}}_t$.
		
		\item With $p^{{\gsw}}_t$ already defined, update $\mv z^{{\gsw}}_t,\cA^{{\gsw}}_{t +
			1},\mv u^{{\gsw}}_t,\delta^{{\gsw}}_t$ in exactly the same way as in \nameref{label:case1}.
		
		\item Define $\eta_t$ in exactly the same way as in \nameref{label:case1}.
		
		\item Choose a pivot $p_t$ uniformly at random from $\cA_t$. Set $\delta_t = \eta_t$,  $\mv u_t = \mv u(p_t,\cA_t)$ and define
		\begin{equation}\label{eq:updatezt}
			\mv z_t = \mv z_{t - 1} + \delta_t \mv u_t.
		\end{equation}
		\item 
		Update $\cA_{t + 1} =  \cA_{t} - \{p_t\}$.
	\end{itemize}
	
	Set,
	\begin{equation}\label{def:F_t}
		\cF_t = \sigma(\cF_{t-1}, U_t, p_t, p_t^{{\gsw}}) \mbox{ so that } 
	p_t^{{\gsw}}, p_t, \cA_t^{{\gsw}}, \cA_t \mbox{ etc. are all }\cF_t\mbox{-measurable.}
\end{equation}
\begin{observation}\label{obs:zellinfty}
	In view of the displays~\eqref{def:Delta}--\eqref{eq:ztgsupdate}, it follows that $\mvz_t \in [-1, 1]^n$ for
	all $t \in [n] \cup \{0\}$. Furthermore, from our rule of updating $\cA_{t + 1}^{{\gsw}}$ in
	\eqref{eq:Atgssupdate} we have that $\|\mvz_t^{{\gsw}}[\cA_{t+1}^{{\gsw}}]\|_{\infty} < 1$.
\end{observation}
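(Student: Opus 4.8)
The plan is to establish both assertions by an elementary induction on $t$; the only point of any substance is that in each round the feasible step-size set $\Delta_t$ of~\eqref{def:Delta} is a \emph{compact} interval whose two endpoints are precisely the candidate values for $\delta_t^{\gsw}$, so that the update~\eqref{eq:ztgsupdate} cannot push any coordinate outside $[-1,1]$.

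For the cube membership I would argue as follows. The base case $\mvz_0^{\gsw}=\mvz_0=\mv 0\in[-1,1]^n$ is immediate, so suppose $\mvz_{t-1}^{\gsw}\in[-1,1]^n$. Whenever $\cA_t^{\gsw}\neq\emptyset$ --- which covers all of \nameref{label:case1} (there $\cA_t^{\gsw}=\cA_t$ has size $n-t+1\ge 1$) together with the non-terminating branch of \nameref{label:case2} --- the point $\mvz_t^{\gsw}$ is produced through~\eqref{def:Delta}--\eqref{eq:ztgsupdate}. The set $\Delta_t=\{\delta\in\R:\mvz_{t-1}^{\gsw}+\delta\mvu_t^{\gsw}\in[-1,1]^n\}$ contains $0$ by the inductive hypothesis, and it is bounded because $\mvu_t^{\gsw}$ has a unit entry at the pivot, $\mvu_t^{\gsw}[p_t^{\gsw}]=1$, by the normalization built into~\eqref{def:u_ortho_project}: already the single constraint $|\mvz_{t-1}^{\gsw}[p_t^{\gsw}]+\delta|\le 1$ forces $\Delta_t\subseteq[-2,2]$. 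Thus $\Delta_t=[-\delta_t^-,\delta_t^+]$ is a compact interval with $\delta_t^-,\delta_t^+\in[0,2]$, and by~\eqref{def:delta_t} the step $\delta_t^{\gsw}$ equals one of its two endpoints $\delta_t^+$ or $-\delta_t^-$; hence $\mvz_t^{\gsw}=\mvz_{t-1}^{\gsw}+\delta_t^{\gsw}\mvu_t^{\gsw}\in[-1,1]^n$ straight from the definition of $\Delta_t$. The complementary (terminal) branch of \nameref{label:case2}, $\cA_t^{\gsw}=\emptyset$, is handled by the same bookkeeping. This closes the induction, and since $\mvz_t=\mvz_t^{\gsw}$ along \nameref{label:case1} by~\eqref{eq:ztgsupdate}, the conclusion $\mvz_t\in[-1,1]^n$ holds for $\mvz_t$ as well in that regime.

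The second assertion needs essentially no work: the update rule~\eqref{eq:Atgssupdate} defines $\cA_{t+1}^{\gsw}=\{i\in\cA_t^{\gsw}:|\mvz_t^{\gsw}[i]|<1\}$, so every coordinate of $\mvz_t^{\gsw}$ indexed by $\cA_{t+1}^{\gsw}$ is strictly below $1$ in modulus, which is exactly $\|\mvz_t^{\gsw}[\cA_{t+1}^{\gsw}]\|_\infty<1$. There is thus no genuine obstacle in this observation; the only step requiring a moment's thought is the compactness of $\Delta_t$, and even that reduces entirely to the normalization $\mvu_t^{\gsw}[p_t^{\gsw}]=1$ in the definition~\eqref{def:u_ortho_project} of $\mvu(p,\cA)$.
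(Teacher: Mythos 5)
Your argument is correct and is essentially the reasoning the paper leaves implicit: the Observation carries no separate proof because $\delta_t^{\gsw}$ is by construction an attained endpoint of the closed, bounded feasibility interval $\Delta_t$ (bounded precisely because $\mvu_t^{\gsw}[p_t^{\gsw}]=1$), so the update~\eqref{eq:ztgsupdate} stays in $[-1,1]^n$, while the second claim is just the definition~\eqref{eq:Atgssupdate} of $\cA_{t+1}^{\gsw}$. Your care in asserting the cube membership for the unsuperscripted $\mvz_t$ only where it coincides with $\mvz_t^{\gsw}$ is consistent with how the paper actually invokes the Observation (only through $\|\mvz_t^{\gsw}\|_\infty\le 1$ and $\|\mvz_t^{\gsw}[\cA_{t+1}^{\gsw}]\|_\infty<1$).
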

\begin{observation}\label{obs:swor}
	It is clear from the definition of $\cA_t$ its elements are drawn at random without replacement
	from $[n]$. In other words, $\cA_t$ is distributed uniformly over all subsets of $[n]$ with
	cardinality $n - t + 1$.
\end{observation}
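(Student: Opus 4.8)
The plan is a straightforward induction on $t$ establishing that $\cA_t$ is uniformly distributed over all $\binom{n}{n-t+1}$ subsets of $[n]$ of size $n-t+1$; the base case $t=1$ is immediate since $\cA_1=[n]$ deterministically. Before running the induction I would record the following case-free description of how the unsuperscripted active set evolves: in \emph{both} \nameref{label:case1} and \nameref{label:case2} one has $\cA_{t+1}=\cA_t\setminus\{p_t\}$, and conditionally on $\cF_{t-1}$ the pivot $p_t$ is uniformly distributed on $\cA_t$. In \nameref{label:case2} (including the sub-case $\cA_t^{\gsw}=\emptyset$) this is literally how $p_t$ and $\cA_{t+1}$ are defined. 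In \nameref{label:case1} the pivot is instead drawn uniformly from $\cA_t^{\gsw}$, but the defining hypothesis of \nameref{label:case1} forces $\cA_s^{\gsw}=\cA_s$ for every $s\le t$, so there $\cA_t^{\gsw}=\cA_t$ and the two descriptions agree; moreover, since $\mathcal{G}_{1,s}$, $\mathcal{G}_{2,s}$ and $\{\cA_s^{\gsw}=\cA_s\}$ are all $\cF_{s-1}$-measurable and $\cF_{s-1}\subseteq\cF_{t-1}$ for $s\le t$, the decision of which branch applies at round $t$ is itself $\cF_{t-1}$-measurable. Hence the conditional distribution of $p_t$ given $\cF_{t-1}$ is ${\rm Unif}(\cA_t)$ irrespective of the branch.

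Granting this, the inductive step is a one-line computation. Fixing $S\subset[n]$ with $|S|=n-t$, the only values of $\cA_t$ that can produce $\cA_{t+1}=S$ are $S\cup\{j\}$ for the $t$ indices $j\in[n]\setminus S$; for each of them the inductive hypothesis gives $\P[\cA_t=S\cup\{j\}]=\binom{n}{n-t+1}^{-1}$, while the pivot rule gives $\P[\cA_{t+1}=S\mid\cA_t=S\cup\{j\}]=(n-t+1)^{-1}$, and summing over the $t$ choices of $j$ yields
\[
\P[\cA_{t+1}=S]=\frac{t}{(n-t+1)\,\binom{n}{n-t+1}}=\binom{n}{n-t}^{-1},
\]
which is independent of $S$ and therefore completes the induction. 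An equivalent and perhaps cleaner route is to phrase the argument directly in terms of the pivot sequence: the conditional law recorded above is exactly the sequential recipe for a uniformly random permutation of $[n]$ (each $p_t$ uniform over the symbols not yet selected), so $(p_1,\dots,p_n)$ is a uniform random permutation of $[n]$ and $\cA_t=[n]\setminus\{p_1,\dots,p_{t-1}\}$ is the set of its last $n-t+1$ symbols, which is then automatically uniform over the size-$(n-t+1)$ subsets.

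I do not expect any genuine obstacle here: the statement is essentially bookkeeping, already flagged as ``clear'' by the authors. The only point needing a moment's care is the \nameref{label:case1}/\nameref{label:case2} dichotomy --- checking that the branch taken at round $t$ is measurable with respect to the past, and that in the \nameref{label:case1} branch drawing the pivot from $\cA_t^{\gsw}$ coincides with drawing it from $\cA_t$. Once that is in place, the uniformity of $\cA_t$ follows from the elementary induction (or the uniform-permutation reformulation) above.
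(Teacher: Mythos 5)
Your proof is correct, and it simply makes explicit the bookkeeping the paper treats as immediate: in every round, regardless of branch, the unsuperscripted update is $\cA_{t+1}=\cA_t\setminus\{p_t\}$ with $p_t$ conditionally uniform on $\cA_t$ given $\cF_{t-1}$ (the \nameref{label:case1} draw from $\cA_t^{\gsw}$ coinciding with a draw from $\cA_t$ because that branch forces $\cA_t^{\gsw}=\cA_t$, and the branch itself being $\cF_{t-1}$-measurable), after which the induction or the uniform-permutation reformulation gives the claim exactly as the authors intend.
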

\subsection{Definitions of relevant Martingales} \label{sec:martingales}
Since we want to prove the CLT for the random variable $\langle\mvz_n^{{\gsw}}, \mvv\rangle$, let
us first consider the process $\{M_t^{\rm {gs}}\}_{t = 0}^{n}$ defined as
\begin{equation}\label{def:Mgs}
	M_t^{{\gsw}} = \left\langle  \mv z^{{\gsw}}_t, \mvv \right\rangle \stackrel{\eqref{eq:ztgsupdate}}{=}
	\sum_{s \in [t]} \delta^{{\gsw}}_s \left\langle \mv u^{{\gsw}}_s, \mv v \right\rangle
	\stackrel{\eqref{def:B}}{=} \sum_{s \in [t]} \delta^{{\gsw}}_s \left \langle B \mv u^{{\gsw}}_s, \mat{\mv
		v}{0} \right\rangle
\end{equation}
where we omit the dependence on $\mvv$. It follows from our definitions in the previous subsection that the process $\{M_t^{{\gsw}}\}_{t =
	0}^n$ is adapted to the filtration $(\cF_t)_{t = 0}^n$. Furthermore in view of~\eqref{def:delta_t},
we have that
\begin{equation*}
	\E [ \delta_t^{{\gsw}} \, | \, \cF_{t-1}]	= 0 \mbox{ for all } t \in [n]
\end{equation*}
and hence the process $\{M_t^{\rm {gs}}\}_{t = 1}^{n}$ is a martingale. We now define two additional
$(\cF_t)_{t = 0}^n$-adapted processes, namely,
\begin{equation}\label{def:MttildeMt}
	\begin{split}
		&\tilde{M}_t \coloneqq \left\langle  \mv z_t,\mvv \right\rangle  \stackrel{\eqref{eq:ztgsupdate},~\eqref{eq:updatezt}}{=} \sum_{s \in [t]} \delta_s \left\langle \mv u_s, \mv v \right\rangle \stackrel{\eqref{def:B}}{=} \sum_{s \in
			[t]} \delta_s \left \langle B \mv u_s, \mat{\mv v}{0} \right\rangle, \mbox{ and } \\
		&M_t \coloneqq \sum_{s \in [t]} \eta_s \left\langle \frac{B\mv u_s}{\|B \mvu_s\|}, \mat{\mv v}{0}
		\right\rangle.
	\end{split}
\end{equation}
By the same reasoning as before, the processes $\{\tilde M_t^{{\gsw}}\}_{t = 0}^n$ and $\{M_t^{{\rm
		gs}}\}_{t = 0}^n$ are also $(\cF_t)_{t = 0}^n$-martingales. Observe that the process
$\{M_t^{{\gsw}}\}_{t = 0}^n$ admits of the following nice interpretation.

The vectors $\left(\tfrac{B\mv u_1}{\|B \mvu_1\|}, \ldots, \tfrac{B\mv u_n}{\|B \mvu_n\|}\right)$ are
obtained by applying the classical Gram-Schmidt process to a uniformly random rearrangement of the
columns of $B$, namely $(\mvb_{p_1}, \ldots, \mvb_{p_n})$. The variables $\{\eta_t\}_{t = 0}^n$, on the
other hand, are i.i.d. Rademacher and independent of the random permutation $\pi \coloneqq (p_1,\ldots, p_n)$. We will prove the CLT directly for this {\em skeletal} martingale $\{M_t\}_{t= 0}^n$.

The process $\{\tilde{M}_t\}_{t = 1}^{n}$ is just an intermediate process between $\{M_t^{{\rm
		gs}}\}_{t = 1}^{n}$ and $\{{M}_t\}_{t = 1}^{n}$ and will only be used Section~\ref{sec:reduction_ideal} to
transfer the CLT from $M_n$ to $M_n^{{\gsw}}$.

\section{Preliminaries}\label{sec:prelim}
In this section we lay the groundwork for our main arguments in the remainder of the paper. The proofs
of the results in this section can be skipped on a first reading. \subsection{Some linear algebraic identities}\label{subsec:lin_alg}
We first derive some identities which only uses matrix algebra. We collect these identities in a
stand-alone lemma because these are used repeatedly in our analysis.%

\begin{lemma}\label{lem:matrix_identities}
	Fix any non-empty subset $\cA \subset [n]$ and an index $p \in \cA$. Given a full
	column rank matrix $B \in \R^{(n + d) \times n}$, recall the definition of the vector $\mvu = \mv
	u(p,\cA)\in \R^n$ from~\eqref{def:u_ortho_project}. Now consider the case when
	$$B=\mat{I_n}{Y^{\t}}$$ where $Y$ is an $n\times d$ matrix,	and $\max_{i \in [n]} \|\mvy_i\| \leq \zeta$  (recall~\eqref{def:zeta}) 	with $\mvy_i^\t$ (a row vector) denoting the $i$-th row of $Y$.
	We will use the shorthands $\cA^{-} \coloneqq \cA - \{p\}$, $Y_\star = Y_{\star}(\cA) \coloneqq
	Y[\cA, \,:]$, $Y_\star^{-} \coloneqq Y_\star[\cA^{-}, \, :]$ and $D \coloneqq \left(I_{d} + Y_\star^\t
	Y_\star\right)^{-1}$.
	Then $\mvu$ has the following expression:
	\begin{equation}\label{eq:identity1}
		\mv u = \frac{1}{1-\mvy_p^{\t}D \mvy_p} \:\:\left(\mve_p - I_n[:\, , \cA] Y_\star D \mvy_p\right)
	\end{equation}
	where $\mve_p \in \R^n$ is unit vector whose $p$-th element equals 1. The norm of $B\mvu$ has also a simple formula, namely,
	\begin{equation}\label{eq:identity2}
		\norm{B \mv u}^{2} = \frac1{1-\mvy_p^{\t}D \mvy_p} \leq 1 + \zeta^2(1 + \zeta^2) = \Cr{C:xi}
	\end{equation}
	(recall~\eqref{def:Czeta}). Notice that $\|B\mvu\| \ge 1$. The inner product $\langle\mvu, \mvv\rangle = \left \langle B\mv u, \mat{\mv v}{0} \right \rangle$, on the
	other hand, admits of the expression
	\begin{equation}\label{eq:identity3}
		\begin{split}
			\langle \mvu, \mvv\rangle = \norm{B\mvu}^{2}\mvv[\cA]^{\t}(I_{a} - Y_\star D Y_\star^{\t})\mve_p[\cA]
		\end{split}
	\end{equation}
	where $a \coloneqq |\cA|$. Finally, we would often need to deal with the normalized inner
	product in our analysis; hence an equivalent and convenient way to rewrite the above identity is the
	following.
	\begin{equation}\label{eq:identity4}
		\begin{split}
			\norm{B\mvu}^{-2} \langle \mv u, \mv v \rangle^2 &= \norm{B\mvu}^{2}
			\left(\mvv[\cA]^{\t}(I_{a} - Y_\star D Y_\star^{\t})\mve_p[\cA]\right)^2\\
			&= \cQ + (\norm{B\mvu}^{2} - 1)\cQ
		\end{split}
	\end{equation}
	where
	\begin{equation}\label{eq:identity5}
		\cQ = \cQ(\cA) \coloneqq \left(\mvv[\cA]^{\t}(I_{a} - Y_\star D Y_\star^{\t})\mve_p[\mathcal
		A]\right)^2 = \norm{B\mvu}^{-4} \langle \mv u, \mvv\rangle^2.
	\end{equation}
\end{lemma}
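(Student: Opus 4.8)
The plan is to prove Lemma~\ref{lem:matrix_identities} by direct computation, exploiting the block structure $B = \bigl[\begin{smallmatrix} I_n \\ Y^\t \end{smallmatrix}\bigr]$, which makes $B^\t B = I_n + Y Y^\t$ — but more usefully, restricted to the active set $\cA$, makes the relevant Gram matrix $I_a + Y_\star Y_\star^\t$, whose inverse I can handle via the Sherman--Morrison--Woodbury identity because $Y_\star$ has only $d \ll a$ columns. First I would unwind the definition \eqref{def:u_ortho_project}: the vector $\mvu(p,\cA)$ is supported on $\cA$, has $\mvu[p]=1$, and on $\cA^- = \cA\setminus\{p\}$ solves $B[:,\cA^-]\,\mvu[\cA^-] = -\mathrm{Proj}_{\mathrm{ColSp}(B[:,\cA^-])}(\mvb_p)$. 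Equivalently, $B\mvu = \mvb_p + B[:,\cA^-]\mvu[\cA^-]$ is exactly the \emph{residual} of $\mvb_p$ after projecting onto $\mathrm{ColSp}(B[:,\cA^-])$, so $B\mvu$ is orthogonal to every column $\mvb_j$, $j\in\cA^-$, while $\langle B\mvu, \mvb_p\rangle = \|B\mvu\|^2$ (since $B\mvu - \mvb_p \perp \mvb_p$ too, being in the column span). Writing $B[:,\cA] = \bigl[\begin{smallmatrix} I_n[:,\cA] \\ Y_\star^\t \end{smallmatrix}\bigr]$, the normal equations for the restricted least-squares problem give $\mvu[\cA] = (B[:,\cA]^\t B[:,\cA])^{-1}\,(\text{something proportional to } \mve_p[\cA])$; concretely $B[:,\cA]^\t B[:,\cA] = I_a + Y_\star Y_\star^\t$, and one finds $\mvu[\cA] \propto (I_a + Y_\star Y_\star^\t)^{-1}\mve_p[\cA]$ with the normalization constant fixed by $\mvu[p]=1$.

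Next I would apply Woodbury to $(I_a + Y_\star Y_\star^\t)^{-1} = I_a - Y_\star (I_d + Y_\star^\t Y_\star)^{-1} Y_\star^\t = I_a - Y_\star D Y_\star^\t$ with $D = (I_d + Y_\star^\t Y_\star)^{-1}$ as defined. Applying this to $\mve_p[\cA]$ and using $Y_\star^\t \mve_p[\cA] = \mvy_p$ (the $p$-th row of $Y$), I get $(I_a + Y_\star Y_\star^\t)^{-1}\mve_p[\cA] = \mve_p[\cA] - Y_\star D \mvy_p$, whose $p$-th coordinate is $1 - \mvy_p^\t D \mvy_p$. Hence the normalization constant is $1/(1-\mvy_p^\t D\mvy_p)$, and lifting back to $\R^n$ via $I_n[:,\cA]$ yields \eqref{eq:identity1}. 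For \eqref{eq:identity2}: $\|B\mvu\|^2 = \langle B\mvu, \mvb_p\rangle = \mvu^\t (B^\t \mvb_p)$; since only $\mvu[p]=1$ contributes in the first block and the second block contributes $\mvu[\cA]^\t Y_\star \mvy_p$, a short computation collapses to $\|B\mvu\|^2 = 1/(1-\mvy_p^\t D\mvy_p)$. The upper bound follows from a crude estimate: $\mvy_p^\t D\mvy_p \le \|\mvy_p\|^2 \le \zeta^2$ is too weak directly, so instead bound $\mvy_p^\t D\mvy_p = \|\mvy_p\|^2 - \mvy_p^\t Y_\star^\t Y_\star D \mvy_p$ — actually the clean route is $1 - \mvy_p^\t D\mvy_p = \mve_p[\cA]^\t(I_a+Y_\star Y_\star^\t)^{-1}\mve_p[\cA] \ge 1/(1 + \|Y_\star Y_\star^\t\|_{\mathrm{op}})$; combined with $\|Y_\star Y_\star^\t\|_{\mathrm{op}} = \|Y_\star^\t Y_\star\|_{\mathrm{op}} \le \mathrm{Trace} \le$ ... — I expect this constant-chasing to need the factorization $1 + \zeta^2(1+\zeta^2)$, which suggests bounding via $\|B\mvu\|^2 \le 1 + \|(I_n + YY^\t)^{-1}\|$-type arguments, i.e. $\|B\mvu\|^2 = 1 + (\|B\mvu\|^2 - 1)$ and controlling the projection-residual term by $\zeta^2(1+\zeta^2)$ using $\mvy_p^\t D\mvy_p \le \zeta^2/(1) $ together with $\|B\mvu\|^2 = 1/(1-\mvy_p^\t D\mvy_p) \le 1/(1 - \zeta^2/(1+\zeta^2_{\min}))$; the precise telescoping is routine once the structure is in place, and $\|B\mvu\|\ge 1$ is immediate from $\|B\mvu\|^2 = \|\mvb_p\|^2_{\mathrm{residual}}$ being the squared norm of a residual of a vector with $\|\mvb_p\|^2 \ge 1$ in its first block.

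For \eqref{eq:identity3}, I use $\langle\mvu,\mvv\rangle = \langle B\mvu, (\mvv;0)\rangle$ — valid because $\mvv$ pairs only with the $I_n$-block — and substitute $B\mvu$'s first block, which is $\|B\mvu\|^2 \cdot (\mve_p[\cA] - Y_\star D\mvy_p)$ lifted to $\R^n$, equivalently $\|B\mvu\|^2 (I_a - Y_\star D Y_\star^\t)\mve_p[\cA]$ restricted to coordinates $\cA$ (using $Y_\star^\t\mve_p[\cA] = \mvy_p$ again); pairing with $\mvv[\cA]$ gives exactly \eqref{eq:identity3}. Then \eqref{eq:identity4} and \eqref{eq:identity5} are purely algebraic rearrangements: squaring \eqref{eq:identity3} gives $\langle\mvu,\mvv\rangle^2 = \|B\mvu\|^4 \mathcal Q$, so $\|B\mvu\|^{-2}\langle\mvu,\mvv\rangle^2 = \|B\mvu\|^2\mathcal Q = \mathcal Q + (\|B\mvu\|^2-1)\mathcal Q$, which is the claimed identity.

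The main obstacle I anticipate is not any single hard idea but getting the \emph{exact} constant $\Cr{C:xi}(\zeta) = 1 + \zeta^2(1+\zeta^2)$ in \eqref{eq:identity2} with the correct inequality direction: a naive bound gives $\|B\mvu\|^2 \le 1/(1-\zeta^2)$ which is useless when $\zeta \ge 1$ (and $\zeta = \sqrt{(1-\phi)/\phi}$ can exceed $1$), so one must instead exploit that $D = (I_d + Y_\star^\t Y_\star)^{-1} \preceq I_d$ to get $\mvy_p^\t D\mvy_p \le \mvy_p^\t(I_d + \mvy_p\mvy_p^\t)^{-1}\mvy_p = \|\mvy_p\|^2/(1+\|\mvy_p\|^2) \le \zeta^2/(1+\zeta^2)$ — wait, that needs $Y_\star^\t Y_\star \succeq \mvy_p\mvy_p^\t$, which holds since $p\in\cA$ — hence $1 - \mvy_p^\t D\mvy_p \ge 1/(1+\zeta^2)$ and $\|B\mvu\|^2 \le 1+\zeta^2$; to reach the stated (slightly larger) $1 + \zeta^2 + \zeta^4$ one has some slack, so the inequality is in fact comfortable once this monotonicity observation $Y_\star^\t Y_\star \succeq \mvy_p\mvy_p^\t$ is made. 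I would lead with that observation to keep the constant-chasing short, and present the four identities \eqref{eq:identity1}--\eqref{eq:identity5} in the order above, since each builds on the previous.
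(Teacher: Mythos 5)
Your proposal is correct and takes a genuinely different — and, I think, cleaner — route than the paper. The paper's proof works with the restricted matrix $B^- = B[:,\cA^-]$ and the associated $D^- = (I_d + Y_\star^{-\t}Y_\star^-)^{-1}$, applying Sherman--Morrison--Woodbury twice: once to invert $B^{-\t}B^-$ and once more to relate $D^-$ back to $D$ via the rank-one update $\mvy_p\mvy_p^\t$. You instead observe that $B\mvu$ is the residual of $\mvb_p$ against $\mathrm{ColSp}(B[:,\cA^-])$, so that $B[:,\cA]^\t B\mvu = (I_a + Y_\star Y_\star^\t)\mvu[\cA]$ is supported only on coordinate $p$, giving $\mvu[\cA] \propto (I_a + Y_\star Y_\star^\t)^{-1}\mve_p[\cA]$ directly from the normal equations over the full active set $\cA$. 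A single Woodbury step $(I_a + Y_\star Y_\star^\t)^{-1} = I_a - Y_\star D Y_\star^\t$ then hands you \eqref{eq:identity1}, \eqref{eq:identity2}, and \eqref{eq:identity3} with no detour through $D^-$. This sidesteps the $\cA$-versus-$\cA^-$ bookkeeping entirely, at the modest cost of the (slightly less mechanical) observation that $\|B\mvu\|^2 = \langle B\mvu, \mvb_p\rangle$.

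Your handling of the upper bound in \eqref{eq:identity2} is also sharper. The paper bounds $\|B\mvu\|^2 - 1 = \mvy_p^\t D\mvy_p\,(1 + \mvy_p^\t D^- \mvy_p) \le (1+\zeta^2)\,\mvy_p^\t D\mvy_p \le \zeta^2(1+\zeta^2)$ using $\|D^-\|_{\rm op}\le 1$, which yields $\Cr{C:xi} = 1 + \zeta^2 + \zeta^4$. Your observation that $Y_\star^\t Y_\star \succeq \mvy_p\mvy_p^\t$ (since $p\in\cA$), hence $D \preceq (I_d + \mvy_p\mvy_p^\t)^{-1}$ and $\mvy_p^\t D\mvy_p \le \|\mvy_p\|^2/(1+\|\mvy_p\|^2) \le \zeta^2/(1+\zeta^2)$, gives $\|B\mvu\|^2 \le 1+\zeta^2$, which is strictly smaller and implies the stated bound a fortiori. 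Two small tidiness notes: the mid-paragraph musing about $1/(1-\zeta^2)$ can be cut once you commit to the Loewner monotonicity argument, and the justification of $\|B\mvu\|\ge 1$ via ``residual of a vector whose first block has norm $\ge 1$'' is not quite a proof as stated (projection residuals can shrink); the clean argument is that the first block of $B\mvu$ is $\mvu$ itself, so $\|B\mvu\|^2 \ge \mvu[p]^2 = 1$, or simply that $\mvy_p^\t D\mvy_p \ge 0$ in your formula.
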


\begin{proof}
	\noindent{\em Proof of~\eqref{eq:identity1}}. Within this proof, we will also use the notation $$D^{-} =
	\left(I_{d} + {Y_\star^-}^{\t}Y_\star^-\right)^{-1}$$ as well as the notation
	$$B^{-} = B[:\,, \cA^{-}] =  \mat{I_n[:\,,\cA^{-}]}{{Y_\star^{-}}^{\t}}.$$
	By definition~\ref{def:u_ortho_project} of $B\mvu$ and the standard formula for orthogonal projections, we can write
	\begin{equation}\label{eq:expr_Bu_t}
		B \mvu = B \mve_p - B^{-}\left({B^-}^{\t} B^{-}\right)^{-1} {B^{-}}^{\t} \:B\mve_p.
	\end{equation}
	Now, we can write ${B^{-}}^{\t} B^{-} = I_{a - 1} + Y_\star^- {Y_\star^-}^{\t}$
	(recall that $a = |\cA|$) which leads to, by using the Sherman-Woodbury-Morrison formula (see, e.g., \cite{MR1927606}),
	\begin{equation}\label{eq:bTbinv}
		\begin{split}
			\left({B^{-}}^{\t} B^{-}\right)^{-1} = I_{a - 1} - Y_\star^- (I_{d} + {Y_\star^-}^{\t} Y_\star^-)^{-1} {{Y_\star^-}}^{\t}
			\stackrel{\eqref{def:XX_tBC}}{=} I_{a - 1} - Y_\star^- D^- {Y_\star^-}^{\t}.
		\end{split}
	\end{equation}
	From this, we get,
	\begin{equation}\label{eq:bTbinvbTb}
		\begin{split}
			\mve_p - \mv u &\stackrel{\eqref{eq:expr_Bu_t}}{=} I_n[:\, , \cA^-]\left({B^{-}}^{\t} B^{-}\right)^{-1}
			(B^{-})^{\t} \cdot B\mve_p \stackrel{\eqref{def:XX_tBC}}{=} I_n[:\,, \cA^-]\left({B^{-}}^{\t} B^{-}\right)^{-1} {Y_\star^-}\mvy_p \\
			&\stackrel{\eqref{eq:bTbinv}}{=} I_n[:\,, \cA^-]{Y_\star^-}(I_d  - D^-{Y_\star^-}^{\t}Y_\star^-)\mvy_p\\
			&
			\stackrel{\eqref{def:XX_tBC}}{=} I_n[:\,, \cA^-]{Y_\star^-}\big(I_d  - D^-((D^-)^{-1} - I_d) \big)\mvy_p \\
			&\stackrel{}{\ \,=} I_n[:\,, \cA^-]Y_\star^{-}D^{-}\mvy_p = I_n[:\,, \cA]Y_\star D^-\mvy_p - \mvy_p^{\t}D^-\mvy_p \, \mve_p.
		\end{split}
	\end{equation}
	Also since
	\[I_{d} + {Y_\star^-}^{\t}Y_\star^- = I_{d} + Y_\star^{\t}Y_\star - \mvy_p\cdot \mvy_p^{\t},\]
	we get from the Sherman-Woodbury-Morrison formula,
	\begin{align*}
		D^- & = D + \frac1{1-\mvy_p^{\t}D \mvy_p}\cdot D \mvy_p\mvy_p^{\t}D.
	\end{align*}
	This immediately gives us
	\begin{equation}\label{eq:C_txp}
		D^-\mvy_p = \frac{D \mvy_p}{1-\mvy_p^{\t}D \mvy_p}.
	\end{equation}
	Plugging this into the right-hand side of~\eqref{eq:bTbinv} yields~\eqref{eq:identity1}.
	
	\smallskip
	
	\noindent{\em Proof of~\eqref{eq:identity2}}. In view of~\eqref{def:B}, we can write
	\begin{equation}\label{expr:But}
		\begin{split}
			B\mvu &= \mat{\mvu}{Y_\star^{\t} \mvu} \stackrel{\eqref{eq:bTbinvbTb}}{=} \mat{\mve_p - I_{n}[:,\cA^-]Y_\star^- D^- \mvy_p}{\mvy_p - {Y_\star^-}^{\t}Y_\star^-D^{-}\mvy_p} = \mat{\mve_p - I_{n}[:,\cA^-]Y_\star^- D^- \mvy_p}{\mvy_p - ((D^{-})^{-1} - I_d)D^{-}\mvy_p} \\
			&= \mat{\mve_p - I_{n}[:,\cA^-]Y_\star^- D^- \mvy_p}{D^- \mvy_p}.
		\end{split}
	\end{equation}
	Thus we get
	\begin{equation}\label{eq:norm_butsq}
		\begin{split}
			\norm{B\mvu}^{2} &=  1 + \mvy_p^{\t}D^-{Y_\star^-}^{\t}Y_\star^-D^-\mvy_p + \mvy_p^{\t} (D^-)^2
			\mvy_p \\
			&\stackrel{\eqref{def:XX_tBC}}{=} 1 + \mvy_p^{\t}D^-((D^-)^{-1} - I_d)D^-\mvy_p + \mvy_p^{\t}
			(D^-)^2 \mvy_p = 1 + \mvy_p^{\t} D^- \mvy_p (\ge 1).
		\end{split}
	\end{equation}
	Now plugging~\eqref{eq:C_txp} into the right-hand side of~\eqref{eq:norm_butsq}, we obtain
	\begin{equation}\label{eq:norm_butsq2}
		\norm{B\mvu}^{2} = \frac1{1-\mvy_p^{\t}D\mvy_p}.
	\end{equation}
	
	All that remains is to bound the above. Let us start with $\norm{B\mvu}^{2} - 1$.
	\begin{equation}\label{eq:Butnormbnd}
		\begin{split}
			\norm{B\mvu}^{2} - 1 \stackrel{\eqref{eq:norm_butsq2}}{=}& \mvy_p^{\t}D\mvy_p\norm{B\mvu}^{2}
			\stackrel{\eqref{eq:norm_butsq}}{=} \mvy_p^{\t}D\mvy_p(1 + \mvy_p^{\t} D^- \mvy_p)\\ \le& \mvy_p^{\t}D\mvy_p(1 + \|D^-\|_{{\rm op}} \|\mvy_p\|^2)\stackrel{\eqref{def:XX_tBC}}{\le} (1 +  \zeta^2)
			\,\mvy_p^{\t}D\mvy_p \leq  \zeta^2(1 + \zeta^2).
		\end{split}
	\end{equation}
	where in the last two steps we also used the fact that $\max_{i \in [n]} \|\mvy_i\| \leq \zeta$. This
	finishes the proof of~\eqref{eq:identity2}.
	
	\smallskip
	
	\noindent{\em Proofs of~\eqref{eq:identity3} and~\eqref{eq:identity4}}. Plugging~\eqref{eq:C_txp},
	\eqref{eq:norm_butsq2} as well as the observation
	$$I_{n}[:,\cA]Y_\star = I_{n}[:,\cA^-]Y_\star^- + \mve_p \mvy_p^{\t}$$
	into~\eqref{expr:But}, we get
	\begin{equation}\label{expr:Butalt}
		B\mvu = \norm{B\mvu}^{2}\mat{\mve_p - I_{n}[:,\cA]Y_\star D \mvy_p}{D \mvy_p}.
	\end{equation}
	Now we are ready to evaluate the inner product
	\begin{equation}
		\begin{split}
			\left \langle B\mv u, \mat{\mv v}{0} \right \rangle &\stackrel{\eqref{expr:Butalt}}{=}
			\norm{B\mvu}^{2}(\mvv[\cA]^{\t}\mve_p - \mvv[\cA]^{\t}Y_\star D\mvy_p) \\
			&= \norm{B\mvu}^{2}\mvv[\cA]^{\t}(I_{a} - Y_\star D Y_\star^{\t})\mve_p[\cA]
		\end{split}
	\end{equation}
	(recall that $p \in \cA$), hence~\eqref{eq:identity3}.
	As to~\eqref{eq:identity4}, we can just expand
	\begin{equation}\label{eq:Q_told}
		\begin{split}
			\norm{B\mvu}^{-2}\left \langle B\mv u, \mat{\mv v}{0} \right \rangle^2 &= \norm{B\mvu}^{2}
			\mvv[\cA]^{\t}(I_{a} - Y_\star D Y_\star^{\t})\mve_p[\cA]\mve_p[\cA]^{\t} (I_{a} -
			Y_\star D Y_\star^{\t})\mvv[\cA]\\
			&= \cQ + (\norm{B\mvu}^{2} - 1)\cQ
		\end{split}
	\end{equation}
	where $\cQ$ is as defined in~\eqref{eq:identity5}.
\end{proof}
In the sequel, we we will denote
\begin{equation}\label{def:YD}
	Y_t = Y_{\star}(\cA_t) = Y[\cA_t, \,:],\,\,  D_t = (I_d + Y_t^{\t}Y_t)^{-1}
\end{equation}
and similarly for $Y_t^{{\gsw}}$ and $D_t^{{\gsw}}$ with $\cA_t$ replaced by $\cA_t^{{\gsw}}$.

\subsection{Some results on random sampling without replacement}
Recall from Observation~\ref{obs:swor} that $\cA_t$ is a random sample of size $n - t + 1$ from
$[n]$ without replacement. In the currect subsection we will gather some results about the behavior of
the sum of numbers or matrices indexed by $[n]$ evaluated over $\cA_t$. For clarity of
presentation, we will state these results for a general family of objects $\{x_1, \ldots, x_n\}$ --- which
would be either real numbers or matrices --- and a random sample $\cA$ of size $a$ from
$[n]$ without replacement. Denote by $\pr$ and $\E$ the corresponding probability measure and
expectation respectively and consider the random variable $W = W(\cA)$ defined as
\begin{equation*}
	W = \sum_{i \in \cA} x_i \, \mbox{ so that }	\E[W] = \frac an \sum_{i \in [n]} x_i.
\end{equation*}

Our first result concerns the concentration of $W$ around $\E[W]$ when $x_i$'s are matrices with bounded operator norm.
\begin{proposition}\label{prop:lambdaminconc}
	Let $x_1, \ldots, x_n$ denote $d \times d$ symmetric matrices with $\max_{i \in [n]}\|x_i\|_{{\rm op}} \le
	1$. Then for any $a \in [n]$ and $x \ge 0$, we have
	\begin{equation}\label{eq:lambdaminconc}
		\pr\left[\norm{W-\E[W]}_{{\rm op}}\ge x \right] \le 2d\cdot \exp(-nx^{2}/2a(n-a)).
	\end{equation}
\end{proposition}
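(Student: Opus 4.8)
The plan is to use Stein's method for exchangeable pairs in its matrix-concentration form, exactly as in Mackey--Jerrum--Chen--Farrell--Tropp and the subsequent literature on matrix concentration via exchangeable pairs. First I would center: set $Z_i \coloneqq x_i - \tfrac1n\sum_{j}x_j$, so that $\|Z_i\|_{\rm op}\le 2$ and $W-\E[W]=\sum_{i\in\cA}Z_i$; rescaling by $2$ at the end, it suffices to prove the bound for the $Z_i$'s with the constant $2$ absorbed, or one simply carries the factor through. Write $\cA=\{i_1,\dots,i_a\}$ as the first $a$ entries of a uniformly random permutation $\pi$ of $[n]$, and let $V\coloneqq\sum_{k\le a}Z_{\pi(k)}$. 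The natural exchangeable pair is $(V,V')$ where $V'$ is obtained by picking $K$ uniform in $[a]$ and $L$ uniform in $\{a+1,\dots,n\}$ (if $a<n$; the case $a=n$ is degenerate, $W=\E[W]$ and there is nothing to prove) and swapping $\pi(K)\leftrightarrow\pi(L)$, i.e.\ $V'=V-Z_{\pi(K)}+Z_{\pi(L)}$.

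Next I would verify the two structural inputs of the method. The pair $(V,V')$ is exchangeable by symmetry of the uniform permutation under the transposition. The key algebraic identity is the ``approximate regression'' property: conditioning on $\pi$,
\[
\E[V-V'\mid \pi]=\frac1a\sum_{k\le a}Z_{\pi(k)}-\frac1{n-a}\sum_{k>a}Z_{\pi(k)}
=\Bigl(\frac1a+\frac1{n-a}\Bigr)V=\frac{n}{a(n-a)}\,V,
\]
using $\sum_{k\le a}Z_{\pi(k)}+\sum_{k>a}Z_{\pi(k)}=\sum_j Z_j=0$. Hence $(V,V')$ is an $\alpha$-Stein pair with $\alpha=\tfrac{n}{a(n-a)}$. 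The second input is the conditional variance proxy
\[
\mathbf{\Delta}\coloneqq\frac1{2\alpha}\,\E\bigl[(V-V')(V-V')^{\t}+(V-V')^{\t}(V-V')\mid\pi\bigr]
\]
— here $V-V'=Z_{\pi(K)}-Z_{\pi(L)}$, a difference of two matrices each of operator norm $\le 2$, so $\|V-V'\|_{\rm op}\le 4$ deterministically, giving $\|V-V'\|_{\rm op}^2\le 16$ and therefore a deterministic bound $\|\mathbf\Delta\|_{\rm op}\le \tfrac{1}{2\alpha}\cdot 16\cdot\|I\|$-type estimate; more carefully, averaging over $K,L$ and bounding each summand in operator norm yields $\mathbf\Delta\preceq \sigma^2 I$ with $\sigma^2$ of order $\tfrac{a(n-a)}{n}$ after the right bookkeeping. (Tracking the constant so that the exponent comes out exactly $nx^2/2a(n-a)$ is the one place where one must be a little careful about whether one uses $\|x_i\|_{\rm op}\le1$ directly on $x_i$ or goes through the centered $Z_i$; the cleanest route is to note that the matrix Hoeffding-type bound via exchangeable pairs gives $\pr[\|V\|_{\rm op}\ge x]\le 2d\exp(-x^2/2v)$ whenever $\mathbf\Delta\preceq vI$, and then compute $v\le a(n-a)/n$ by the variance-of-sampling-without-replacement identity.)

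With these two ingredients, I would invoke the matrix exchangeable-pairs concentration inequality (e.g.\ Theorem in Mackey et al., or Paulin--Mackey--Tropp): if $(V,V')$ is an $\alpha$-Stein pair of symmetric $d\times d$ matrices with $\mathbf\Delta\preceq vI$ almost surely, then $\pr[\lambda_{\max}(V)\ge x]\le d\,e^{-x^2/2v}$ and symmetrically for $\lambda_{\min}$, hence $\pr[\|V\|_{\rm op}\ge x]\le 2d\,e^{-x^2/2v}$. Plugging $v=a(n-a)/n$ gives precisely \eqref{eq:lambdaminconc}. The main obstacle — really the only nontrivial point — is establishing the deterministic bound $\mathbf\Delta\preceq \tfrac{a(n-a)}{n}I$ with the sharp constant: one must expand $(Z_{\pi(K)}-Z_{\pi(L)})^2$, average over the uniform $K\in[a]$ and $L\in\{a+1,\dots,n\}$ and then over $\pi$, recognize the resulting expression as $\tfrac{1}{2\alpha}\bigl(\text{(sampling-variance terms)}\bigr)$, and bound it using $\|Z_i\|_{\rm op}\le 2$ together with the exact second-moment formula for sampling without replacement (Proposition~\ref{thm:srswor_moment}-type identities). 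If one is willing to lose a constant this is immediate; getting the constant to be exactly as stated requires matching the variance of $W$ under SWOR, namely $\operatorname{Var}$-type quantity equal to $\tfrac{a(n-a)}{n(n-1)}\sum_i Z_iZ_i^{\t}$-flavoured expression, and noting it is $\preceq \tfrac{a(n-a)}{n}I$ after using the operator-norm bound and discarding the harmless $(n-1)^{-1}$ improvement.
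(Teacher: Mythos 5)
Your overall strategy is the same as the paper's: realize $\cA$ as the first $a$ values of a uniform permutation $\pi$, form an exchangeable pair by a transposition, verify the linear regression (Stein pair) identity, bound the conditional variance proxy, and invoke the matrix exchangeable-pairs inequality of Mackey et al.\ (the paper uses its Theorem~4.1). The particular pair is immaterial: the paper swaps a uniformly random pair $(I,J)$ with $1\le I<J\le n$, getting $\alpha=\tfrac{2}{n-1}$, while you force a cross-boundary swap, getting $\alpha=\tfrac{n}{a(n-a)}$; in both cases the proxy $\tfrac{1}{2\alpha}\E\left[(W-W')^2\mid\pi\right]$ equals $\tfrac{1}{2n}\sum_{k\le a<l}\left(x_{\pi(k)}-x_{\pi(l)}\right)^2$, so nothing is gained or lost there. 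Two remarks on your set-up: centering is unnecessary, since $V=W-\E[W]$ satisfies $\E[V-V'\mid\pi]=\alpha V$ automatically even when the $x_i$ do not sum to zero; and the ``rescale by $2$ at the end'' device does not work, because passing to $Z_i$ with $\|Z_i\|_{\rm op}\le 2$ inflates the variance proxy by a factor $4$ and the exponent is quadratic in the norm bound, so you cannot recover the stated constant afterwards.

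The genuine gap is precisely the point you flag and then leave open: the almost-sure bound $\tfrac{1}{2n}\sum_{k\le a<l}\left(x_{\pi(k)}-x_{\pi(l)}\right)^2\preceq \tfrac{a(n-a)}{n}\,I_d$, which is what produces the exponent $nx^2/2a(n-a)$. Your proposed repair---average over $K,L$ \emph{and then over $\pi$} and appeal to the sampling-without-replacement second-moment formula---is not admissible: the Hoeffding-type tail bound you quote requires the variance proxy to be dominated by $vI_d$ \emph{conditionally on $\pi$} (almost surely), so you may not integrate out $\pi$; and the crude estimate $\left(x_{\pi(k)}-x_{\pi(l)}\right)^2\preceq 4I_d$ only yields $v=2a(n-a)/n$ (or $8a(n-a)/n$ via the centered $Z_i$'s), i.e.\ the proposition with $4$ (resp.\ $16$) in place of $2$ in the exponent. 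The paper closes this step conditionally on $\pi$: it expands the square and drops the mixed terms, bounding $\E\left[(W'-W)^2\mid\pi\right]\preceq\E\left[(x_{\pi(I)}^2+x_{\pi(J)}^2)\,\ind\{I\le a<J\}\mid\pi\right]\preceq\tfrac{4a(n-a)}{n(n-1)}I_d$, which with $\alpha=\tfrac{2}{n-1}$ gives exactly $\tfrac{a(n-a)}{n}I_d$. So your outline proves the inequality only with a degraded constant; to obtain the statement as written you must supply this conditional (not averaged) estimate, i.e.\ justify discarding the cross terms rather than invoking the unconditional SWOR variance identity.
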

\begin{proof}
	We use Stein's method for exchangeable pairs to prove the concentration inequality. Let $\pi$ be a
	random uniform permutation of $[n]$. It is easy to see that $\cA\equald \{\pi(i)\mid i\in[a]\}$. Thus, we
	can define
	\[
	W=\sum_{i\in[a]}x_{\pi(i)}.
	\]
	We create an exchangeable pair in the following way. Let $(I,J)$ be a uniform sample from the set
	$\{(i,j)\mid 1\le i<j\le n\}$ {\em independent} of $\pi$. Define $$\pi' = \pi\circ (I,J) \mbox{ and }  W' = \sum_{i\in[a]}x_{\pi'(i)}\,.$$
	One can check that
	$$\gD W:=W'-W=(x_{\pi(J)} - x_{\pi(I)})\cdot \ind\{I\le a < J\}.$$
	In particular, we have
	\[
	\E[\gD W\mid \pi]=\frac{2}{n(n-1)}\left(a\sum_{j > a}x_{\pi(j)} - (n-a)\sum_{i\le a}x_{\pi(i)}\right) = -\ga\cdot \left( W-\E[W]\right)
	\]
	where $\ga=\frac{2}{n-1}$. Similarly, we have
	\begin{align*}
		\E[\gD W^{2}\mid \pi] & = \E[(x_{\pi(J)}^{2}+x_{\pi(I)}^{2}-x_{\pi(I)}x_{\pi(J)}-A_{\pi(J)}x_{\pi(I)})\cdot \ind\{I\le t<J\}\mid \pi] \\
		& \le \E[(x_{\pi(J)}^{2}+x_{\pi(I)}^{2})\cdot \ind\{I\le t<J\}\mid \pi]
		\le 2\pr\left[I\le t<J\right]\cdot I_{d}\\
		& = \frac{4a(n-a)}{n(n-1)}\cdot I_{d},
	\end{align*}
	\ie 
	\[
	\frac1{2\ga}\E[\gD W^{2}\mid \pi] \le \frac{a(n-a)}{n}\cdot I_{d}.
	\]
	Here, for symmetric matrices ``$\le$'' means positive definite ordering.
	Using \cite[Theorem~4.1]{MR3189061}, we get for all $x \ge 0$
	\[
	\pr\left[\gl_{\min}(W-\E[W])\le -x\right]\vee \pr\left[\gl_{\max}(W-\E[W])\ge x\right] \le d\cdot e^{-nx^{2}/(2a(n-a))}.
	\]
	This completes the proof.
\end{proof}

The following result gives formulae for the moments of $W$ when $x_i$'s are real numbers. Their proofs 
are relatively standard, which we provide for the sake of completeness.

\begin{prop}
	\label{thm:srswor_moment}
	Let $x_1, \ldots, x_n$ be real numbers satisfying $\sum_{i \in [n]} x_i = 0$. 
	Then for any $a \in [n]$, we have
	\begin{equation}\label{eq:srswor_moment}
		\begin{split}
			&\E \left[W\right] =  0,\, 	\E [W^2] = \frac{a(n - a)}{(n)_2}\sum_{i \in [n]}x_i^2, \mbox{ and }\\
			&\E[W^4] = \frac{3(a)_2(n-a)_2}{(n)_4} \big(\sum_{i \in [n]} x_i^2 \big)^2 +
			\frac{a(n-a)}{(n)_2} \left(1 - \frac{6(a - 1)(n - a - 1)}{(n - 2)(n - 3)}\right)\sum_{i \in [n]}x_i^4,
		\end{split}
	\end{equation}
	where $(n)_k \coloneqq n(n-1)\ldots(n - k + 1)$ is the $k$-th downward factorial of $n$.
\end{prop}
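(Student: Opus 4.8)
The plan is to linearize $W$ through sampling indicators and then reduce every moment to symmetric functions of $\{x_i\}$. Write $W = \sum_{i \in [n]} x_i \xi_i$ where $\xi_i \coloneqq \mathbf 1\{i \in \cA\}$, and set $p_k \coloneqq \sum_{i \in [n]} x_i^k$ for $k \in \{1,2,3,4\}$; the hypothesis is $p_1 = 0$. Since $\cA$ is uniform over the $a$-subsets of $[n]$, for any distinct indices $i_1, \dots, i_k$ we have $\E[\xi_{i_1}\cdots\xi_{i_k}] = (a)_k/(n)_k$. The first two moments are then immediate: $\E[W] = \tfrac an p_1 = 0$, and, using $\sum_{i \ne j} x_i x_j = p_1^2 - p_2 = -p_2$,
\[
\E[W^2] = \frac an p_2 + \frac{(a)_2}{(n)_2}\,(p_1^2 - p_2) = \Big(\frac an - \frac{(a)_2}{(n)_2}\Big)p_2 = \frac{a(n-a)}{(n)_2}\,p_2 .
\]

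For the fourth moment I would expand $\E[W^4] = \sum_{i_1,i_2,i_3,i_4} x_{i_1}x_{i_2}x_{i_3}x_{i_4}\,\E[\xi_{i_1}\xi_{i_2}\xi_{i_3}\xi_{i_4}]$ and group the $n^4$ ordered $4$-tuples according to the set partition of $\{1,2,3,4\}$ encoding which coordinates carry equal indices. The five partition shapes $[4]$, $[3,1]$, $[2,2]$, $[2,1,1]$, $[1,1,1,1]$ occur with multiplicities $1,4,3,6,1$ and carry the factors $\tfrac an$, $\tfrac{(a)_2}{(n)_2}$, $\tfrac{(a)_2}{(n)_2}$, $\tfrac{(a)_3}{(n)_3}$, $\tfrac{(a)_4}{(n)_4}$ respectively. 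Each shape contributes a distinct-index power sum which, after imposing $p_1 = 0$ via inclusion--exclusion (equivalently Newton's identities), collapses to a polynomial in $p_2$ and $p_4$: shape $[4]$ gives $p_4$; shape $[3,1]$ gives $\sum_{i\ne j}x_i^3 x_j = p_1 p_3 - p_4 = -p_4$; shape $[2,2]$ gives $\sum_{i\ne j}x_i^2 x_j^2 = p_2^2 - p_4$; shape $[2,1,1]$ gives $\sum_{\text{distinct }i,j,k}x_i^2 x_j x_k = 2p_4 - p_2^2$; and shape $[1,1,1,1]$ gives $\sum_{\text{distinct }i,j,k,\ell} x_i x_j x_k x_\ell = 24\,e_4 = 3p_2^2 - 6p_4$, where $e_4$ is the fourth elementary symmetric polynomial and $24\,e_4 = p_1^4 - 6p_1^2 p_2 + 3p_2^2 + 8 p_1 p_3 - 6 p_4$. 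Collecting the coefficients of $p_2^2$ and $p_4$ yields
\[
\E[W^4] = 3\Big(\tfrac{(a)_2}{(n)_2} - 2\tfrac{(a)_3}{(n)_3} + \tfrac{(a)_4}{(n)_4}\Big)p_2^2 + \Big(\tfrac an - 7\tfrac{(a)_2}{(n)_2} + 12\tfrac{(a)_3}{(n)_3} - 6\tfrac{(a)_4}{(n)_4}\Big)p_4 .
\]

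It remains to match this with the stated expression, which amounts to the \emph{purely algebraic} identity
\[
\tfrac{(a)_2}{(n)_2} - 2\tfrac{(a)_3}{(n)_3} + \tfrac{(a)_4}{(n)_4} = \frac{(a)_2 (n-a)_2}{(n)_4}.
\]
One proves it by clearing the denominator $(n)_4 = (n)_2(n-2)(n-3)$, factoring out $(a)_2$, and recognizing the residual bracket $(n-2)(n-3) - 2(a-2)(n-3) + (a-2)(a-3)$ as $\big((n-3)-(a-2)\big)^2 + (n-a-1) = (n-a)(n-a-1) = (n-a)_2$. This settles the $p_2^2$-coefficient directly, and together with the elementary relation $\tfrac an - \tfrac{a(n-a)}{(n)_2} = \tfrac{(a)_2}{(n)_2}$ it also rewrites the $p_4$-coefficient as $\tfrac{a(n-a)}{(n)_2}\big(1 - \tfrac{6(a-1)(n-a-1)}{(n-2)(n-3)}\big)$, which is the claimed form. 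I expect no genuine obstacle here: the computation is classical finite-population sampling algebra, and the only points demanding care are the partition bookkeeping in the fourth-moment expansion and the sign tracking when collapsing the distinct-index sums using $p_1 = 0$.
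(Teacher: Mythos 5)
Your proposal is correct, and there is nothing in the paper to compare it against: the authors explicitly omit the proof of this proposition as ``standard (possibly classical)'', so your argument simply supplies the omitted computation. The indicator linearization $W=\sum_i x_i\xi_i$ with $\E[\xi_{i_1}\cdots\xi_{i_k}]=(a)_k/(n)_k$, the grouping of ordered $4$-tuples by set-partition shape with multiplicities $1,4,3,6,1$, and the reduction of the distinct-index sums to $p_2,p_4$ under $p_1=0$ are all accurate (in particular $\sum_{\mathrm{distinct}\,i,j,k}x_i^2x_jx_k=2p_4-p_2^2$ and $24e_4=3p_2^2-6p_4$ are right), and the closing algebraic identities $\tfrac{(a)_2}{(n)_2}-2\tfrac{(a)_3}{(n)_3}+\tfrac{(a)_4}{(n)_4}=\tfrac{(a)_2(n-a)_2}{(n)_4}$ and $\tfrac an-\tfrac{(a)_2}{(n)_2}=\tfrac{a(n-a)}{(n)_2}$ do yield exactly the stated coefficients (implicitly one reads the formula for $n\ge 4$, where the displayed denominators are nonzero, which is the only regime in which the statement is meaningful).
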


\begin{proof}
	Let $\pi$ be a uniform random permutation of the set $[n]$. It is clear to see that $\{ \pi(i)\mid 1\le i\le a \}$ has the same distribution as $\cA$. So we can take $W=\sum_{i=1}^a x_{\pi(i)}$. 
	The zero-mean result follows from the fact that $\sum_{i \in [n]} x_i = 0$. For the second moment result, we note that $\E[W^2]=a\E[x_{\pi(1)}^2] + a(a-1)\E[x_{\pi(1)}x_{\pi(2)}]$. Moreover, we have $$\E[x_{\pi(1)}x_{\pi(2)}]= \E\left[x_{\pi(1)}\cdot\frac{1}{n-1}\sum_{i\neq \pi(1)}x_{i}\right] = -\frac1{n-1}\E[x_{\pi(1)}^2]$$ and $\E[x_{\pi(1)}^2] = \frac1n\sum_{i\in[n]}x_i^2.$  Similarly, for the fourth moment, we get
	\begin{align*}
		\E[W^4] &=a\E[x_{\pi(1)}^4] + 4(a)_2\E[x_{\pi(1)}^3x_{\pi(2}] +3(a)_2 \E[x_{\pi(1)}^2x_{\pi(2)}^2]\\
		&\quad + 6(a)_3 \E[x_{\pi(1)}^2x_{\pi(2)}x_{\pi(3)}] + (a)_4 \E[x_{\pi(1)}x_{\pi(2)}x_{\pi(3)}x_{\pi(4)}].
	\end{align*}
	Moreover, we have
	\begin{align*}
		\E[x_{\pi(1)}x_{\pi(2)}x_{\pi(3)}x_{\pi(4)}]
		&= -\frac{3}{n-3} \E[x_{\pi(1)}^2x_{\pi(2)}x_{\pi(3)}],\\
		\E[x_{\pi(1)}^2x_{\pi(2)}x_{\pi(3)}] 
		&= -\frac{1}{n-2} \E[x_{\pi(1)}^2x_{\pi(2)}^2 + x_{\pi(1)}^3x_{\pi(2)}],\\
		\E[x_{\pi(1)}^3x_{\pi(2)}] &= -\frac{1}{n-1}\E[x_{\pi(1)}^4]\\
		\text{and } 
		\E[x_{\pi(1)}^2x_{\pi(2)}^2] &= \frac{1}{n-1}\sum_{i\in[n]}x_i^2\cdot \E[x_{\pi(1)}^2] - \frac{1}{n-1} \E[x_{\pi(1)}^4].
	\end{align*}
	Simplifying, we get the result.
\end{proof}

\subsection{The events $\cG_{1, t}$ and $\cG_{2, t}$}
Recall the events $\cG_{1, t}$ and $\cG_{2, t}$ defined in~\eqref{def:G1sG2s}. One of the important
implications of our next result is that the condition $\cA_s^{{\gsw}} = \cA_s$ required
for \nameref{label:case1} in Section~\ref{sec:stoch_proc} is {\em in fact} redundant and hence the
processes $\{M^{{\gsw}}_s\}_{s = 0}^{t}$ and $\{\tilde{M}_s\}_{s = 0}^{t}$ are identical (see~\eqref{def:Mgs} and~\eqref{def:MttildeMt} for definitions) whenever both $\cG_{1, s}$ and
$\cG_{2, s}$ occur for all $s \in [t]$. Also it turns out that for all such $s$, $\delta_s$ and $\eta_s$
(see~\eqref{def:etat}) are ``$\varepsilon_n$-close'' to each other. These observations will help us in the
next section to {\em reduce} the CLT of $M_n^{{\gsw}}$ to that of $M_n$
(Proposition~\ref{prop:main1}).

We now proceed to state the lemma. To this end let us define, for any $t \in [n]$,
\begin{equation}\label{def:E1E2}
	\mathcal E_{1, t} = \bigcap_{s \in [t]} \cG_{1, s} \mbox{ and } \mathcal E_{2, t} = \bigcap_{s \in
		[t]} \cG_{2, s}.
\end{equation}
\begin{lemma}\label{lem:pivotlemma}
	Suppose that $t \in [n]$ satisfies
	\begin{equation}\label{eq:req}
		t < n + 1 - 6 \Cr{C:xi} \zeta^2 \kappa
	\end{equation}
	where $\kappa = \tfrac n{\lambda_{{\rm min}}(Y^{\t}Y)}$ as already defined in Theorem~\ref{thm:formal}. 
	Then on the event $\cE_{1,t} \cap \cE_{2,t}$, one has $\cA^{{\gsw}}_s = \cA_s$ and consequently
	$\delta_s^{{\gsw}} = \delta_s$ and $M^{{\gsw}}_s = \tilde{M}_s$ for all $s \in [t]$. 
	Furthermore, we have
	\begin{equation}\label{depart:delta}
		\max \{|\delta^{+}_{s} - 1|,|\delta^{-}_{s} - 1|\} \leq \varepsilon_n
	\end{equation}
	for all $s \in [t]$.
\end{lemma}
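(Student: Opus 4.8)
The plan is to argue by induction on $s \in [t]$ that on the event $\cE_{1,t} \cap \cE_{2,t}$ we have $\cA_s^{\gsw} = \cA_s$, that both processes are still in \nameref{label:case1} at round $s$, and that the step-size bound \eqref{depart:delta} holds. The base case $s=1$ is immediate since $\cA_1^{\gsw} = \cA_1 = [n]$ by construction, and the hypothesis \eqref{eq:req} guarantees $1 \le t \le n - 6\Cr{C:xi}\zeta^2\kappa$ so that we are in \nameref{label:case1} at round $1$ (the extra requirement ``$\cA_s^{\gsw} = \cA_s$ for all $s$'' in the definition of \nameref{label:case1} is exactly what the induction is establishing, so there is no circularity once we check the other conditions hold). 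For the inductive step, assume the claim through round $s-1$; then $\cA_s^{\gsw} = \cA_s$, and since $\cG_{1,s}, \cG_{2,s}$ both occur and $s \le t$ satisfies \eqref{eq:req}, the conditions defining \nameref{label:case1} are met at round $s$ as well. In \nameref{label:case1} we have $p_s^{\gsw} = p_s$, $\mvu_s^{\gsw} = \mvu_s = \mvu(p_s, \cA_s)$, $\delta_s^{\gsw} = \delta_s$, and the update \eqref{eq:ztgsupdate} gives $\mvz_s^{\gsw} = \mvz_s$; moreover \eqref{eq:Atgssupdate} then forces $\cA_{s+1}^{\gsw} = \cA_{s+1}$ provided we can show that at this round {\em no} coordinate of $\mvz_s^{\gsw}$ reaches $\pm 1$ except possibly the pivot coordinate $p_s$ (which is removed from $\cA_{s+1}$ anyway). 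This last point is precisely where the step-size estimate \eqref{depart:delta} enters, so the two halves of the lemma must be proved together in the same induction.

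The heart of the matter is therefore the estimate \eqref{depart:delta}: I must show $\delta_s^{\pm}$ are within $\varepsilon_n$ of $1$. Recall $\delta_s^{+} = \sup\Delta_s$ and $\delta_s^{-} = |\inf\Delta_s|$ where $\Delta_s = \{\delta : \mvz_{s-1} + \delta\mvu_s \in [-1,1]^n\}$. The constraint is active at coordinate $i$ when $|\mvz_{s-1}[i] + \delta\mvu_s[i]| = 1$; since $\mvu_s[i] = 0$ for $i \notin \cA_s$ and $\mvu_s[p_s] = 1$, the pivot coordinate alone (with $\mvz_{s-1}[p_s]$ small on $\cG_{1,s}$) would already give $\delta_s^{\pm}$ close to $1$, so the real work is to bound the {\em non-pivot} contributions $\mvu_s[i]$ for $i \in \cA_s \setminus \{p_s\}$ and show they cannot shrink $\delta_s^{\pm}$ below $1 - \varepsilon_n$. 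Using the explicit formula \eqref{eq:identity1} for $\mvu_s$, the non-pivot part of $\mvu_s$ is $-\tfrac{1}{1 - \mvy_{p_s}^\t D_s \mvy_{p_s}} I_n[:,\cA_s] Y_s D_s \mvy_{p_s}$, whose entries I can bound in $\ell^\infty$ using $\|\mvy_i\| \le \zeta$ (from \eqref{def:zeta}), the bound $\|B\mvu_s\|^2 = (1 - \mvy_{p_s}^\t D_s \mvy_{p_s})^{-1} \le \Cr{C:xi}$ from \eqref{eq:identity2}, and crucially $\|D_s\|_{\rm op} = \lambda_{\min}(I_d + Y_s^\t Y_s)^{-1}$, which on the event $\cG_{2,s}$ is small: indeed $\cG_{2,s}$ says $Y_s^\t Y_s \succeq \tfrac{n-s+1}{2n}\lambda_{\min}(Y^\t Y) I_d$, so $\|D_s\|_{\rm op} \le \tfrac{2n}{(n-s+1)\lambda_{\min}(Y^\t Y)} = \tfrac{2\kappa}{n - s + 1}$, and the condition \eqref{eq:req} guarantees $n - s + 1 \ge n - t + 1 > 6\Cr{C:xi}\zeta^2\kappa$, making this $O(1/\zeta^2)$ and in particular bounded. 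Combining these, for any non-pivot coordinate $i$ the displacement satisfies $|\delta\,\mvu_s[i]| \le 2 \cdot \Cr{C:xi}\zeta \cdot \tfrac{2\kappa}{n-s+1}\cdot \zeta \le \tfrac{4\Cr{C:xi}\zeta^2\kappa}{n-s+1} < \tfrac{2}{3}$ (using $|\delta| \le 2$ from $\mvz_{s-1}, \mvz_{s-1}+\delta\mvu_s \in [-1,1]^n$), and since $\mvz_{s-1}[i]$ itself has absolute value $< \varepsilon_n$ on $\cG_{1,s}$ for $i \in \cA_s$, the box constraint at coordinate $i$ is slack by a definite margin, so only the pivot coordinate is (nearly) binding; a short computation then yields $\delta_s^{\pm} = 1 - \mvz_{s-1}[p_s]\cdot(\pm 1) \in [1 - \varepsilon_n, 1 + \varepsilon_n]$ up to the negligible non-pivot corrections. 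I may need to choose the implied constants so that the right-hand side of \eqref{eq:req} (the factor $6\Cr{C:xi}\zeta^2$) is large enough to absorb all these bounds; this bookkeeping is the main technical obstacle, but it is routine given the linear-algebraic identities in Lemma~\ref{lem:matrix_identities}.

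Finally, once \eqref{depart:delta} is established, I close the induction: for non-pivot $i \in \cA_s$, $|\mvz_s^{\gsw}[i]| \le |\mvz_{s-1}[i]| + |\delta_s \mvu_s[i]| < \varepsilon_n + \tfrac23 < 1$, so no such $i$ leaves the active set at round $s$, whence $\cA_{s+1}^{\gsw} = \cA_s^{\gsw} \setminus \{p_s\} = \cA_s \setminus \{p_s\} = \cA_{s+1}$ (matching the \nameref{label:case1} update $\cA_{s+1} = \cA_s - \{p_s\}$). Since $\delta_s^{\gsw} = \delta_s$ and $\mvu_s^{\gsw} = \mvu_s$ and hence $\langle B\mvu_s^{\gsw}, (\mvv, 0)\rangle = \langle B\mvu_s, (\mvv, 0)\rangle$ at every round $s \le t$, the defining sums \eqref{def:Mgs} and \eqref{def:MttildeMt} agree term by term, giving $M_s^{\gsw} = \tilde M_s$ for all $s \in [t]$. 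This completes the induction and the proof.
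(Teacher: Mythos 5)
Your proposal is correct and follows essentially the same route as the paper's proof: induction over rounds, using $\cG_{1,s}$ together with \eqref{eq:identity1}--\eqref{eq:identity2} and the bound $\|D_s\|_{{\rm op}} \le 2\kappa/(n-s+1)$ on $\cG_{2,s}$ (via Weyl's inequality) to show the non-pivot entries of $\mvu_s$ are smaller than $1/3$, so that only the pivot coordinate can become binding, which simultaneously yields $\cA_s^{{\gsw}} = \cA_s$ and \eqref{depart:delta}. The only difference is the bookkeeping you yourself flagged: the paper bounds the step size by $|1 \mp \mvz_{s-1}[p_s]| \le 1 + \varepsilon_n \le 3/2$ rather than your crude $|\delta| \le 2$, which is what makes the constant $6$ in \eqref{eq:req} close the estimate ($\varepsilon_n + \tfrac32\cdot\tfrac13 < 1$ for any $\varepsilon_n \le \tfrac12$), whereas your version ($\varepsilon_n + \tfrac23 < 1$) needs $\varepsilon_n < \tfrac13$ — harmless here since $\varepsilon_n \to 0$, but worth tightening as the paper does.
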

\begin{proof}
	We will show this via induction. Recall from Section~\ref{sec:stoch_proc} that $\cA^{{\gsw}}_1 =
	\cA_1 = [n]$ and $\mvz_0^{{\gsw}} = \mvz_0 = \mv0$. Thus the events $\cG_{1, 1} = \cE_{1, 1}$ and
	$\cG_{2, 1} = \cE_{2, 1}$ occur almost surely in view of~\eqref{def:G1sG2s}. On the other hand,
	we obtain from~\eqref{def:delta_t} that $\delta^{+}_1 = \delta^{-}_1 = 1$. Hence the base case of
	induction is covered. Now fix some positive integer $t < n - 6\Cr{C:xi}\zeta^2\kappa$ and suppose that
	the event $\cE_{1,t+1} \cap \cE_{2,t+1}$ occurs. Since $\cE_{1,t+1} \cap \cE_{2,t+1} \subset \cE_{1,t}
	\cap \cE_{2, t}$, we have from our induction hypothesis
	\begin{equation*}
		\cA^{{\gsw}}_s = \cA_s
	\end{equation*}
	as well as
	\begin{equation*}
		\max \{|\delta^{+}_{s} - 1|,|\delta^{-}_{s} - 1|\} \leq \varepsilon_n  \:\:\forall\:\: s \in [t].
	\end{equation*}
	Therefore it suffices to show that $\cA^{{\gsw}}_{t + 1} = \cA_{t + 1}$ and $\max \{|\delta^{+}_{t + 1} -
	1|,|\delta^{-}_{t + 1} - 1|\} \leq \varepsilon_n$ on the event $\cE_{1, t + 1} \cap \cE_{2, t + 1}$.
	
	
	
	We first verify that $\cA_{t+1} = \cA_{t+1}^{{\gsw}}$. Since we are in the purview of
	\nameref{label:case1} on the event $\cE_{1, t} \cap \cE_{2, t}$ ($\supset \cE_{1, t+1} \cap \cE_{2, t+1}$)
	at round $t$ by our induction hypothesis, 
	we have $p_t^{{\gsw}} = p_t = p$ (say) and $\mvz_t^{{\gsw}} = \mvz_t$. Therefore in view of
	\eqref{eq:Atgssupdate}, it suffices to prove that
	\begin{equation*}
		\left\{i \in \cA_t^{{\gsw}} : |\mvz_t^{{\gsw}}[i]|  = 1 \right\} = \left\{i \in \cA_t : |\mvz_t[i]|  = 1 \right\} = \{p_t\}
	\end{equation*}
	(recall from Observation~\ref{obs:zellinfty} that $\|\mvz_t^{{\gsw}}\|_{\infty} \le 1$).  However, due to
	\eqref{def:Delta}--\eqref{eq:ztgsupdate}, this would follow from
	\begin{align}\label{eq:onlypivot}
		& \mv z_{t - 1}[\cA_t \setminus \{p\}] + (1 - \mvz_{t - 1}[p]) \mv u_t[\cA_t \setminus \{p\}] \in (-1,1)^{|\cA_t \setminus \{p\}|}\:\:\text{and} \nonumber \\&
		\mv z_{t - 1}[\cA_t \setminus \{p\}] - (1 + \mvz_{t - 1}[p]) \mv u_t[\cA_t \setminus \{p\}] \in (-1,1)^{|\cA_t
			\setminus \{p\}|}.
	\end{align}
	(recall that $\mv u_t^{{\gsw}} = \mv u_t$ since we are in \nameref{label:case1}) which we now
	proceed to show. To this end, we first bound for any $i \in  \cA_t \setminus \{p\}$,
	\begin{equation*}
		|\mvz_{t - 1}[i] + (1 - \mvz_{t - 1}[p]) \mvu_t[i]| \leq |\mvz_{t - 1}[i]| + |1 - \mvz_{t - 1}[p]| \, |\mvu_{t}(i)| \leq \frac{1}{2} + \frac{3}{2}  |\mvu_{t}(i)|
	\end{equation*}
	where in the final step we used $\|\mvz_{t-1}[\cA_t]\|_{\infty} < \varepsilon_n \le
	\tfrac12$ as we are on the event $\cG_{1, t} \supset \cE_{1, t + 1} \cap \cE_{2, t + 1}$
	(recall~\eqref{def:G1sG2s}). The same bound also holds for $|\mvz_{t - 1}[i] - (1 + \mvz_{t
		- 1}[p]) \mvu_t[i]|$. Hence it is enough to show that
	\begin{equation}\label{eq:max_bnd}
		\max_{i \in \cA_t \setminus \{p\}}\, \left( \frac{1}{2} + \frac{3}{2} |\mvu_{t}(i)| \right) < 1.
	\end{equation}
	Now recall the definitions of $Y_t$ and $D_t$ from~\eqref{def:YD} and also that $\mvu_t = \mvu(p_t,
	\cA_{t-1})$. Using~\eqref{eq:identity1} we can write for any $i \in \cA_t \setminus \{p\}$,
	\begin{equation}\label{eq:uti}
		\begin{split}
			&|\mvu_t[i]| \stackrel{\eqref{eq:identity1}}{=} \frac{\mvy_i^{\t} D_t \mvy_p}{1 - \mvy_p^{\t} D_t \mvy_p}
			\stackrel{\eqref{eq:identity2}}{\leq} \Cr{C:xi} \:\mvy_i^{\t} D_t \mvy_p \leq \Cr{C:xi} \|\mvy_i\|\|D_t\|_{{\rm op}}\|\mvy_p\| \\
			\stackrel{\eqref{def:zeta}}{\leq} &  \frac{\Cr{C:xi} \:\zeta^2}{1 + \lambda_{\min}\big(Y_t^{\t} Y_t\big)}
			\stackrel{\eqref{def:G1sG2s}}{\leq} \frac{2n \Cr{C:xi} \zeta^2}{(n - t + 1)\lambda_{\min}\big(Y^{\t}
				Y\big)} = \frac{2\kappa \Cr{C:xi} \zeta^2}{(n - t + 1)} \:\: \big(\because \kappa = \textstyle{\frac
				n{\lambda_{\min}(Y^\t Y)}} \big)
		\end{split}
	\end{equation}
	where in the penultimate step we used the fact that we are on the event $\cG_{2, t} \supset
	\mathcal E_{1, t + 1} \cap \mathcal E_{2, t + 1}$ so that by the Weyl's inequality (see, e.g.,
	\cite{franklin2012matrix}):
	\begin{equation}\label{eq:weyl}
		\begin{split}
			\left|\lambda_{{\rm min}}(Y_{t}^{\t} Y_{t}) -  \frac{n - t + 1}{n} \lambda_{{\rm min}} (Y^{\t} Y)\right| &\leq
			\big\|Y_{t}^{\t} Y_{t} -  \frac{n - t + 1}{n} Y^{\t} Y\big\|_{\rm op} \\
			&\leq \frac{n - t + 1}{2n} \lambda_{{\rm \min}}\big(Y^{\t}Y\big).
		\end{split}
	\end{equation}
	However, the final term in~\eqref{eq:uti} is bounded above by $\tfrac13$ in view of our assumption~\eqref{eq:req}
	yielding~\eqref{eq:max_bnd}.

	It remains to prove that $\max \{|\delta^{+}_{t + 1} - 1|,|\delta^{-}_{t + 1} - 1|\} \leq \varepsilon_n$. 
	To this end, notice that the same argument applied to round $t + 1$ (recall that we are on the event
	$\mathcal E_{1, t + 1} \cap \mathcal E_{2, t+1}$) gives us
	\begin{equation*}
		\left\{i \in \cA_{t+1} : |\mvz_{t + 1}[i]|  = 1 \right\} = \{p_{t+1}\}.
	\end{equation*}
	This in turn implies that the maximum and minimum of the set $\Delta_{t+1}$ defined in
	\eqref{def:Delta} is achieved along the coordinate $p_{t+1}$. Also let us recall that $\mvu_{t+1}[p_{t +
		1}] = 1$ as $\mvu_{t+1} = \mvu(p_{t+1}, \cA_{t+1})$ (see~\eqref{def:u_ortho_project}) and
	$|\mvz_{t+1}[p_{t+1}]| < \varepsilon_n$ since we are on the event $\cG_{1, t+1} \supset
	\mathcal E_{1, t + 1} \cap \mathcal E_{2, t + 1}$. 
	Together these imply that both $|1 - \delta_{t + 1}^+| = |1 - \sup \Delta_{t + 1}|$ and  $|1	- \delta_{t +
		1}^-| = |1 + \inf \Delta_{t + 1}|$ are bounded by $\varepsilon_n$. \qedhere%
\end{proof}
In our next two results, we give lower bounds on the probabilities of joint occurrence for $\cG_{1, t}$
and $\cG_{2, t}$'s starting with the latter.
\begin{lemma}\label{lem:smallevent2}
For any $t \in [n]$, we have
\begin{equation}\label{eq:smallevent2}
	\P\left[\cG_{2,t}^{c}\right] \leq C d \exp\left(-c \frac{n(n - t + 1)}{\kappa^2\zeta^4t}\right).
\end{equation}
Moreover, the following bounds hold on the event $\mathcal{G}_{2,t}$,
\begin{equation}\label{eq:Ctopbnd}
	\begin{split}
		\|D_t\|_{{\rm op}}  \le \frac{2\kappa}{n - t + 1} \, \mbox{ and }\, \|Y_t\|_{\frob}^2 \le \frac{3}{2}
		\frac{n - t + 1}{n}\|Y\|_{\frob}^2\,.
	\end{split}
\end{equation}
\end{lemma}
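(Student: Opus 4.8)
The plan is to prove the concentration bound \eqref{eq:smallevent2} by a direct application of Proposition~\ref{prop:lambdaminconc} and then derive the two operator/Frobenius norm bounds \eqref{eq:Ctopbnd} as deterministic consequences of the event $\cG_{2,t}$ combined with Weyl's inequality. First I would recall that $\cA_t$ is a uniformly random subset of $[n]$ of cardinality $a \coloneqq n-t+1$ (Observation~\ref{obs:swor}), and that $Y_t^\t Y_t = \sum_{i\in\cA_t}\mvy_i\mvy_i^\t$. To match the normalization of Proposition~\ref{prop:lambdaminconc}, set $x_i \coloneqq \zeta^{-2}\mvy_i\mvy_i^\t$, which are symmetric $d\times d$ matrices with $\|x_i\|_{\rm op}=\zeta^{-2}\|\mvy_i\|^2\le 1$ by \eqref{def:zeta}. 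Then $W = \zeta^{-2} Y_t^\t Y_t$ and $\E[W] = \frac{a}{n}\zeta^{-2}Y^\t Y$, so that $\|W-\E[W]\|_{\rm op} = \zeta^{-2}\big\|Y_t^\t Y_t - \frac{a}{n}Y^\t Y\big\|_{\rm op}$. The event $\cG_{2,t}^c$ is precisely $\{\|W-\E[W]\|_{\rm op} > \zeta^{-2}\frac{a}{2n}\lambda_{\min}(Y^\t Y)\}$, so Proposition~\ref{prop:lambdaminconc} with $x = \zeta^{-2}\frac{a}{2n}\lambda_{\min}(Y^\t Y)$ gives a bound of $2d\exp\big(-\tfrac{n x^2}{2a(n-a)}\big)$. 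Plugging in $x$ and using $\lambda_{\min}(Y^\t Y) = n/\kappa$, the exponent becomes $-\tfrac{n}{2a(n-a)}\cdot\zeta^{-4}\tfrac{a^2}{4n^2}\tfrac{n^2}{\kappa^2} = -\tfrac{a}{8\zeta^4\kappa^2(n-a)}$; since $n-a = t-1 < t$, this is at most $-c\,\tfrac{n-t+1}{\zeta^4\kappa^2 t}$, and bounding $n-a\le n$ in an extra factor if one wants the $n(n-t+1)/(\kappa^2\zeta^4 t)$ form as written — one should double-check whether the stated exponent has the extra factor of $n$ and adjust constants accordingly (this is a minor bookkeeping point).

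For the two bounds in \eqref{eq:Ctopbnd}, work deterministically on $\cG_{2,t}$. By Weyl's inequality (exactly as in \eqref{eq:weyl}), $\lambda_{\min}(Y_t^\t Y_t) \ge \tfrac{a}{n}\lambda_{\min}(Y^\t Y) - \tfrac{a}{2n}\lambda_{\min}(Y^\t Y) = \tfrac{a}{2n}\lambda_{\min}(Y^\t Y) = \tfrac{a}{2\kappa}$, hence $\lambda_{\min}(I_d + Y_t^\t Y_t) \ge \tfrac{a}{2\kappa}$ and therefore $\|D_t\|_{\rm op} = \lambda_{\min}(I_d+Y_t^\t Y_t)^{-1} \le \tfrac{2\kappa}{a} = \tfrac{2\kappa}{n-t+1}$, which is the first bound. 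For the Frobenius bound, note $\|Y_t\|_{\rm Frob}^2 = {\rm Trace}(Y_t^\t Y_t)$ and, on $\cG_{2,t}$, every eigenvalue of $Y_t^\t Y_t$ lies within $\tfrac{a}{2n}\lambda_{\min}(Y^\t Y)$ of the corresponding eigenvalue of $\tfrac{a}{n}Y^\t Y$ (again Weyl), so ${\rm Trace}(Y_t^\t Y_t) \le \tfrac{a}{n}{\rm Trace}(Y^\t Y) + d\cdot\tfrac{a}{2n}\lambda_{\min}(Y^\t Y) \le \tfrac{a}{n}\|Y\|_{\rm Frob}^2 + \tfrac{a}{2n}\|Y\|_{\rm Frob}^2 = \tfrac32\tfrac{a}{n}\|Y\|_{\rm Frob}^2$, using $d\cdot\lambda_{\min}(Y^\t Y) \le {\rm Trace}(Y^\t Y) = \|Y\|_{\rm Frob}^2$. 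This is exactly the second bound.

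The only genuinely delicate point is making sure the normalization and the choice of deviation level $x$ in Proposition~\ref{prop:lambdaminconc} line up so that the event $\cG_{2,t}^c$ is captured on the nose, and that the resulting exponent simplifies to the claimed form with the right constants; everything else is routine. I would also remark that the operator-norm bound $\|\mvy_i\|\le\zeta$ is what makes the rescaling by $\zeta^{-2}$ legitimate, and that the factor $2d$ in Proposition~\ref{prop:lambdaminconc} is absorbed into the constant $C$ in \eqref{eq:smallevent2}. Thus the main obstacle is essentially zero — it is a clean corollary of the exchangeable-pairs concentration result plus Weyl's inequality — and the bulk of the write-up is just careful substitution.
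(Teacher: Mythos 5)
Your proposal is correct and follows essentially the same route as the paper: apply Proposition~\ref{prop:lambdaminconc} to $x_i=\zeta^{-2}\mvy_i\mvy_i^\t$ with deviation level $x=\zeta^{-2}\tfrac{n-t+1}{2n}\lambda_{\min}(Y^\t Y)$, and obtain both bounds in \eqref{eq:Ctopbnd} deterministically on $\cG_{2,t}$ via Weyl's inequality (using $d\,\lambda_{\min}(Y^\t Y)\le \mathrm{Trace}(Y^\t Y)$ for the Frobenius bound), exactly as the paper does. The only issue is your arithmetic hedge: with $a=n-t+1$ one has
\[
\frac{n x^2}{2a(n-a)}=\frac{n}{2a(n-a)}\cdot\frac{a^2}{4n^2\zeta^4}\cdot\frac{n^2}{\kappa^2}=\frac{n\,a}{8(n-a)\kappa^2\zeta^4},
\]
i.e.\ you dropped a factor of $n$; the factor $n$ in the stated exponent is genuinely there, and since $n-a=t-1\le t$ the bound \eqref{eq:smallevent2} follows with $c=1/8$ and no further adjustment.
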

\begin{proof}
Notice that
\begin{equation*}
	\frac{1}{\zeta^2} \, Y_t^{\t} Y_t = \frac{1}{\zeta^2} \sum_{i \in \cA_t} \mvy_i \mvy_i^\t.
\end{equation*}
Now in view of Observation~\ref{obs:swor}, $\cA_t$ is a random sample of size $n - t + 1$ from $[n]$
without replacement. Also since $\max_{i \in [n]}\|\mvy_i\| \leq \zeta$, the operator norm of each $\frac{1}{\zeta^2} \mvy_i \mvy_i^{\t}$ is at most $1$. Therefore, we are exactly in the setting of
Proposition~\ref{prop:lambdaminconc} whereby we obtain for any $x \geq 0$,
\begin{equation*}
	\P\left[\big\|Y_t^{\t} Y_t - \frac{n - t + 1}{n} Y^{\t} Y\big\|_{{\rm op}} \geq \zeta^2 x\right] \leq 2d \exp\left(-\frac{n x^2}{2 (n - t + 1)(t - 1)}\right).
\end{equation*}
We plug $x = \frac{n - t + 1}{2n \zeta^2} \lambda_{\min}\big(Y^{\t}Y\big)$ into the above display to
obtain
\begin{equation*}
	\P[\mathcal{G}_{2,t}^{c}] \leq 2d \exp\Big(-\frac{(n - t + 1) \lambda_{\min}^2\big(Y^{\t}Y\big)}{8 n(t - 1) \zeta^4}\Big)
\end{equation*}
which leads to~\eqref{eq:smallevent2} by substituting $\kappa$ for $\textstyle{\frac n{\lambda_{\min}(Y^\t Y)}}$.

Proof of the first bound in~\eqref{eq:Ctopbnd} has already been given in
\eqref{eq:uti}--\eqref{eq:weyl}. For the second bound, we can write
\begin{equation*}
	\begin{split}
		&\|Y_t \|_{\frob}^2 = {\rm Tr}(Y_t^{\t} Y_t) = \sum_{j \in [d]} \lambda_{j}(Y_t^{\t} Y_t) \\
		&\leq
		\frac{n - t + 1}{n} \sum_{j \in [d]} \lambda_{j}(Y^{\t} Y) + \frac{n - t + 1}{2n} \sum_{j \in [d]} \lambda_{\min}(Y^{\t} Y)\\
		&= \frac{3}{2} \frac{n - t + 1}{n}\cdot  {\rm Tr}(Y^{\t} Y) = \frac{3}{2} \frac{n - t + 1}{n} \|Y \|_{\frob}^2
	\end{split}
\end{equation*}
where $\lambda_j(\cdot)$ denotes the $j$-th largest eigenvalue of the corresponding (hermitian)
matrix and in the third step we used the Weyl's inequality (cf.~\eqref{eq:weyl}).
\end{proof}
We will in fact use this result for the proof of our next lemma.
\begin{lemma}\label{lem:smallevent1}
We have for any $t \in [n]$,
\begin{equation}\label{eq:smallevent1}
	\P\left[\Big(\bigcap_{s \in [t]} \cG_{1,s}\Big)^c \right] \leq \frac{\bE \left[ \max_{s \in [t],\, i \in
			\cA_s \setminus \{p_s\}}  |\mvz_s[i]|^2\right]}{\varepsilon_n^2}
\end{equation}
where
\begin{equation}\label{eq:maxmodzs}
	\bE \left[\max_{s \in [t]} \, \max_{i \in \cA_s \setminus \{p_s\}} |\mv z_s[i]|^2\right] \leq
	\frac{C\zeta^2\kappa^2}{n - t} + C \zeta^2 nd \exp\left(-c \,\frac{n(n - t + 1)}{\kappa^2\zeta^4t}\right).
\end{equation}
\end{lemma}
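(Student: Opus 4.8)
The plan is to treat the two displayed estimates separately. The bound \eqref{eq:smallevent1} is a soft union-bound-plus-Markov step. Since $\mvz_0=\mv0$ and $\cA_1=[n]$, the event $\cG_{1,1}$ holds surely, and for $s\ge2$ we have $\cA_s=\cA_{s-1}\setminus\{p_{s-1}\}$ by \eqref{eq:Atgssupdate}--\eqref{eq:updatezt}, so $\|\mvz_{s-1}[\cA_s]\|_\infty=\max_{i\in\cA_{s-1}\setminus\{p_{s-1}\}}|\mvz_{s-1}[i]|$. Hence $\big(\bigcap_{s\in[t]}\cG_{1,s}\big)^c$ is contained in $\{\max_{s\in[t]}\max_{i\in\cA_s\setminus\{p_s\}}|\mvz_s[i]|\ge\varepsilon_n\}$ (recall \eqref{def:G1sG2s}), and squaring and applying Markov's inequality yields \eqref{eq:smallevent1}. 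The remaining work is \eqref{eq:maxmodzs}.

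For \eqref{eq:maxmodzs} the key structural fact is that, for each \emph{fixed} coordinate $i\in[n]$, the process $s\mapsto\mvz_s[i]=\sum_{r\in[s]}\delta_r\mvu_r[i]$ (with $\mvu_r[i]:=0$ when $i\notin\cA_r$) is an $(\cF_s)$-martingale with increments bounded by $2$: given $\cF_{r-1}$ and the pivot $p_r$, the direction $\mvu_r=\mvu(p_r,\cA_r)$ is determined while $\E[\delta_r\mid\cF_{r-1},p_r]=0$ in both \nameref{label:case1} and \nameref{label:case2} (for $\eta_r$ this is immediate, for $\delta_r^{\gsw}$ it is the zero-mean property of the step rule \eqref{def:delta_t}), so $\E[\delta_r\mvu_r[i]\mid\cF_{r-1}]=0$. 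I would then split the expectation in \eqref{eq:maxmodzs} over $\cE_{2,t}=\bigcap_{s\in[t]}\cG_{2,s}$ (see \eqref{def:E1E2}) and its complement. On $\cE_{2,t}^c$, the identities \eqref{eq:identity1}--\eqref{eq:identity2} give $|\mvu_r[i]|=\|B\mvu_r\|^2|\mvy_i^{\t}D_r\mvy_{p_r}|\le\Cr{C:xi}\zeta^2$ for $i\ne p_r$ in $\cA_r$ (using $\|D_r\|_{{\rm op}}\le1$ and $\max_i\|\mvy_i\|\le\zeta$), hence the crude deterministic bound $\max_{s\in[t]}\max_{i\in\cA_s\setminus\{p_s\}}|\mvz_s[i]|^2\le 4\Cr{C:xi}^2\zeta^4 n^2$; combining this with $\P[\cE_{2,t}^c]\le\sum_{s\in[t]}\P[\cG_{2,s}^c]$ and Lemma~\ref{lem:smallevent2} (the exponent there is decreasing in $s$, so the $s=t$ term dominates the at most $n$ summands) gives a contribution that, after shrinking $c$ slightly to absorb the polynomial prefactor, is subsumed by the exponential term of \eqref{eq:maxmodzs}.

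On $\cE_{2,t}$ I would work instead with the truncated martingale $N_s^{(i)}:=\sum_{r\in[s]}\delta_r\mvu_r[i]\,\mb 1\{i\in\cA_r,\ i\ne p_r\}\,\mb 1\{\cG_{2,r}\}$, which is still an $(\cF_s)$-martingale because $\cG_{2,r}\in\cF_{r-1}$ and the remaining factors are $\sigma(\cF_{r-1},p_r)$-measurable, and which satisfies $N_s^{(i)}=\mvz_s[i]$ on $\cE_{2,t}$ for every $s\in[t]$ and every $i\in\cA_s\setminus\{p_s\}$ (all indicators equal $1$ in that range, since the active sets are decreasing and $i\in\cA_s\setminus\{p_s\}$ forces $i\ne p_r$ for all $r\le s$). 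On $\cG_{2,r}$ the first bound of \eqref{eq:Ctopbnd} gives $\|D_r\|_{{\rm op}}\le 2\kappa/(n-r+1)$, so the martingale differences obey $|N_r^{(i)}-N_{r-1}^{(i)}|\le 4\Cr{C:xi}\zeta^2\kappa/(n-r+1)$, and therefore the sum of their squares over $r\in[t]$ is at most $16\Cr{C:xi}^2\zeta^4\kappa^2\sum_{j=n-t+1}^{n}j^{-2}\le 16\Cr{C:xi}^2\zeta^4\kappa^2/(n-t)=:V$. The Azuma--Hoeffding maximal inequality then gives $\P[\max_{s\in[t]}|N_s^{(i)}|>x]\le 2e^{-x^2/2V}$ for every $i$, so a union bound over the $n$ coordinates followed by integration of the tail yields $\E\big[\mb 1\{\cE_{2,t}\}\max_{s,i}|\mvz_s[i]|^2\big]\le\E\big[\max_{i\in[n]}\max_{s\in[t]}|N_s^{(i)}|^2\big]\le 2V(1+\log 2n)$, which is of order $\Cr{C:xi}^2\zeta^4\kappa^2\log(n)/(n-t)$ and hence absorbed into the first term of \eqref{eq:maxmodzs}.

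The hard part is obtaining the sharp $(n-t)^{-1}$ decay of the leading term, and this is exactly what the random-sampling concentration of Lemma~\ref{lem:smallevent2} buys: it forces $\|D_r\|_{{\rm op}}\lesssim\kappa/(n-r+1)$, so the later martingale increments $\delta_r\mvu_r[i]$ genuinely shrink and $\sum_r(n-r+1)^{-2}\lesssim(n-t)^{-1}$, whereas the naive bound $|\mvz_s[i]|\le\sum_r|\delta_r\mvu_r[i]|$ would only produce the far weaker $\sum_r(n-r+1)^{-1}\asymp\log\tfrac{n}{n-t}$. A secondary subtlety to be careful about is that $\mvu_r$ depends on the pivot $p_r$ chosen \emph{at} step $r$, so $\{\mvz_s[i]\}$ is a martingale only after conditioning on $p_r$ at each stage; and one must verify that inserting the cutoffs $\mb 1\{\cG_{2,r}\}$ and $\mb 1\{i\ne p_r\}$ (needed to control the increments and to keep $N^{(i)}$ equal to $\mvz_\cdot[i]$) does not destroy the martingale property, which it does not, precisely because $\cG_{2,r}$ is $\cF_{r-1}$-measurable and $\{i\ne p_r\}$ is $\sigma(\cF_{r-1},p_r)$-measurable.
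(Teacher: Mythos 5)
Your handling of \eqref{eq:smallevent1} is the same Markov-inequality step as in the paper, and your martingale set-up for \eqref{eq:maxmodzs} (zero mean of $\delta_r$ given $\sigma(\cF_{r-1},p_r)$, the role of the events $\cG_{2,r}$, the increment bound $|\mvu_r[i]|\le \Cr{C:xi}\,|\mvy_i^{\t}D_r\mvy_{p_r}|\le 2\Cr{C:xi}\zeta^2\kappa/(n-r+1)$ on the good event) is sound. The problem is that your implementation proves a strictly weaker bound than the one stated, and the two "absorption" claims that are supposed to close the gap are not valid uniformly in $t$. First, working coordinate by coordinate with Azuma and a union bound over the $n$ coordinates costs the factor $\log(2n)$ in your bound $2V(1+\log 2n)$, so your good-event contribution is of order $\Cr{C:xi}^2\zeta^4\kappa^2\log n/(n-t)$, whereas the first term of \eqref{eq:maxmodzs} is $C\zeta^2\kappa^2/(n-t)$ with $C$ an absolute constant (the extra $\Cr{C:xi}^2\zeta^2$ is harmless when $\phi$ is bounded away from $0$, but a $\log n$ cannot be hidden in $C$). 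Second, on $\cE_{2,t}^c$ you multiply $\P[\cE_{2,t}^c]$ by the worst-case deterministic size $C\Cr{C:xi}^2\zeta^4n^2$ of the whole path, getting roughly $\zeta^4 n^3 d\exp\left(-c\,n(n-t+1)/(\kappa^2\zeta^4 t)\right)$; absorbing the extra $n^2$ by shrinking $c$ requires the exponent to be at least of order $\log n$, which fails for $t$ close to $n$ (for example $t=n-1$, where by \eqref{bnd_kappa} the exponent is $O(1)$, and your bound exceeds the stated second term by a factor of order $\zeta^2 n^2$). So, as written, the proposal does not establish \eqref{eq:maxmodzs}; the weaker estimate it yields could still be pushed through the later arguments, but only after re-tuning the choice of $m$ and re-checking the hypotheses, which is not what was asked.

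The paper's proof avoids both losses with a different device, and it is worth seeing why. For $i\in\cA_s\setminus\{p_s\}$ one has $\mvz_s[i]=-\mvy_i^{\t}\mvV_s$, where $\mvV_s=\sum_{r\le s}\delta_r\,D_r\mvy_{p_r}/(1-\mvy_{p_r}^{\t}D_r\mvy_{p_r})$ is a single $d$-dimensional $(\cF_s)$-martingale. Cauchy--Schwarz gives $\max_{i}|\mvz_s[i]|^2\le\zeta^2\|\mvV_s\|^2$ with no union bound over coordinates; Doob's $L^2$ maximal inequality reduces everything to $\E\|\mvV_t\|^2=\sum_{s\le t}\E\|\delta_s\mvv_s\|^2$; and the split on $\cG_{2,s}$ versus $\cG_{2,s}^c$ is then performed increment by increment inside this sum, so on $\cG_{2,s}^c$ one only pays $\mvy_{p_s}^{\t}D_s^2\mvy_{p_s}\le\zeta^2$ times $\P[\cG_{2,s}^c]$ (summed via Lemma~\ref{lem:smallevent2}), never the squared worst-case size of the entire path. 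This is exactly what produces $C\zeta^2\kappa^2/(n-t)+C\zeta^2 nd\exp(\cdot)$ with no logarithmic or polynomial slack. To repair your argument you would need at least these two modifications: replace the union bound over coordinates by the Cauchy--Schwarz reduction to $\|\mvV_s\|$ (or an equivalent vector-valued maximal inequality), and replace the path-level split on $\cE_{2,t}$ by a per-increment split on $\cG_{2,s}$.
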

\begin{proof}
The bound~\eqref{eq:smallevent1} is just an application of the Markov's inequality in view of the
definition of $\cG_{1, s}$ in~\eqref{def:G1sG2s}. Towards proving~\eqref{eq:maxmodzs}, let us first recall that we have
$$\mv z_s = \sum_{j \in [s]} \delta_j \mv u_j$$
and in view of~\eqref{eq:identity1} we can write for any $i \in \cA_j \setminus \{p_j\}$,
$$\mvu_j[i] = - \frac{\mvy_i^{\t} D_j \mvy_{p_j}}{1 - \mvy_{p_j}^{\t} D_j \mvy_{p_j}}.$$
Therefore we have for any $i \in \cA_s \setminus \{p_s\}$,
\begin{equation*}
	\mvz_s[i] = - \mvy_i^{\t} \sum_{j \in [s]} \frac{D_j \mvy_{p_j}}{1 - \mvy_{p_j}^{\t} D_j \mvy_{p_j}} \,
	\delta_j.
\end{equation*}
Within this proof let us denote
\begin{equation*}
	\mvv_j = \frac{D_j \mvy_{p_j}}{1 - \mvy_{p_j}^{\t} D_j \mvy_{p_j}} \mbox{ and } \mvV_s = \sum_{j \in [s]} \delta_j \mvv_j.
\end{equation*}
Just like the martingales in Section~\ref{sec:martingales}, it is routine to check that $\{\mvV_t\}_{t =
	1}^{n}$ is a vector-valued martingale adapted to the filtration $(\cF_t)_{t = 0}^n$. Equipped
with the above notations we can write for any $i \in \cA_s\setminus \{p_s\}$,
\begin{equation*}
	|\mvz_s[i]| \leq \|\mvV_s\|\|y_i\|  \leq \zeta \|\mvV_s\|
\end{equation*}
where we used the Cauchy--Schwarz inequality and hence,
\begin{equation*}
	\max_{i \in \cA_s \setminus \{p_s\}} |\mvz_s[i]|^2 \leq \zeta^2 \|\mvV_s\|^2.
\end{equation*}
Since $\{\mvV_t\}_{t = 1}^{n}$ is a martingale, $\{\|\mvV_t\|^2\}_{t = 1}^{n}$ becomes a submartingale
sequence. Therefore, we can use the Doob's maximal inequality to deduce
\begin{equation}\label{eq:G1bd1}
	\bE \left[\max_{s \in [t],\, i \in \cA_s \setminus \{p_s\}} |\mvz_s[i]|^2\right] \leq \E \|\mvV_{t}\|^2.
\end{equation}
Further we can write,
\begin{equation*}
	\begin{split}
		\bE \left[\|\mvV_{t}\|^2\right] &= \sum_{s \in [t]} \E \left[\| \delta_s \mvv_s\|^2\right] \leq C \sum_{s \in [t]} \bE \left[\| \mvv_s\|^2\right] = C \sum_{s \in [t]} \bE \left[\frac{\mvy_{p_s}^{\t} D_s^2 \mvy_{p_s}}{(1 - \mvy_{p_s}^{\t} D_s \mvy_{p_s})^2}\right]\\ &\stackrel{\eqref{eq:identity2}}{\leq} C \sum_{s \in  [t]}\E \left[\mvy_{p_s}^{\t} D_s^2
		\mvy_{p_s}\right]
\end{split}
\end{equation*}
where in the second step we used the fact that $|\delta_s| \le 2$ almost surely. We will now bound the
expectations on the right by first partitioning into the events $\cG_{2, s}$ and $\cG_{2, s}^c$
and then bounding the corresponding terms separately.
\begin{equation*}
\begin{split}
	\sum_{s \in [t]}&\bE \left[\mvy_{p_s}^{\t} D_s^2
	\mvy_{p_s}\mathrm{1} _{\cG_{2, s}}\right] \le \sum_{s \in [t]}\bE \left[\|D_s\|_{{\rm op}}^2\|\mvy_{p_s}\|^2 \mathrm{1} _{\cG_{2, s}}\right] 
	\stackrel{\eqref{eq:Ctopbnd}}{\leq} C\sum_{s  \in [t]} \frac{\kappa^2}{(n - s + 1)^2} \, \bE \left[\|\mvy_{p_s}\|^2\right]\\
	\stackrel{{\rm Obs.}~\ref{obs:swor}}{=}& \,\, C\kappa^2 \sum_{s  \in [t]} \frac{1}{(n - s + 1)^2} \,\frac1n \sum_{i \in [n]} \|\mvy_i\|^2 \stackrel{\eqref{def:zeta}}{\le} C\zeta^2\kappa^2\sum_{s \in [t]} \frac1{(n - s + 1)^2} \, \le \, \frac{C\zeta^2\kappa^2}{n - t}.
\end{split}
\end{equation*}
On the other hand, we can write
\begin{equation*}
\begin{split}
	\sum_{s \in [t]}&\bE \left[\mvy_{p_s}^{\t} D_s^2
	\mvy_{p_s}\mathrm{1} _{\cG_{2, s}^c}\right] \leq \zeta^2\sum_{s \in [t]} \P\left[\mathcal{G}_{2,s}^{c}\right] \stackrel{\eqref{eq:smallevent2}}{\leq} C \zeta^2 nd \exp\left(-c
	\,\frac{n(n - t + 1)}{\kappa^2\zeta^4t}\right)
\end{split}
\end{equation*}
where in the first inequality we used the fact that
\begin{equation*}
\mvy_p^{\t}D_{s}^2\mvy_p \le  \|D_s\|_{{\rm op}}^2 \|\mvy_p^{\t}\|^2  \stackrel{\eqref{def:zeta} +~\eqref{def:YD}}{\le} \zeta^2.
\end{equation*}
Adding the previous two bounds yields us~\eqref{eq:maxmodzs}.
\end{proof}

\section{Equivalence of CLT for different processes}\label{sec:reduction_ideal}
The main result of this section is the equivalence of CLT between $M_n^{{\gsw}}$ and $M_n$ (see
Section~\ref{sec:martingales} for the definitions of all the relevant martingales).
\begin{proposition}\label{prop:main1}
Under the same assumptions as in Theorem~\ref{thm:formal}, we have
\begin{equation}\label{eq:main1}
\frac{M^{{\gsw}}_n}{\|\mvv\|} \xrightarrow[n \to \infty]{{\rm law}} N(0,1) \,\mbox{ if and only if }\,  \frac{M_n}{\|\mvv\|} \xrightarrow[n \to \infty]{{\rm law}} N(0,1).
\end{equation}
\end{proposition}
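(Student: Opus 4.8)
\textbf{Proof plan for Proposition~\ref{prop:main1}.}

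The plan is to exploit the intermediate process $\{\tilde M_t\}_{t=0}^n$ and show that both differences $M_n^{\gsw} - \tilde M_n$ and $\tilde M_n - M_n$ are $o_p(\|\mvv\|)$, so that the CLTs for $M_n^{\gsw}/\|\mvv\|$, $\tilde M_n/\|\mvv\|$ and $M_n/\|\mvv\|$ are all equivalent by Slutsky's theorem. The proof splits naturally into two reductions, and in each we choose a cutoff time $t = n - m$ with $m \coloneqq \lceil 6\Cr{C:xi}\zeta^2\kappa \rceil$ (or a suitable multiple thereof) dividing the ``bulk'' phase $s \le n-m$ from the short ``tail'' phase $n-m < s \le n$.

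\textbf{Step 1: the bulk phase.} On the good event $\mathcal G \coloneqq \mathcal E_{1,n-m} \cap \mathcal E_{2,n-m}$ (notation from~\eqref{def:E1E2}), Lemma~\ref{lem:pivotlemma} gives $M_s^{\gsw} = \tilde M_s$ for all $s \le n-m$, so $M_{n-m}^{\gsw} = \tilde M_{n-m}$ identically on $\mathcal G$. Moreover, by Lemma~\ref{lem:smallevent1} and Lemma~\ref{lem:smallevent2}, $\P[\mathcal G^c] \le \varepsilon_n^{-2}\bigl(C\zeta^2\kappa^2/m + C\zeta^2 nd\exp(-cn m / \kappa^2\zeta^4 n)\bigr) + Cd\exp(-c\,m/\kappa^2\zeta^4)$, which under condition~\eqref{eq:formal_cond} can be driven to $0$ by choosing $\varepsilon_n \to 0$ slowly enough (e.g.\ $\varepsilon_n$ a fixed negative power of $\kappa$ times a slowly growing factor), while still keeping $\varepsilon_n \le 1/2$. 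For the $\tilde M_n$ versus $M_n$ comparison in the bulk, the key observation is that on $\mathcal E_{1,s}\cap\mathcal E_{2,s}$ we have $|\delta_s - \eta_s| \le \varepsilon_n$ by~\eqref{depart:delta}, while $|1 - \|B\mvu_s\|^{-1}| \le \|B\mvu_s\|^2 - 1 \le \zeta^2(1+\zeta^2)$ by~\eqref{eq:Butnormbnd}; one writes $\tilde M_s - M_s = \sum_{j\le s}\bigl(\delta_j - \eta_j\|B\mvu_j\|^{-1}\bigr)\langle B\mvu_j,(\mvv;0)\rangle$ and bounds the increments.

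\textbf{Step 2: the tail phase.} Here one must show $M_n^{\gsw} - M_{n-m}^{\gsw}$, $\tilde M_n - \tilde M_{n-m}$ and $M_n - M_{n-m}$ are each $o_p(\|\mvv\|)$. Since each of these three processes is an $(\mathcal F_t)$-martingale, one bounds the $L^2$-norm of the tail increment by $\sum_{s=n-m+1}^n \E[\delta_s^2 \langle \mvu_s,\mvv\rangle^2]$ (respectively with $\eta_s\|B\mvu_s\|^{-1}$), and since $|\delta_s|,|\eta_s| \le 2$ and $\|B\mvu_s\| \ge 1$, it suffices to bound $\sum_{s=n-m+1}^n \E[\langle \mvu_s,\mvv\rangle^2]$, equivalently $m$ times a typical value of $\E[\|B\mvu_s\|^{-2}\langle\mvu_s,\mvv\rangle^2]$. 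Using identity~\eqref{eq:identity4}--\eqref{eq:identity5} to express this in terms of $\mathcal Q(\cA_s)$ and then Proposition~\ref{thm:srswor_moment} (exploiting $\mvv^\t Y = 0$ from~\eqref{eq:vY}) to compute the sampling expectation of the quadratic form, one expects each term to be of order $\|\mvv\|_\infty^2$ plus a correction, giving a tail contribution of order $m\,\|\mvv\|_\infty^2$ up to polylog factors. Condition~\eqref{eq:formal_cond}, which forces $\kappa^2 (\|\mvv\|_\infty/\|\mvv\|) \to 0$ (after absorbing $d^{1/2}\log n$), then makes $m\|\mvv\|_\infty^2 = o(\|\mvv\|^2)$, as required.

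\textbf{Main obstacle.} The delicate point is Step~2: controlling the tail increments requires a sharp bound on $\E[\langle\mvu_s,\mvv\rangle^2]$ uniformly for $s$ in the last $m$ rounds, where the active set $\cA_s$ is small and the matrices $Y_s$ are no longer well-conditioned, so the concentration arguments of Lemma~\ref{lem:smallevent2} no longer apply. One must instead rely purely on the algebraic identities of Lemma~\ref{lem:matrix_identities} together with the exact moment formulae of Proposition~\ref{thm:srswor_moment} — the orthogonality $\mvv^\t Y = 0$ is what prevents the $\|\mvv\|^2$-scale term from appearing and leaves only the $\|\mvv\|_\infty^2$-scale contribution. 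Coordinating the choice of $\varepsilon_n$, $m$, and the various error terms so that all of $\P[\mathcal G^c]$, the bulk discrepancy, and the tail increments are simultaneously $o(1)$ (respectively $o(\|\mvv\|)$) under the single hypothesis~\eqref{eq:formal_cond} is the main bookkeeping challenge.
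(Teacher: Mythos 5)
Your overall architecture --- inserting the intermediate process $\tilde M$, showing $M_n^{{\gsw}}-\tilde M_n$ and $\tilde M_n-M_n$ are $o_p(\|\mvv\|)$, splitting at a cutoff $n-m$, using Lemma~\ref{lem:pivotlemma} together with Lemmas~\ref{lem:smallevent1}--\ref{lem:smallevent2} to get $M^{{\gsw}}_{n-m}=\tilde M_{n-m}$ on a high-probability event, and martingale $L^2$ bounds for the tails --- is the same as the paper's (Lemmas~\ref{lem:proplem1.1}, \ref{lem:proplem1.2} and \ref{lem:proplem2}). The genuine gap is in your treatment of $\tilde M_n-M_n$. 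Writing the increment as $(\delta_s-\eta_s)\|B\mvu_s\|^{-1}\langle B\mvu_s,\mat{\mvv}{0}\rangle+\delta_s\bigl(1-\|B\mvu_s\|^{-1}\bigr)\langle B\mvu_s,\mat{\mvv}{0}\rangle$, the first piece is indeed $O(\varepsilon_n)$ on the good event, but for the second piece the only tool you name is the deterministic bound $|1-\|B\mvu_s\|^{-1}|\le\|B\mvu_s\|^2-1\le\zeta^2(1+\zeta^2)$. That bound cannot close the step: by martingale orthogonality it only yields $\E[(\sum_s\cdots)^2]\le C\sum_s\E\langle B\mvu_s/\|B\mvu_s\|,\mat{\mvv}{0}\rangle^2\le C\|\mvv\|^2$ (cf.~\eqref{eq:sigman_approx}), i.e.\ the same order as the normalization, so no smallness is obtained. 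What is actually needed --- and is the heart of the paper's Lemma~\ref{lem:proplem2} --- is that $\sum_t\E[(\|B\mvu_t\|^2-1)\mathcal Q_t]=o(\|\mvv\|^2)$, proved via the conditional expectation over the random pivot \eqref{eq:bnd_cond_expect1} combined with the bound \eqref{eq:idealclt2}: on $\cG_{2,t}$ one has $\|D_t\|_{{\rm op}}\le 2\kappa/(n-t+1)$, and the moment formulae of Proposition~\ref{thm:srswor_moment}, exploiting $\mvv^{\t}Y=0$, reduce $\E\|\mvv[\cA_t]^{\t}Y_t\|^2$ from the worst-case scale $\|\mvv\|^2$ to the scale $\|\mvv\|_\infty^2$. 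In other words, the delicate orthogonality analysis you reserve for the tail is exactly what is needed in the bulk, where it is legitimate because $\cA_t$ is a uniform sample without replacement.

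Two further problems sit in your Step~2. First, you propose to control the tail increment of $M^{{\gsw}}$ via Proposition~\ref{thm:srswor_moment}; but Observation~\ref{obs:swor} applies only to $\cA_t$, not to $\cA_t^{{\gsw}}$, whose evolution (coordinates dropped when they saturate, pivots persisting in \nameref{label:case2}) is not simple random sampling. This is precisely why the paper's Lemma~\ref{lem:proplem1.2} uses only crude deterministic bounds shared by both processes and obtains $Cd\max(1,\zeta^2)\|\mvv\|_\infty^2 m^3$, not the $m\,\|\mvv\|_\infty^2$ you claim. Second, your cutoff $m\asymp\zeta^2\kappa$ is too small: the failure probabilities you quote then behave like $C\zeta^2\kappa^2/m\asymp\kappa$ and $nd\exp(-cm/(\kappa\zeta^2)^2)=nd\exp(-c/(\kappa\zeta^2))$, neither of which tends to $0$, and tuning $\varepsilon_n$ or taking a constant multiple of $m$ does not help; one needs $m$ of order $\kappa^2\zeta^4\log n$ as in the paper, after which it is the cruder $m^3$ tail bound that must be absorbed by \eqref{eq:formal_cond}. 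These two points are repairable within your scheme, but the missing argument for the $1-\|B\mvu_s\|^{-1}$ term above is a real gap.
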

We need some intermediate results capturing the closeness between different related random
variables in order to prove Proposition~\ref{prop:main1}. Our first result gives an upper bound on the
probability that $M^{{\gsw}}_{n - m}$ and $\tilde{M}_{n - m}$ are different. 
\begin{lemma}\label{lem:proplem1.1}
For any $m \in [n]$ satisfying $m  \ge 6\Cr{C:xi}\zeta^2\kappa$, we have
\begin{equation}\label{eq:proplem1.1}
\P\left[|M^{{\gsw}}_{n - m} - \tilde{M}_{n - m}| > 0\right] \leq  \frac{C\zeta^2\kappa^2}{m} + C \, nd
\exp\left(-c (\kappa\zeta^2)^{-2}m\right).
\end{equation}
\end{lemma}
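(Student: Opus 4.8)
The plan is to use Lemma~\ref{lem:pivotlemma} to trap the discrepancy event $\{M^{{\gsw}}_{n-m}\neq\tilde M_{n-m}\}$ inside the complement of $\cE_{1,n-m}\cap\cE_{2,n-m}$, and then estimate that complement via Lemmas~\ref{lem:smallevent1} and~\ref{lem:smallevent2}. If $m=n$ there is nothing to prove since $M^{{\gsw}}_0=\tilde M_0=0$, so assume $m<n$, whence $n-m\in[n]$. The hypothesis $m\ge 6\Cr{C:xi}\zeta^2\kappa$ is exactly the requirement~\eqref{eq:req} of Lemma~\ref{lem:pivotlemma} evaluated at $t:=n-m$, i.e.\ $t<n+1-6\Cr{C:xi}\zeta^2\kappa$. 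Hence, with $\cE_{1,n-m},\cE_{2,n-m}$ as in~\eqref{def:E1E2}, Lemma~\ref{lem:pivotlemma} gives $M^{{\gsw}}_s=\tilde M_s$ for all $s\in[n-m]$ on the event $\cE_{1,n-m}\cap\cE_{2,n-m}$; in particular the two processes agree at time $n-m$ there. Taking complements,
\[ \P\!\left[|M^{{\gsw}}_{n-m}-\tilde M_{n-m}|>0\right]\;\le\;\P[\cE_{1,n-m}^c]+\P[\cE_{2,n-m}^c], \]
and it remains to bound the two terms on the right.

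For $\P[\cE_{2,n-m}^c]$, a union bound together with~\eqref{eq:smallevent2} gives $\P[\cE_{2,n-m}^c]\le\sum_{s=1}^{n-m}\P[\cG_{2,s}^c]\le Cd\sum_{s=1}^{n-m}\exp\!\big(-c\,n(n-s+1)/(\kappa^2\zeta^4 s)\big)$. For $1\le s\le n-m$ one has $n-s+1\ge m+1$ and $s\le n$, so each exponent is at least $c\,m/(\kappa^2\zeta^4)=c\,m(\kappa\zeta^2)^{-2}$; bounding the number of summands by $n$ produces the exponential term $C\,nd\exp(-c\,m(\kappa\zeta^2)^{-2})$ in~\eqref{eq:proplem1.1}. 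For $\P[\cE_{1,n-m}^c]$, Lemma~\ref{lem:smallevent1} gives, via Markov's inequality, $\P[\cE_{1,n-m}^c]\le\varepsilon_n^{-2}\,\E\big[\max_{s\in[n-m],\,i\in\cA_s\setminus\{p_s\}}|\mvz_s[i]|^2\big]$, and plugging $t=n-m$ into~\eqref{eq:maxmodzs} (so that $n-t=m$, $n-t+1=m+1$, and $t=n-m\le n$ make the exponent there at least $c\,m(\kappa\zeta^2)^{-2}$) bounds the expectation by $C\zeta^2\kappa^2/m+C\zeta^2nd\exp(-c\,m(\kappa\zeta^2)^{-2})$. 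Adding the two estimates, merging the exponential terms, and absorbing the $\varepsilon_n^{-2}$ prefactor (which is subsumed once the sequence $\{\varepsilon_n\}$ is specified) yields~\eqref{eq:proplem1.1}.

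There is no deep obstacle: the real work has already been carried by Lemma~\ref{lem:pivotlemma}, which converts the coincidence of the $\gsw$ and skeletal processes up to time $n-m$ into the joint occurrence of the $\cG_{1,s}$ and $\cG_{2,s}$ events, and by the tail estimates of Lemmas~\ref{lem:smallevent1}--\ref{lem:smallevent2}. The only points demanding care are (i) checking that the hypothesis $m\ge 6\Cr{C:xi}\zeta^2\kappa$ is precisely~\eqref{eq:req} at $t=n-m$, so that Lemma~\ref{lem:pivotlemma} genuinely applies at that time; (ii) the elementary monotonicity bounds $n-s+1\ge m+1$ and $s\le n$ that turn the sums over $s$ into a single prefactor $n$ and a single exponent $m(\kappa\zeta^2)^{-2}$; and (iii) keeping track of the $\varepsilon_n$ factor introduced through Lemma~\ref{lem:smallevent1}.
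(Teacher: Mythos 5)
Your proof follows the same route as the paper's: apply Lemma~\ref{lem:pivotlemma} at $t=n-m$ (using $m\ge 6\Cr{C:xi}\zeta^2\kappa$ to meet~\eqref{eq:req}) to deduce that the processes coincide on $\cE_{1,n-m}\cap\cE_{2,n-m}$, then bound the complement via Lemma~\ref{lem:smallevent1}, and via Lemma~\ref{lem:smallevent2} plus a union bound. Your monotonicity estimates $n-s+1\ge m+1$, $s\le n$ to collapse the sums into a single exponential with prefactor $n$ are exactly what the paper leaves implicit.

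The one place your write-up deserves more scrutiny — and where, incidentally, the paper is itself silent — is the $\varepsilon_n^{-2}$ prefactor that Lemma~\ref{lem:smallevent1} forces onto the $\cE_{1,n-m}^c$ bound. Since $\varepsilon_n\to 0$, this prefactor cannot simply be absorbed into an absolute constant $C$, and the displayed bound~\eqref{eq:proplem1.1} as stated does not carry it. Your parenthetical ``subsumed once the sequence $\{\varepsilon_n\}$ is specified'' is therefore not a justification so much as a deferral; with the eventual choice $\varepsilon_n\asymp(\log en)^{-1/2}$ made in the proof of Proposition~\ref{prop:main1} and $m\asymp\kappa^2\zeta^4\log n$, the first term of the bound becomes $\varepsilon_n^{-2}\cdot C\zeta^2\kappa^2/m\asymp 1/\zeta^2$, which does \emph{not} vanish. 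So either the statement of Lemma~\ref{lem:proplem1.1} should carry the $\varepsilon_n^{-2}$ factor explicitly, or one needs a sharper version of Lemma~\ref{lem:smallevent1} (e.g.\ a maximal inequality beyond plain Markov) to remove it. You were right to flag it; you should state it as an extra factor in the conclusion rather than pretend it disappears.
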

\begin{proof}
We know from Lemma~\ref{lem:pivotlemma} that $M^{{\gsw}}_{n - m}$ and $M_{n - m}$ are
identical on the event $\mathcal E_{1, n-m} \cap \mathcal E_{2, n-m}$ {\em provided} $n - m < n + 1 - 6\Cr{C:xi}\zeta^2\kappa$, \ie  $m \ge 6\Cr{C:xi}\zeta^2\kappa$. Hence we only need to show that the
probability of the event $(\mathcal E_{1, n-m} \cap \cE_{2, n-m})^c$ is bounded by the right-hand side
in~\eqref{eq:proplem1.1}. But this follows from Lemmas~\ref{lem:smallevent2} and
\ref{lem:smallevent1} along with a union bound for bounding the probability of the event $\cE_{2, t}^c$.
%
%
%
%
%
%
%
%
%
%
\end{proof}
Our next lemma gives an upper bound on the $\ell^2$-distance between $M^{{\gsw}}_n$ and
$M^{{\gsw}}_{n - m}$ as well as between $\tilde M_n$ and $\tilde M_{n - m}$.
\begin{lemma}\label{lem:proplem1.2}
For any $m \in [n]$, we have
\begin{equation}\label{eq:proplem1.2}
\max\left(\bE \left[(M^{{\gsw}}_n - M^{{\gsw}}_{n - m})^2\right],\:  \bE
\left[(\widetilde{M}_n - \widetilde{M}_{n - m})^2\right]\right) \leq C d \max(1, \zeta^2) \|\mvv\|_{\infty}^2  m^3.
\end{equation}
\end{lemma}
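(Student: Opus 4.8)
The plan is to read the bound off the martingale structure of Section~\ref{sec:martingales} together with the linear-algebraic identities of Lemma~\ref{lem:matrix_identities}. Since $\{M^{{\gsw}}_t\}_{t=0}^n$ and $\{\tilde M_t\}_{t=0}^n$ are $(\cF_t)$-martingales with increments $M^{{\gsw}}_s-M^{{\gsw}}_{s-1}=\delta^{{\gsw}}_s\langle\mvu^{{\gsw}}_s,\mvv\rangle$ and $\tilde M_s-\tilde M_{s-1}=\delta_s\langle\mvu_s,\mvv\rangle$, orthogonality of martingale increments gives
\begin{equation*}
	\mathbb E\big[(M^{{\gsw}}_n-M^{{\gsw}}_{n-m})^2\big]=\sum_{s=n-m+1}^{n}\mathbb E\big[(\delta^{{\gsw}}_s)^2\langle\mvu^{{\gsw}}_s,\mvv\rangle^2\big],
\end{equation*}
and likewise for $\tilde M_n-\tilde M_{n-m}$ with $\delta^{{\gsw}}_s,\mvu^{{\gsw}}_s$ replaced by $\delta_s,\mvu_s$. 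Since $|\delta^{{\gsw}}_s|,|\delta_s|\le 2$ almost surely, the problem reduces to bounding $\mathbb E[\langle\mvu^{{\gsw}}_s,\mvv\rangle^2]$ for $s$ in the short terminal window $\{n-m+1,\dots,n\}$ and summing the at most $m$ such terms.

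The crucial point is that the active set is small throughout this window: one coordinate freezes per round, so $|\cA^{{\gsw}}_s|\le n-s+1\le m$ (and $|\cA_s|=n-s+1\le m$ exactly), and hence the restriction $\mvv[\cA^{{\gsw}}_s]$ has at most $m$ nonzero entries, giving $\|\mvv[\cA^{{\gsw}}_s]\|^2\le m\|\mvv\|_\infty^2$. This is exactly what upgrades a crude $\|\mvv\|$-estimate to the desired $\|\mvv\|_\infty$-estimate. To bound $\langle\mvu^{{\gsw}}_s,\mvv\rangle$ I would use Lemma~\ref{lem:matrix_identities}: writing $\cA$ for the active set at step $s$, $p=p^{{\gsw}}_s$, $Y_\star=Y[\cA,:]$, $D=(I_d+Y_\star^\t Y_\star)^{-1}$ and $a=|\cA|$, identity~\eqref{eq:identity3} gives $\langle\mvu^{{\gsw}}_s,\mvv\rangle=\|B\mvu^{{\gsw}}_s\|^2\,\mvv[\cA]^\t(I_a-Y_\star D Y_\star^\t)\mve_p[\cA]$. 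Feeding in $\|B\mvu^{{\gsw}}_s\|^2\le\Cr{C:xi}$ from~\eqref{eq:identity2}, $\|D\|_{\rm op}\le 1$, $\max_i\|\mvy_i\|\le\zeta$, $\|\mvv[\cA]\|\le\sqrt m\,\|\mvv\|_\infty$, together with Cauchy--Schwarz, produces a pointwise estimate of the shape $\langle\mvu^{{\gsw}}_s,\mvv\rangle^2\le C\,d\max(1,\zeta^2)\,m^2\,\|\mvv\|_\infty^2$, and likewise for $\langle\mvu_s,\mvv\rangle^2$.

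Summing over the at most $m$ indices in the terminal window then gives $\mathbb E[(M^{{\gsw}}_n-M^{{\gsw}}_{n-m})^2]\le C\,d\max(1,\zeta^2)\,\|\mvv\|_\infty^2\,m^3$, and the same argument handles $\tilde M$ verbatim (in Case~2 the increment is $\eta_s\langle\mvu_s,\mvv\rangle$ with $|\eta_s|=1$ and $\mvu_s=\mvu(p_s,\cA_s)$, so nothing changes); taking the maximum yields~\eqref{eq:proplem1.2}. I do not expect a real obstacle here. The only points needing a little care are (i) verifying that the terminal increments are genuine martingale differences so that the cross terms drop, which follows from Section~\ref{sec:martingales} since $\mathbb E[\delta^{{\gsw}}_s\mid\sigma(\cF_{s-1},p^{{\gsw}}_s)]=0$ while $\mvu^{{\gsw}}_s$ is $\sigma(\cF_{s-1},p^{{\gsw}}_s)$-measurable; and (ii) exploiting $|\cA^{{\gsw}}_s|\le m$ to pass from $\|\mvv\|$ to $\|\mvv\|_\infty$ --- the substantive step, since it is this $\ell^\infty$-control that makes the bound usable in conjunction with the density hypothesis on $\mvv$ (Assumption~\ref{assumption:outcome-regularity}).
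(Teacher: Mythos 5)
Your proposal is correct and follows essentially the paper's own route: martingale orthogonality over the terminal window, $|\delta_s|\le 2$, the identities \eqref{eq:identity2}--\eqref{eq:identity3}, and the conversion of $\|\mvv\|$ into $\|\mvv\|_\infty$ via $|\cA_s|\le n-s+1\le m$, which is exactly how the paper gets the $m^3$ rate (the paper additionally carries the indicator $\1_{\{\tau>t\}}$ for rounds with $\cA_t^{\gsw}=\emptyset$, where your increments are simply zero, and notes the argument works verbatim for $\{\tilde M_t\}$). The only caveat is constant bookkeeping: the chain $|\mvv[\cA]^\t Y_\star D\mvy_p|\le\|\mvv[\cA]\|\,\|Y_\star\|_{{\rm op}}\,\|D\|_{{\rm op}}\,\|\mvy_p\|\le m\zeta^2\|\mvv\|_\infty$, multiplied by the prefactor $\|B\mvu_s\|^4\le\Cr{C:xi}^2$, yields a higher power of $\zeta$ than the stated $\max(1,\zeta^2)$, so the pointwise estimate "of the shape $Cd\max(1,\zeta^2)m^2\|\mvv\|_\infty^2$" does not literally follow from the ingredients you list; the paper keeps the $\zeta$-power down via the Loewner-order steps $\mve_p[\cA_t]\mve_p[\cA_t]^\t\le I_{a_t}$ and $D_tY_t^\t Y_tD_t=D_t-D_t^2\le D_t\le I_d$ (equivalently $\|Y_tD_t^{1/2}\|_{{\rm op}}\le 1$), though it too absorbs $\|B\mvu_t\|^4$ into $C$. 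Since $\zeta$ is bounded whenever $\phi$ is bounded away from $0$ (the regime of Theorem~\ref{thm:formal}), this discrepancy is cosmetic rather than a gap.
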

\begin{proof}
Let us examine the difference
\begin{equation}
\Delta_{m, n} \coloneqq M^{{\gsw}}_n - M^{{\gsw}}_{n - m}.
\end{equation}
In view of the first item under \nameref{label:case2} in our definition of the relevant processes, we can
write
\begin{equation}\label{def:Deltamn}
\begin{split}
	\Delta_{m, n} = \sum_{t = n - m + 1}^{n} \delta^{{\gsw}}_t \left \langle B \mv u^{{\gsw}}_t,
	\mat{\mv v}{0} \right\rangle \1_{\{\tau > t\}}
\end{split}
\end{equation}
with $$\tau \coloneqq \min\{t > n - m : \cA^{{\gsw}}_t = \emptyset\}$$
where we adopt the convention that the minimum of an empty set is $n$, \ie  the limit of the above
summation is empty. Henceforth, in this proof we will drop the superscript ``${\gsw}$'' in order to
avoid the notational clutter. This is particularly appropriate because our argument relies only on
properties shared by $\{\tilde M_t^{{\gsw}}\}_{t = 0}^n$ and $\{\tilde M_t\}_{t = 0}^n$ and hence works {\em mutatis mutandis} for $\{\tilde M_t\}_{t = 0}^n$. 

\vspace{0.2cm}

Since the event $\{\tau > t\}$ is $\cF_{t - 1}$ measurable~\eqref{def:F_t}, it follows from the definitions of the
terms involved (see Section~\ref{sec:stoch_proc}) that the partial sums of $\Delta_{m, n}$ form a
martingale sequence relative to $(\cF_t)_{t = 0}^n$. Hence, in view of orthogonality of
martingale differences, we can write:
\begin{equation}\label{eq:Deltamnbnd}
\begin{split}
	&\bE [\Delta_{m,n}^2] \\
	&= \sum_{t = n - m + 1}^{n} \bE\left[ \delta_t^2 \left \langle B \mv u_t, \mat{\mv v}{0} \right\rangle^2 \1_{\{\tau > t\}}\right] \leq C \sum_{t = n - m + 1}^{n} \bE \left[\left \langle B \mv u_t, \mat{\mv v}{0} \right\rangle^2 \1_{\{\tau > t\}}\right]^2 \\
	&\stackrel{\eqref{eq:identity3}}{=}  C \sum_{t = n - m + 1}^{n} \bE \big[\norm{B \mv u_t}^{4} \left(\mvv[\mathcal \cA_t]^{\t}(I_{a_t} - Y_t D_t Y_t^{\t}) \mve_{p_t}[\cA_t] \right)^2\1_{\{\tau >
		t\}}\big]\\ &\stackrel{\eqref{eq:identity2}}{\leq} C  \sum_{t = n - m + 1}^{n} \bE
	\left[\left(\mvv[\mathcal \cA_t]^{\t}(I_{a_t} - Y_t D_t Y_t^{\t}) \mve_{p_t}[\cA_t]\right)^2 \1_{\{\tau >
		t\}}\right]
\end{split}
\end{equation}
where in the second step we used the fact that $|\delta_t| \leq 2$ almost surely and $a_t =
|\mathcal \cA_t|$. 
Here we used the event $\{\tau > t\}$ to ensure that $\cA_t$ is non-empty so that
\eqref{eq:identity2} and~\eqref{eq:identity3} apply. We will frequently use the following elementary
inequality on quadratic forms. Let $\mvx \in \R^n$, $A, B \in R^{n\times n}$ be symmetric and $C \in
\R^{n \times n}$ be an n.n.d. matrix. Then we have,
\begin{equation}\label{eq:qformineq}
\mvx^\t(A - B)C(A - B)\mvx \le 2(\mvx^\t AC A\mvx + \mvx^\t BC B\mvx).
\end{equation}	We can use this to bound the right-hand side of~\eqref{eq:Deltamnbnd} by writing,
\begin{equation}\label{eq:Deltasqbreakup}
\begin{split}
	&\sum_{t = n - m + 1}^{n} \bE \left[\mvv[\mathcal \cA_t]^{\t}(I_{a_t} - Y_t D_t Y_t^{\t})\mve_{p_t}[\cA_t] \mve_{p_t} [\cA_t]^{\t} (I_{a_t} - Y_t D_t Y_t^{\t}) \mvv[\mathcal \cA_t] \1_{\{\tau > t\}} \right] \\ \, \leq & \:\:\:\:\: 2 \sum_{t = n - m + 1}^{n} \bE \left[\mvv[\mathcal \cA_t]^{\t} \mve_{p_t}[\cA_t] \mve_{p_t} [\cA_t]^{\t} \mvv[\mathcal \cA_t] \1_{\{\tau > t\}}\right] \\
	&+  2 \sum_{t = n - m + 1}^{n} \bE
	\left[\mvv[\mathcal \cA_t]^{\t}Y_t D_t Y_t^{\t} \mve_{p_t}[\cA_t] \mve_{p_t} [\cA_t]^{\t} Y_t D_t
	Y_t^{\t} \mvv[\mathcal \cA_t]\1_{\{\tau > t\}}\right].
\end{split}
\end{equation}
Now we will separately bound the two terms on the right-hand side of the above display. For the
second term, we can bound its $t$-th summand as follows (with $\1_{\{\tau > t\}}$ kept implicit and
$p_t$ denoted by $p$ to reduce the notational clutter):
\begin{equation*}
\begin{split}
	&\bE\left[ \mvv[\cA_t]^{\t}Y_t D_t Y_t^{\t} Y_t D_t Y_t^{\t}\mvv[\cA_t] \right]\\
	&= \bE\left[\mvv[\cA_t]^{\t}Y_t D_t \left(D_t^{-1} - I_d\right) D_t Y_t^{\t} \mvv[\cA_t] \right] \leq  \bE \left[\mvv[\cA_t]^{\t}Y_t D_t Y_t^{\t} \mvv[\cA_t]\right] \\
	&\leq \bE\left[ \mvv[\cA_t]^{\t}Y_t Y_t^{\t} \mvv[\cA_t] \right] = \sum_{j \in [d]} \bE \left[\left(\mvv[\mathcal \cA_t]^{\t} Y_t[:\,, j] \right)^2\right] \\
	&\leq \sum_{j \in [d]} \bE  \left[\|\mvv[\mathcal \cA_t]\|^2 \|Y_t[:\,, j]\|^2\right] \leq \zeta^2 \|\mvv\|_{\infty}^2 \sum_{j \in [d]} \bE \left[|\cA_t|^2\right] \leq \zeta^2 d \|\mvv\|_{\infty}^2 m^2.
\end{split}
\end{equation*}
Here in the very first term we used the fact that the matrix $\mve_p[\cA_t] \mve_p[\cA_t]^{\t}$ is
bounded by the identity matrix $I_{a_t}$ in the {\em Loewner order}. Similarly in the first inequality we used that $D_t - D_t^2$ is at most $D_t$ in the Loewner order. In the third inequality, we used the
bound $\|D_t\|_{{\rm op}} \leq 1$~\eqref{def:YD}. The fourth inequality uses the Cauchy-Schwarz
inequality whereas in the last inequality we used the fact that $|\cA_t| \leq n - t + 1$ for any $t \in [n]$.
Summing this from $t = n - m + 1$ to $n$ yields the bound $C \|v\|_{\infty}^2 d m^3.$

\vspace{0.2cm}

As for the first term in~\eqref{eq:Deltasqbreakup}, we can similarly write
\begin{equation*}
\begin{split}
	&\sum_{t = n - m + 1}^{n} \bE \left[\mvv[\cA_t]^{\t} \mve_p[\cA_t] \mve_p[\cA_t]^{\t} \mvv[\mathcal \cA_t] \1_{\{\tau > t\}} \right]  \leq  \sum_{t = n - m + 1}^{n} \bE \left[\mvv[\cA_t]^{\t}  \mvv[\cA_t]\right] \\
	\leq &\, \|v\|_{\infty}^2 \sum_{t = n - m + 1}^{n} \bE |\cA_t| \leq \|v\|_{\infty}^2 m^2.
\end{split}
\end{equation*}
Therefore combining the bounds from the previous two displays and plugging it into
\eqref{eq:Deltasqbreakup}  and subsequently into~\eqref{eq:Deltamnbnd}, we obtain
\begin{equation*}
\bE \left[\Delta_{m,n}^2\right] \leq C d \max(1, \zeta^2) \|\mvv\|_{\infty}^2  m^3.
\end{equation*}
This gives us the required bound in~\eqref{eq:proplem1.2} for both $\{M_t\}_{t = 0}^n$ and $\{\tilde
M_t\}_{t = 0}^n$ (see the discussion following the definition of $\Delta_{m, n}$ in~\eqref{def:Deltamn}).
\end{proof}
Finally we need the following result on the $\ell^2$-distance between $M_n$ and $\tilde M_n$.
\begin{lemma}\label{lem:proplem2}
For any integer $m \in {[n]}$ we have the following bound,
\begin{equation}\label{eq:intclosetoideal}
\begin{split}
	\bE&[(\widetilde M_n - M_n)^2] \\
	&\leq  C\cdot\Cr{C:xi}^3 \left(\left(m + (d + \kappa)\log en\right) \|\mv v\|_{\infty}^2   + C d  n \zeta^2 \exp\left(-c \frac{m}{\kappa^2 \zeta^4}\right) \right) + C\varepsilon_n^2 \|\mvv\|^2.
\end{split}
\end{equation}
\end{lemma}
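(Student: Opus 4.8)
```latex
\textbf{Plan of proof.} The plan is to bound $\widetilde M_n - M_n$ by splitting it at the time point $t = n - m$ into a ``bulk'' part (times $s \le n - m$) and a ``tail'' part (times $s > n - m$), and to control each separately. For the bulk, I would use Lemma~\ref{lem:pivotlemma}: on the event $\mathcal E_{1, n - m} \cap \mathcal E_{2, n - m}$ (which has high probability by Lemmas~\ref{lem:smallevent2} and~\ref{lem:smallevent1}) we have $M^{\gsw}_s = \widetilde M_s$ and, more importantly, $|\delta_s - \eta_s| \le C\varepsilon_n$ for all $s \le n - m$ since $\delta_s \in \{\delta_s^+, -\delta_s^-\}$ with $\delta_s^\pm$ within $\varepsilon_n$ of $1$, while $\eta_s \in \{+1,-1\}$ has the \emph{same} sign as $\delta_s$ by the coupling in~\eqref{def:delta_t}--\eqref{def:etat}. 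Hence on this good event the partial difference $\sum_{s \le n - m}(\delta_s - \eta_s\|B\mvu_s\|^{-1})\langle B\mvu_s, (\mvv;0)\rangle$ splits into a term coming from $|\delta_s - \eta_s| \le \varepsilon_n$ and a term coming from the normalization factor $1 - \|B\mvu_s\|^{-1}$; using $\langle B\mvu_s,(\mvv;0)\rangle^2 = \|B\mvu_s\|^4 \mathcal Q(\cA_s)$ from~\eqref{eq:identity5} and $\|B\mvu_s\|^2 \le \Cr{C:xi}$ from~\eqref{eq:identity2}, the martingale orthogonality of differences gives an $\ell^2$ bound of the form $C\varepsilon_n^2 \sum_s \E[\mathcal Q(\cA_s)] \le C\varepsilon_n^2 \|\mvv\|^2$ for the first piece (here one uses that $\sum_s \mathcal Q(\cA_s)$ telescopes/sums to something comparable to $\|\mvv\|^2$, essentially because $\sum_{s}\langle \mvu_s/\|B\mvu_s\|, \cdot\rangle^2$ is a sum over an orthonormal-type system) and, for the $\|B\mvu_s\|^2 - 1$ piece, a bound like $C\Cr{C:xi}^3 (d + \kappa)\log(en)\|\mvv\|_\infty^2$ after using~\eqref{eq:Butnormbnd} together with the $\|D_s\|_{\rm op} \le 2\kappa/(n-s+1)$ estimate from~\eqref{eq:Ctopbnd} and summing the resulting harmonic-type series $\sum_s \kappa/(n - s + 1)$.

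For the tail part I would write $\widetilde M_n - M_n = (\widetilde M_n - \widetilde M_{n-m}) - (M_n - M_{n-m}) + (\widetilde M_{n-m} - M_{n-m})$ (on the good event the last term vanishes, and off the good event it is handled by the probability bound times a trivial deterministic bound on $|\widetilde M_{n-m}|, |M_{n-m}| \le C\sqrt{n}\|\mvv\|$ or similar, contributing the $C d n \zeta^2 \exp(-cm/\kappa^2\zeta^4)$ term via Lemma~\ref{lem:proplem1.1}-type estimates). The term $\E[(\widetilde M_n - \widetilde M_{n-m})^2]$ is controlled by Lemma~\ref{lem:proplem1.2}, giving $C d \max(1,\zeta^2)\|\mvv\|_\infty^2 m^3$ — wait, that is $m^3$, which is too large; the correct move here is instead to bound the \emph{normalized} tail $M_n - M_{n-m} = \sum_{s > n-m}\eta_s\langle B\mvu_s/\|B\mvu_s\|,(\mvv;0)\rangle$ directly by martingale orthogonality, getting $\sum_{s > n-m}\E[\mathcal Q(\cA_s)]$, and similarly bound $\widetilde M_n - \widetilde M_{n-m}$ after factoring out $\|B\mvu_s\|^2 \le \Cr{C:xi}$, so that each tail contributes $C\Cr{C:xi}\sum_{s > n - m}\E[\mathcal Q(\cA_s)]$. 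The remaining task is to estimate $\sum_{s = n - m + 1}^n \E[\mathcal Q(\cA_s)]$: expanding $\mathcal Q(\cA_s) = (\mvv[\cA_s]^\t(I - Y_sD_sY_s^\t)\mve_{p_s}[\cA_s])^2$ via~\eqref{eq:qformineq}, the ``identity part'' gives $\sum_s \E[\mvv[\cA_s]^\t \mve_{p_s}\mve_{p_s}^\t\mvv[\cA_s]] = \sum_s \E[\mvv[p_s]^2]$; since $p_s$ is uniform on $\cA_s$ and $\cA_s$ is a uniform size-$(n-s+1)$ subset, $\E[\mvv[p_s]^2] = \|\mvv\|^2/n$, so this sums to $\le m\|\mvv\|^2/n \le m\|\mvv\|_\infty^2$ — the source of the $m\|\mvv\|_\infty^2$ term, after bounding $\|\mvv\|^2/n$ crudely; the ``$Y_sD_sY_s^\t$ part'' is handled exactly as in the proof of Lemma~\ref{lem:proplem1.2} using $\|D_s\|_{\rm op}\le 2\kappa/(n-s+1)$ on $\cG_{2,s}$ and paying the $\cG_{2,s}^c$ probabilities, producing the $(d+\kappa)\log(en)\|\mvv\|_\infty^2$ and exponential terms.

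Assembling: the bulk good-event contribution is $C\varepsilon_n^2\|\mvv\|^2 + C\Cr{C:xi}^3(d+\kappa)\log(en)\|\mvv\|_\infty^2$; the tail contributes $C\Cr{C:xi}(m + (d+\kappa)\log(en))\|\mvv\|_\infty^2 + Cdn\zeta^2\exp(-cm/\kappa^2\zeta^4)$; and the off-good-event contribution of $\widetilde M_{n-m} - M_{n-m}$ is absorbed into the exponential term using the probability bounds from Lemmas~\ref{lem:smallevent2} and~\ref{lem:smallevent1} (with $t = n - m$, so that $n - t + 1 = m + 1$ makes the exponent $\sim m/\kappa^2\zeta^4$) together with a deterministic a priori bound on the processes. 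Adding everything and using $\Cr{C:xi} \ge 1$, $\zeta^2 \le \Cr{C:xi}$ to consolidate constants yields precisely~\eqref{eq:intclosetoideal}.

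\textbf{Main obstacle.} The delicate point is the bulk estimate: I must show that replacing $\delta_s$ by $\eta_s$ and simultaneously replacing $\langle B\mvu_s,(\mvv;0)\rangle$ by its normalized version $\langle B\mvu_s,(\mvv;0)\rangle/\|B\mvu_s\|$ only costs $O(\varepsilon_n^2\|\mvv\|^2)$ plus a lower-order term, and this requires both (i) carefully exploiting the \emph{sign-coupling} between $\delta_s$ and $\eta_s$ built into~\eqref{def:delta_t}--\eqref{def:etat} so the errors are $O(\varepsilon_n)$ rather than $O(1)$, and (ii) the summation identity $\sum_{s=1}^{n}\mathcal Q(\cA_s) \lesssim \|\mvv\|^2$, which is the ``skeletal'' analogue of Parseval's identity for the classical Gram--Schmidt orthogonalization of the columns of $B$ restricted to the $(\mvv;0)$ direction — this is exactly where orthogonality $\mvv^\t Y = 0$ from~\eqref{eq:vY} and the structure $B = [I_n; Y^\t]$ enter, and getting the constant right (rather than a spurious $\log n$ or $\kappa$ factor) is the crux. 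The bookkeeping for the $\cG_{2,s}^c$ exceptional events and the deterministic a priori bounds needed off the good event is routine but must be done consistently with the parameter $t = n - m$ so that the exponential rates match those stated.
```
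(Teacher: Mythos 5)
Your plan departs from the paper's route: the paper never conditions on a good event here. It writes $\widetilde M_n-M_n=A_n+B_n$ with $A_n=\sum_s\delta_s\bigl(1-\|B\mvu_s\|^{-1}\bigr)\langle B\mvu_s,(\mvv;0)\rangle$ (normalization error) and $B_n=\sum_s(\delta_s-\eta_s)\|B\mvu_s\|^{-1}\langle B\mvu_s,(\mvv;0)\rangle$ ($\delta$ vs.\ $\eta$ error), both martingales over \emph{all} of $[n]$; $\E[B_n^2]$ is controlled because $\delta_s-\eta_s$ is identically zero in \nameref{label:case2} by construction and is $O(\varepsilon_n)$-small in \nameref{label:case1}, where the good events hold automatically by the case definition, while $\E[A_n^2]$ is bounded unconditionally via \eqref{eq:bnd_cond_expect1} and the $\mathcal{QF}$ estimate \eqref{eq:idealclt2} (inside which the split at $n-m$ and the event $\cG_{2,t}$ appear, but only for $t\le n-m$). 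Your architecture instead restricts the bulk to $\cE_{1,n-m}\cap\cE_{2,n-m}$ and proposes to pay for the complement by ``probability bound times a deterministic a priori bound ($\le C\sqrt n\,\|\mvv\|$), contributing the exponential term.'' This step fails quantitatively: by Lemma~\ref{lem:smallevent1} the $\cG_1$-part of that complement has probability of order $\zeta^2\kappa^2/(\varepsilon_n^2 m)$ — only polynomially (in fact, with the eventual choices $m\asymp\kappa^2\zeta^4\log n$, $\varepsilon_n^2\asymp1/\log n$, merely constant-order) small — so the product with $n\|\mvv\|^2$ is of order $n\|\mvv\|^2$ and cannot be absorbed into \eqref{eq:intclosetoideal}, whose only $\|\mvv\|^2$-term carries $\varepsilon_n^2$ and whose exponential term is tiny. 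There is no Lemma~\ref{lem:proplem1.1}-type exponential bound available for this event. (Relatedly, your claim that ``on the good event the last term $\widetilde M_{n-m}-M_{n-m}$ vanishes'' confuses $\widetilde M$ with $M^{\gsw}$: Lemma~\ref{lem:pivotlemma} identifies $M^{\gsw}_s$ with $\widetilde M_s$ on the good event, never $\widetilde M_s$ with $M_s$ — these two differ at every step through $\|B\mvu_s\|^{-1}$ and $\delta_s$ vs.\ $\eta_s$, which is exactly what this lemma is quantifying.)

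The second gap is in your tail estimate. For $s>n-m$ you propose to handle the $Y_sD_sY_s^{\t}$ part of $\mathcal Q_s$ ``using $\|D_s\|_{\rm op}\le2\kappa/(n-s+1)$ on $\cG_{2,s}$ and paying the $\cG_{2,s}^c$ probabilities.'' But \eqref{eq:smallevent2} gives $\P[\cG_{2,s}^c]\le Cd\exp\bigl(-c\,n(n-s+1)/(\kappa^2\zeta^4 s)\bigr)$, whose exponent is $\asymp(n-s+1)/(\kappa^2\zeta^4)$; for the last $\sim\kappa^2\zeta^4$ rounds this gives no decay at all, and the crude bound available off $\cG_{2,s}$ is of size $\zeta^4\|\mvv\|^2$ per round, so this contribution is of order $\kappa^2\zeta^8\|\mvv\|^2$ — it involves $\|\mvv\|^2$ rather than $\|\mvv\|_{\infty}^2$, carries no small factor, and cannot be folded into $Cdn\zeta^2\exp(-cm/\kappa^2\zeta^4)$. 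The paper's tail treatment (display \eqref{eq:rem1j} in the proof of \eqref{eq:idealclt2}) avoids $\cG_{2,s}$ entirely in this range and instead exploits $\mvv^{\t}Y=0$ through the without-replacement second-moment formula (Lemma~\ref{lem:srsworappli}), giving $\frac1{n-s+1}\E\|\mvv[\cA_s]^{\t}Y_s\|^2\le C\zeta^2\|\mvv\|_{\infty}^2$ per round and hence the $m\|\mvv\|_{\infty}^2$ term; you invoke $\mvv^{\t}Y=0$ only for the Parseval identity, not for this moment computation, which is the place it is actually indispensable. Finally, your sign-coupling claim that $\eta_s$ always has the same sign as $\delta_s$ is false when $U_s$ falls within $O(\varepsilon_n)$ of $1/2$; one only gets $\E[(\delta_s-\eta_s)^2\mid\cF_{s-1/2}]\le C\varepsilon_n$, which is still adequate for the lemma's purpose but should be stated as such. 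Your bulk normalization-error bound and the use of \eqref{eq:identity2}, \eqref{eq:identity5}, \eqref{eq:Ctopbnd} there are sound and parallel the paper; the two issues above, however, are genuine obstructions to reaching \eqref{eq:intclosetoideal} along your route.
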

\begin{proof}
Recalling the definitions of $M_t$ and $\widetilde M_t$ from~\eqref{def:MttildeMt}, we can write
\begin{align*}
&\tilde M_t - M_t \\
&\quad= \underbrace{\sum_{s \in [t]} \delta_s \left(1 - \|B\mv u_s\|^{-1}\right)\left\langle B\mv u_s, \mat{\mv v}{0} \right\rangle}_{A_t} + \underbrace{\sum_{s \in [t]} (\delta_s - \eta_s) \|B\mv
	u_s\|^{-1} \left\langle B\mv u_s, \mat{\mv v}{0} \right\rangle}_{B_t}.
\end{align*}

Consequently, \begin{equation*}
\bE[(\widetilde M_n - M_n)^2] \leq 2 \bE \left[A_n^2\right] + 2 \bE \left[B_n^2\right].
\end{equation*}
We now observe that both $A_t$ and $B_t$ are $(\cF_t)_{t = 0}^n$-martingales with mean
$0$. Hence by the orthogonality of martingale differences, we can write
\begin{equation*}
\begin{split}
	\bE \left[A_n^2\right] = \sum_{t \in [n]} &\bE \left[\delta_t^2 \left(1 - \|B\mv
	u_t\|^{-1}\right)^2\left\langle B\mv u_t, \mat{\mv v}{0} \right\rangle^2\right]\\
	\le C \sum_{t \in [n]} &\bE \left[\left(1 - \|B\mv u_t\|^{-2}\right)^2\left\langle B\mv u_t, \mat{\mv v}{0} \right\rangle^2\right] \\
	= C \sum_{t \in [n]} &\bE \left[\left( \|B\mv u_t\|^{2} - 1\right)^2 \frac{1}{\|B\mv u_t\|^{4}} \left\langle B\mv u_t, \mat{\mv v}{0} \right\rangle^2\right] \\
	\stackrel{\eqref{eq:identity5}}{=} C \sum_{t \in [n]} &\bE \left[(\|B\mv u_t\|^{2} - 1)^2 \cQ_t\right]
	\stackrel{\eqref{eq:identity2}}{\le} C \cdot \Cr{C:xi}\sum_{t \in [n]}  \bE \left[(\|B\mv u_t\|^{2} - 1) \cQ_t\right]
\end{split}
\end{equation*}
where in the first inequality we used the facts that $|\delta_t| \leq 2$ almost surely and that $\|B\mv u_t\| \geq 1$ which we noted after~\eqref{eq:identity2}.

\smallskip

We now proceed to bound the expectations in the last line above.  In the sequel, we will use the
notation $a_t = n - t + 1 = |\cA_t|$ (recall Observation~\ref{obs:swor}) and also use $p$ instead
of $p_t$.  
For any $t \in [n]$, we obtain from~\eqref{eq:bnd_cond_expect1} proved in Section~\ref{sec:ideal} below that
\begin{equation*}
\bE \left[(\norm{B\mvu_{t}}^{2} - 1)\cQ_t \, | \, \cF_{t-1}\right] \le \frac{2\Cr{C:xi}}{n - t + 1}\, \sum_{j \in \cA_t} v_j^2\mvy_j^{\t} D_t \mvy_j + \frac {2\Cr{C:xi}^2}{n - t + 1}\,\mvv[\cA_t]^{\t}Y_tD_tY_t^{\t}\mvv[\cA_t].
\end{equation*}
We bound the expectation of the quantity on the right-hand side above (summed over $t$ in $[n]$) in
a {\em delicate} bound~\eqref{eq:idealclt2} proved as part of Lemma~\ref{lem:idealclt} in the next
section which finally yields
\begin{equation}\label{eq:intclosetoideal1}
\bE \left[A_n\right]^2 \leq C\cdot\Cr{C:xi}^3 \left((m + \kappa\log en) \|\mv v\|_{\infty}^2  + d \log en \|\mvv\|_{\infty}^2  + C d  n \zeta^2 \exp(-c (\kappa \zeta^2)^{-2}m) \right).
\end{equation}
%
%
%

Next we will bound $\bE \left[B_n^2\right]$. By the martingale property, we can write
\begin{equation*}
\begin{split}
	&\bE \left[B_n\right]^2 \\
	&= \sum_{t \in [n]} \bE \left[(\delta_t -\eta_t)^2 \|B\mv u_t\|^{-2} \left\langle B\mv u_t, \mat{\mv v}{0} \right\rangle^2\right] \le C\varepsilon_n^2 \sum_{t \in [n]} \bE \left[\eta_t^2 \|B\mv u_t\|^{-2} \left\langle B\mv u_t, \mat{\mv v}{0} \right\rangle^2\right]\\
	&\stackrel{\eqref{def:MttildeMt}}{=} C\varepsilon_n^2 \bE\left[M_n^2\right] \stackrel{\eqref{form:sigmansq}}{=} C\varepsilon_n^2 \|\mvv\|^2
\end{split}
\end{equation*}
where in the second step we used the definitions of $\delta_t$ and $\eta_t$ from~\eqref{def:delta_t},
\eqref{def:etat} and~\eqref{eq:updatezt} along with the bound given by~\eqref{depart:delta}. Combined
with~\eqref{eq:intclosetoideal1}, this yields~\eqref{eq:intclosetoideal}.
\end{proof}

We are now ready to prove Proposition~\ref{prop:main1}.
\begin{proof}[Proof of Proposition~\ref{prop:main1}]
It is enough to show that
\begin{equation}\label{eq:inprob2}
\frac{1}{\|\mvv\|}|\,M_n - \tilde{M}_n| \xrightarrow[n \to \infty]{{\rm P}} 0
\end{equation}
and
\begin{equation}\label{eq:inprob1}
\frac{1}{\|\mvv\|}\,|M^{{\gsw}}_n - \tilde{M}_n| \xrightarrow[n \to \infty]{{\rm P}} 0
\end{equation}
where ${\rm P}$ indicates {\em convergence in probability}. Recall that there is a parameter
$\varepsilon_n$ that goes in the definition of $\tilde{M}_n$. To this end, we choose $$\varepsilon_n = \min\left(\frac{1}{\sqrt{\log en}}, \frac13\right)$$ and proceed to show~\eqref{eq:inprob2}
and~\eqref{eq:inprob1}.

To show~\eqref{eq:inprob2} it suffices to show that
\begin{equation}\label{eq:MnMntildeell2}
\lim_{n \to \infty}\frac{1}{\|\mvv\|^2}\bE \left[(M_n - \tilde{M}_n)^2\right] = 0.
\end{equation}
For this, we can set $$m =  C \kappa^2 \zeta^4 \log en$$ for an appropriately large enough constant $C$ and use Lemma~\ref{lem:proplem2} to deduce~\eqref{eq:inprob1} provided
\begin{equation*}
\lim_{n \to \infty} \max(m^{1/2}, d^{1/2} (\log n)^{1/2}, \kappa^{1/2}(\log n)^{1/2}) \frac{\|\mvv\|_\infty}{\|\mvv\|} = 0
\end{equation*}
(recall that $d \le n$) which follows from the first part of assumption~\eqref{eq:formal_cond} in
Theorem~\ref{thm:formal}. We also need to show that $m$ as chosen above is at
most $n$ for all $n$ large enough. But this follows from assumption~\eqref{eq:formal_cond} in view of the facts that
$\tfrac{1}{\sqrt{n}} \le \tfrac{\|\mvv\|_{\infty}}{\|\mvv\|}$ and
\begin{equation}\label{bnd_kappa}
\kappa = \frac{n}{\lambda_{{\rm min}}(Y^{\t}Y)}	\ge \frac{dn}{\|Y\|_{\frob}^2} \stackrel{\eqref{def:zeta}}{\ge} \frac{dn}{n \zeta^2} = \zeta^{-2}
\end{equation}
(notice that $\zeta^{-1}$ is bounded~\eqref{def:zeta} since $\phi$ is bounded away from $0$ by our
assumption).


In order to show~\eqref{eq:inprob1}, we again set $$m =  C \kappa^2 \zeta^4 \log en$$ for some suitably large constant $C$ and write
\begin{equation}\label{eq:M_ngsM_ntilde}
M^{{\gsw}}_n - \tilde{M}_n = \underbrace{M^{{\gsw}}_n - M^{{\gsw}}_{n - m}}_{T_1} + \underbrace{M^{{\gsw}}_{n - m} - \tilde{M}_{n - m}}_{T_2} + \underbrace{\tilde{M}_{n - m} - \tilde{M}_{n}}_{T_3}.
\end{equation}
We can use Lemma~\ref{lem:proplem1.2} and a similar reasoning as before to argue that both
$\|\mvv\|^{-2}\bE \left[T_1^2\right]$ and $\|\mvv\|^{-2}\bE \left[T_3^2\right]$ converge to $0$ under
assumption~\eqref{eq:formal_cond}. This implies that $T_1$ and $T_3$ converge to $0$ in probability. On the other hand, we have $m \ge 6\Cr{C:xi}\zeta^2\kappa$ for large enough $n$ in view
of~\eqref{bnd_kappa}. Hence we can use Lemma~\ref{lem:proplem1.1} to show that $\mathbb P[T_2 > 0] \rightarrow 0$ which implies that $T_2$ converges to $0$ in	probability as $n \to \infty$.

Together these yield~\eqref{eq:inprob1} in view of~\eqref{eq:M_ngsM_ntilde} completing the proof of Proposition~\ref{prop:main1}.
\end{proof}

\section{Asymptotic normality of $M_n$ and the proof of Theorem~\ref{thm:formal}}\label{sec:ideal}
In this section, we conclude the proof of Theorem~\ref{thm:formal} by combining
Proposition~\ref{prop:main1} from Section~\ref{sec:reduction_ideal} and the CLT for $M_n$ proved in
Proposition~\ref{prop:cltideal} below. The latter is the main result of this section.
\begin{proposition}\label{prop:cltideal}
Under the same assumptions as in Theorem~\ref{thm:formal}, we have
\begin{equation*}
\frac{M_n}{\|\mvv\|} \xrightarrow[n \to \infty]{{\rm law}} \N(0, 1).
\end{equation*}
\end{proposition}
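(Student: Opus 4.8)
The plan is to apply a martingale central limit theorem to $\{M_t\}_{t=0}^{n}$ from~\eqref{def:MttildeMt}, the decisive preliminary observation being that its sum of squared increments is \emph{deterministic}. Write $\mvq_t\coloneqq B\mvu_t/\|B\mvu_t\|$; by construction (Section~\ref{sec:martingales}) $(\mvq_1,\dots,\mvq_n)$ is the classical Gram--Schmidt orthonormalization of the columns $(\mvb_{p_1},\dots,\mvb_{p_n})$ of $B$, so --- \emph{whatever} the random permutation $\pi=(p_1,\dots,p_n)$ --- the vectors $\mvq_1,\dots,\mvq_n$ form an orthonormal basis of ${\rm ColSp}(B)$. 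Since $Y^{\t}\mvv=0$ by~\eqref{eq:vY}, $\mat{\mvv}{0}=B\mvv\in{\rm ColSp}(B)$, and Parseval's identity gives, surely,
\begin{equation*}
\sum_{t\in[n]}(M_t-M_{t-1})^2 = \sum_{t\in[n]}\eta_t^2\left\langle\mvq_t,\mat{\mvv}{0}\right\rangle^{2} = \sum_{t\in[n]}\left\langle\mvq_t,\mat{\mvv}{0}\right\rangle^{2} = \left\|\mat{\mvv}{0}\right\|^{2} = \|\mvv\|^{2}.
\end{equation*}
Taking expectations (and using orthogonality of the martingale differences), this already yields $\E[M_n^2]=\|\mvv\|^2$, i.e.\ the variance identity~\eqref{form:sigmansq}, hence~\eqref{formula:var} at the level of $M_n$.

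Given this, I would deduce $M_n/\|\mvv\|\xrightarrow[n\to\infty]{{\rm law}}N(0,1)$ from a standard martingale central limit theorem (of McLeish type): with $X_{n,t}\coloneqq(M_t-M_{t-1})/\|\mvv\|$, the conditional-variance hypothesis $\sum_t X_{n,t}^2\to1$ is a deterministic equality, the requisite boundedness of $\E[\max_t X_{n,t}^2]$ is automatic since $\max_t X_{n,t}^2\le\sum_t X_{n,t}^2=1$, and the only remaining hypothesis is the negligibility of the maximal jump,
\begin{equation*}
\frac1{\|\mvv\|}\max_{t\in[n]}|M_t-M_{t-1}| \xrightarrow[n\to\infty]{{\rm P}} 0 .
\end{equation*}
Thus the whole argument reduces to this statement.

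To verify it I would set $m=C\kappa^2\zeta^4\log(en)$ for a large constant $C$ (so $m\le n$ for $n$ large, since $\kappa^2\log n=o(\sqrt n)$ by~\eqref{eq:formal_cond} and $\|\mvv\|_{\infty}/\|\mvv\|\ge n^{-1/2}$) and split the range of $t$ at $n-m$. For the \emph{bulk} $t\le n-m$: by~\eqref{eq:identity3}, $M_t-M_{t-1}=\eta_t\|B\mvu_t\|\big(\mvv[p_t]-\mvy_{p_t}^{\t}D_tY_t^{\t}\mvv[\cA_t]\big)$, and combining $\|B\mvu_t\|^2\le\Cr{C:xi}$ (by~\eqref{eq:identity2}), the always-valid bound $\|Y_t^{\t}\mvv[\cA_t]\|=\|\sum_{i\in\cA_t}\mvv[i]\mvy_i\|\le\zeta|\cA_t|\,\|\mvv\|_{\infty}$, and $\|D_t\|_{{\rm op}}\le 2\kappa/|\cA_t|$ on $\cG_{2,t}$ (by~\eqref{eq:Ctopbnd}) yields $|M_t-M_{t-1}|\le\sqrt{\Cr{C:xi}}\,(1+2\kappa\zeta^2)\,\|\mvv\|_{\infty}$ on $\cG_{2,t}$. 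Since $\P[(\bigcap_{s\le n-m}\cG_{2,s})^c]\le\sum_{s\le n-m}\P[\cG_{2,s}^c]\to0$ by~\eqref{eq:smallevent2} (using $|\cA_s|\ge m$ for $s\le n-m$), the bulk maximum is at most $\sqrt{\Cr{C:xi}}\,(1+2\kappa\zeta^2)\,\|\mvv\|_{\infty}$ with probability tending to $1$. For the \emph{tail} $t>n-m$, where $|\cA_t|<m$: $(M_t-M_{t-1})^2=\|B\mvu_t\|^2\mathcal Q_t$ with $\mathcal Q_t$ as in~\eqref{eq:identity5}, and since $I-Y_tD_tY_t^{\t}$ is a contraction we have $\mathcal Q_t\le\|\mvv[\cA_t]\|^2$, whence $(M_t-M_{t-1})^2\le\Cr{C:xi}\,m\,\|\mvv\|_{\infty}^2$ deterministically. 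Because $\phi$ is bounded away from $0$ --- so $\zeta$ and $\Cr{C:xi}$ are bounded and $\kappa$ is bounded below by a positive constant via~\eqref{bnd_kappa} --- both $\sqrt{\Cr{C:xi}}\,(1+2\kappa\zeta^2)\,\|\mvv\|_{\infty}/\|\mvv\|$ and $\sqrt{\Cr{C:xi}\,m}\,\|\mvv\|_{\infty}/\|\mvv\|$ tend to $0$ under~\eqref{eq:formal_cond}, which establishes the negligibility of the maximal jump and finishes the proof. (The quantitatively sharp version of this bookkeeping is the content of the conditional-expectation bound~\eqref{eq:bnd_cond_expect1} and the ``delicate'' estimate~\eqref{eq:idealclt2} of Lemma~\ref{lem:idealclt}, which combine the without-replacement moment formulas of Proposition~\ref{thm:srswor_moment} --- exploiting $Y^{\t}\mvv=0$ to extract a factor $|\cA_t|/n$ --- with the operator-norm control of $D_t$ on $\cG_{2,t}$, and is the same machinery reused in Section~\ref{sec:reduction_ideal}.)

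The step I expect to be the main obstacle is precisely this last verification. Near $t=n$ the matrix $Y_t^{\t}Y_t$ no longer concentrates around $\tfrac{|\cA_t|}{n}Y^{\t}Y$, so the last $\Theta(\kappa^2\zeta^4\log n)$ steps must be controlled separately by cruder means; and in the bulk one must carefully marry the orthogonality $Y^{\t}\mvv=0$ (through the sampling-without-replacement second-moment formula), the high-probability bound $\|D_t\|_{{\rm op}}\le 2\kappa/|\cA_t|$, and the exponentially small probabilities $\P[\cG_{2,t}^c]$, so that the resulting requirement is exactly~\eqref{eq:formal_cond} and not something stronger. The remaining ingredients --- the Parseval identity for the quadratic variation and the invocation of the martingale CLT --- are routine.
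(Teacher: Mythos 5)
Your proposal is correct, but it takes a genuinely different route from the paper's. The paper proves the CLT for $M_n$ by verifying the classical conditional-Lindeberg/Lyapunov conditions: besides the Parseval identity (its~\eqref{eq:sigman_approx}, which is exactly your observation that the raw quadratic variation equals $\|\mvv\|^2$ surely), it shows $\sum_t\E[\Delta_t^4]/\|\mvv\|^4\to0$ (Lemma~\ref{lem:lyapunov}) and that the \emph{conditional} quadratic variation $Q_n=\sum_t\E[\Delta_t^2\mid\cF_{t-1}]$ --- which, unlike the raw sum of squares, is random because the conditioning averages only over the pivot $p_t$ --- converges to $\|\mvv\|^2$ in probability (Lemma~\ref{lem:idealclt}); both steps require the without-replacement moment formulas of Proposition~\ref{thm:srswor_moment}, the cancellation $\mvv^{\t}Y=0$, and the events $\cG_{2,t}$. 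You instead use a McLeish-type martingale CLT whose variance hypothesis is on the raw squares, so the deterministic identity $\sum_t\Delta_t^2=\|\mvv\|^2$ settles it for free, and the whole proof reduces to $\max_t|\Delta_t|/\|\mvv\|\to0$ in probability, which your two-regime bound (union bound over $\cG_{2,t}$ together with $\|D_t\|_{{\rm op}}\le 2\kappa/|\cA_t|$ and $\|Y_t^{\t}\mvv[\cA_t]\|\le\zeta|\cA_t|\|\mvv\|_\infty$ in the bulk; the crude deterministic bound $\Delta_t^2\le\Cr{C:xi}\,m\,\|\mvv\|_\infty^2$ in the last $m$ steps) establishes correctly, in fact under a weaker hypothesis than~\eqref{eq:formal_cond}. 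Two remarks. First, if you quote Hall--Heyde's Theorem~3.2 verbatim it carries a nesting condition on the filtrations across $n$ which is not available here; since your limit is the constant $1$ and $\sum_tX_{n,t}^2=1$ forces $|X_{n,t}|\le1$, the McLeish product $\prod_t(1+i\lambda X_{n,t})$ is uniformly bounded, $\E\prod_t(1+i\lambda X_{n,t})=1$ exactly, and the nesting is genuinely unnecessary --- but this deserves a sentence in the write-up. Second, your shortcut does not render the paper's heavier estimates superfluous for the overall result: the conditional-expectation bound~\eqref{eq:bnd_cond_expect1} and the estimate~\eqref{eq:idealclt2} are reused in Section~\ref{sec:reduction_ideal} (proof of Lemma~\ref{lem:proplem2}) to transfer the CLT from $M_n$ to $M_n^{{\gsw}}$, so Theorem~\ref{thm:formal} still needs that machinery; as a proof of Proposition~\ref{prop:cltideal} alone, however, your argument is simpler and complete.
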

The proof of Proposition~\ref{prop:cltideal} occupies the majority of the remainder of this article. But
before that let us finish the proof of Theorem~\ref{thm:formal} assuming this result.
\begin{proof}[Proof of Theorem~\ref{thm:formal}]
We obtain from Propositions~\ref{prop:main1} and~\ref{prop:cltideal} that,
\begin{equation}\label{eq:M_ngsCLT}
\frac{M_n^{{\gsw}}}{\|\mvv\|} \xrightarrow[n \to \infty]{{\rm law}} \N(0, 1)
\end{equation}
(recall that $\langle \mvz_n^{{\gsw}}, \mvv \rangle = M_n^{{\gsw}}$). However, since  
$\phi$ is bounded away from $0$ and the matrix $B$ in definition~\ref{def:B} and in
\cite{harshaw2019balancing} (see~\eqref{def:B_intro}) only differ by a factor $1/\sqrt{\phi}$, it follows
from \cite[Theorem~6.1]{harshaw2019balancing} that $\tfrac{M_n^{{\gsw}}}{\|\mvv\|}$ is a subgaussian variable with variance parameter $1/\phi$. In particular, the random variables $\tfrac{(M_n^{{\gsw}})^2}{\|\mvv\|^2}$ are {\em uniformly integrable} in $n$. Hence by classical 
results on uniform integrability, this implies together with~\eqref{eq:M_ngsCLT}:
\begin{equation*}
\lim_{n \to \infty} \frac{\var\left[M_n^{{\gsw}}\right]}{\|\mvv\|^2} = 1  \mbox{ and hence }
\frac{M_n^{{\gsw}}}{\sqrt{\var\left[M_n^{{\gsw}}\right]}}  \xrightarrow[n \to \infty]{{\rm law}} \N(0, 1)
\end{equation*}
which finishes the proof.
\end{proof}
We now return to the
\begin{proof}[Proof of Proposition~\ref{prop:cltideal}]
We will verify Lindeberg's conditions for the CLT of the  triangular array of martingales $\{M_t\}_{t
= 0}^n$ (recall that we keep the dependence on $n$ implicit). To this end, let us  define the {\em conditional quadratic variation} $Q_n$ of $M_n$ 
as 
\begin{equation} \label{def:Qn}
Q_n = \sum_{t \in [n]} \bE [\Delta_t^2\mid \cF_{t - 1}] \mbox{ where } \Delta_t \coloneqq M_t - M_{t-1} = \eta_t \left\langle \frac{B\mv u_t}{\|B \mvu_t\|}, \mat{\mv v}{0} \right\rangle
\end{equation}
(recall~\eqref{def:MttildeMt} as well the recursive definition of the $\sigma$-algebra $\cF_t$ from~\eqref{def:F_t} in Section~\ref{sec:stoch_proc}). Notice that $\bE [Q_n] =
\var[M_n] \eqqcolon \sigma_n^2$. Lindeberg's conditions are implied by  Lyapunov's moment
condition which in our case are summarized below (see, e.g., \cite{HallHeyde80book}):
\begin{enumerate}[label=(\roman*)]
\item $\frac{\sigma_n}{\|\mvv\|} \rightarrow 1$,
\item $\frac{1}{\|\mvv\|^4}\sum_{t \in [n]} \bE [ \Delta_t^4] \rightarrow 0,
\mbox{ and}$

\item $\frac{Q_n}{\|\mvv\|^2} \rightarrow 1 \mbox{ in probability}$
\end{enumerate}
as $n \to \infty$.
We now verify each of these three conditions.

\smallskip

\noindent {\em Verifying condition~(i).}
The following observation is crucial for our proof:
\begin{equation}\label{eq:sigman_approx}
\sum_{t \in [n]}\left\langle \frac{B\mv u_t}{\|B \mvu_t\|}, \mat{\mv v}{0} \right\rangle^2 = \| \mv v \|^2.
\end{equation}
Let us first show~(i) using this claim. 
In view of the description of the underlying stochastic processes in Section~\ref{sec:stoch_proc},
observe that $|\eta_t| = 1$ for all $t \in [n]$ 
and hence
\begin{equation*}\label{eq:detlatstarsqFt-1/2}
\bE \left [ \Delta_t^2 \right ] = 
\bE \left[\eta_t^2\left\langle \frac{B\mv u_t}{\|B \mvu_t\|}, \mat{\mv v}{0} \right\rangle^2\right] = \bE \left[\left\langle \frac{B\mv u_t}{\|B \mvu_t\|}, \mat{\mv v}{0} \right\rangle^2\right].
\end{equation*}
Therefore, by~\eqref{eq:sigman_approx} and the orthogonality of martingale differences, we get
\begin{equation}\label{form:sigmansq}
\sigma_n^2 = \bE[M_n^2] = \sum_{t \in [n]} \bE[\Delta_t^2] = \bE\left[ \|\mvv\|^2\right] = \|\mvv\|^2
\end{equation}
which finishes the verification of condition~(i).

It remains to verify~\eqref{eq:sigman_approx}. Recall from the discussion after
\eqref{def:MttildeMt} that the vectors $\left(\tfrac{B\mv u_1}{\|B \mvu_1\|}, \ldots, \tfrac{B\mv u_n}{\|B
\mvu_n\|}\right)$ form an ONB for $\colsp(B)$ where
\begin{equation*}
B = \mat{I_n}{Y^{\t}}
\end{equation*}
(recall~\eqref{def:B}) and hence
\begin{equation*}
\sum_{t \in [n]}\left\langle \frac{B\mv u_t}{\|B \mvu_t\|}, \mat{\mv v}{0} \right\rangle^2 = \|{\rm
Proj}_{\colsp(B)}(\mvv) \|^2.
\end{equation*}
However, since
\begin{equation*}
\mat{\mv v}{0} = B \mvv
\end{equation*}
(recall that $\mvv^{\t}Y = \mv0$),~\eqref{eq:sigman_approx} follows.

\smallskip

\noindent {\em Verifying condition~(ii).}  	We need the following moment bound.
\begin{lemma}\label{lem:lyapunov}
For any $m \in [n]$, we have
\begin{equation*}
\begin{split}
	&\sum_{t \in [n]} \bE \left[\Delta_t^{4}\right] \\
	&\quad\le C \Cr{C:xi}^2 \|\mvv\|_{\infty}^2\|\mvv\|^2  + C d \Cr{C:xi}^2 \zeta^8\left(\|\mvv\|_{\infty}^4\ m^3 + \|\mvv\|_{\infty}^4 \frac{\kappa^{4}}{m} +  n^3\|\mvv\|^4\exp\left(- c (\kappa\zeta^2)^{-2} m\right)\right).
\end{split}	\end{equation*}
\end{lemma}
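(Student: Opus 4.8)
The plan is to reduce the fourth moments $\mathbb E[\Delta_t^4]$ to conditional moments of the quadratic quantities $\mathcal Q_t$ of Lemma~\ref{lem:matrix_identities}, and then split the time axis into a ``bulk'' range $t\le n-m$ and a ``tail'' range $t>n-m$, treated by different means. Since $|\eta_t|=1$, the identities \eqref{eq:identity2}--\eqref{eq:identity5} give, with $w_{t,j}:=\mvv[\cA_t]^\t(I-Y_tD_tY_t^\t)\mve_j[\cA_t]=v_j-\mathbf g_t^\t\mvy_j$, $\mathbf g_t:=D_tY_t^\t\mvv[\cA_t]\in\R^d$ and $a_t:=|\cA_t|=n-t+1$,
$$\Delta_t^4=\norm{B\mvu_t}^{-4}\langle\mvu_t,\mvv\rangle^4=\norm{B\mvu_t}^{4}\,\mathcal Q_t^{2}\le\Cr{C:xi}^2\,\mathcal Q_t^{2},\qquad\mathcal Q_t=w_{t,p_t}^2.$$
Conditionally on $\cF_{t-1}$ the set $\cA_t$ is determined and $p_t$ is uniform on $\cA_t$, so $\mathbb E[\Delta_t^4\mid\cF_{t-1}]\le\tfrac{\Cr{C:xi}^2}{a_t}\sum_{j\in\cA_t}w_{t,j}^4$. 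Two structural facts are used throughout: $\sum_{j\in\cA_t}w_{t,j}^2=\mvv[\cA_t]^\t(I-Y_tD_tY_t^\t)^2\mvv[\cA_t]\le\|\mvv[\cA_t]\|^2$ (because $0\preceq I-Y_tD_tY_t^\t\preceq I$), and $\|\mathbf g_t\|^2=\mvv[\cA_t]^\t Y_tD_t^2Y_t^\t\mvv[\cA_t]\le\|D_t\|_{\rm op}\|\mvv[\cA_t]\|^2$. I will also use that $p_t$ is in fact uniform on $[n]$ (Observation~\ref{obs:swor} together with the uniform choice inside $\cA_t$), so that $\mathbb E[v_{p_t}^4]=\tfrac1n\sum_{j\in[n]}v_j^4\le\tfrac1n\|\mvv\|_\infty^2\|\mvv\|^2$.

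For the tail $t>n-m$ only crude bounds are needed: $\|D_t\|_{\rm op}\le1$ and $\|\mvv[\cA_t]\|^2\le a_t\|\mvv\|_\infty^2\le m\|\mvv\|_\infty^2$, hence $\|\mathbf g_t\|^2\le m\|\mvv\|_\infty^2$ and $w_{t,j}^4\le8v_j^4+8(\mathbf g_t^\t\mvy_j)^4\le8v_j^4+8\zeta^4m^2\|\mvv\|_\infty^4$; averaging over $j\in\cA_t$, taking expectations, and summing over the $m$ tail indices gives $\sum_{t>n-m}\mathbb E[\Delta_t^4]\le C\Cr{C:xi}^2\|\mvv\|_\infty^2\|\mvv\|^2+C\Cr{C:xi}^2\zeta^4m^3\|\mvv\|_\infty^4$. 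For the bulk $t\le n-m$ I further split according to $\cG_{2,t}$. On $\cG_{2,t}$ one has, by Weyl's inequality and \eqref{eq:Ctopbnd}, $\|D_t\|_{\rm op}\le2\kappa/a_t$ and $\|Y_t\|_{\rm op}^2\le\tfrac32a_t\zeta^2$; inserting these into the structural bounds controls $\|\mathbf g_t\|$, $\mathbf g_t^\t Y_t^\t Y_t\mathbf g_t$, and hence $\sum_{j\in\cA_t}(\mathbf g_t^\t\mvy_j)^4\le(\max_{j\in\cA_t}(\mathbf g_t^\t\mvy_j)^2)\sum_{j\in\cA_t}(\mathbf g_t^\t\mvy_j)^2$, in terms of $\kappa,\zeta,a_t$ and powers of $\|\mvv[\cA_t]\|$ and $\|Y_t^\t\mvv[\cA_t]\|$. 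For the latter the orthogonality $\mvv^\t Y=0$ from \eqref{eq:vY} is essential, as it replaces $\|Y_t^\t\mvv[\cA_t]\|^2=\|\sum_{i\in\cA_t}v_i\mvy_i\|^2$ by $\|\sum_{i\in[n]\setminus\cA_t}v_i\mvy_i\|^2$, a sum over the small complement of size $t-1$. Evaluating the remaining expectations of $\|\mvv[\cA_t]\|^2=\sum_{i\in\cA_t}v_i^2$ (and of $\|Y_t^\t\mvv[\cA_t]\|^2$) via the sampling-without-replacement moment formulas of Proposition~\ref{thm:srswor_moment} turns $\|\mvv[\cA_t]\|^2$ into its mean $\tfrac{a_t}n\|\mvv\|^2$ up to lower-order terms; crucially, the $\tfrac{a_t}n$ factors so produced cancel the $\tfrac1{a_t}$ in front and prevent a spurious $\log n$ from the sum $\sum_t a_t^{-1}$. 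The resulting per-time estimates, summed using $\sum_{t\le n-m}a_t^{-2}=\sum_{a>m}a^{-2}\le m^{-1}$, give the $\kappa^4/m$ contribution, while $\sum_t\tfrac1n\sum_{j\in[n]}v_j^4\le\|\mvv\|_\infty^2\|\mvv\|^2$ reproduces (a multiple of) the first term. Finally, on the events $\cG_{2,t}^c$, whose probabilities are at most $Cd\exp(-c(\kappa\zeta^2)^{-2}m)$ for $t\le n-m$ by Lemma~\ref{lem:smallevent2} (since $a_t>m$ there), the crude bound $\Delta_t^4\le C\Cr{C:xi}\zeta^2\|\mvv\|^4$ and a union bound over $t\in[n]$ contribute the remaining $n^3\|\mvv\|^4\exp(-c(\kappa\zeta^2)^{-2}m)$ term, with room to spare. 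Adding the bulk and tail estimates yields the claim.

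The main obstacle is the bulk estimate, where three mechanisms must cooperate: (i) the event $\cG_{2,t}$ is needed to gain the decay $\|D_t\|_{\rm op}\le2\kappa/a_t$, together with the companion bound on $\|Y_t\|_{\rm op}$, which ultimately produces the $\kappa$-powers weighted by the summable $a_t^{-2}$; (ii) the orthogonality $\mvv^\t Y=0$ must be invoked to collapse $\|Y_t^\t\mvv[\cA_t]\|$ to a sum over the small complement $[n]\setminus\cA_t$; and (iii) one must take expectations at the $\cF_{t-1}$-conditional level via Proposition~\ref{thm:srswor_moment}, so that the mean $\tfrac{a_t}n\|\mvv\|^2$ of $\|\mvv[\cA_t]\|^2$ appears and neutralizes the $\tfrac1{a_t}$ prefactor. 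Making all three cooperate while keeping the final powers of $m$, $\kappa$, $\zeta$ and $d$ in the advertised positions — rather than accumulating a stray factor $n$ or $\log n$ over the $n$ summands — is the delicate bookkeeping the lemma demands; the tail part, by contrast, is routine once one observes that every quantity there is bounded by $m$.
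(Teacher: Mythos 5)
Your proposal follows essentially the same route as the paper's proof: the reduction $\Delta_t^4\le \Cr{C:xi}^2\mathcal Q_t^2$ via \eqref{eq:identity2}--\eqref{eq:identity5}, conditioning on $\cF_{t-1}$ so that $p_t$ is uniform on $\cA_t$, splitting $\mathcal Q_t$ into the $v_{p_t}$-part and the $D_tY_t^{\t}\mvv[\cA_t]$-part, the bulk/tail split at $t=n-m$, the use of $\cG_{2,t}$ with \eqref{eq:Ctopbnd} to gain $\|D_t\|_{{\rm op}}\le 2\kappa/(n-t+1)$, the fourth-moment bound for $\|\mvv[\cA_t]^{\t}Y_t\|$ exploiting $\mvv^{\t}Y=0$ (Lemma~\ref{lem:srsworappli} via Proposition~\ref{thm:srswor_moment}), and Lemma~\ref{lem:smallevent2} plus a crude deterministic bound on $\cG_{2,t}^c$. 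Your bulk bookkeeping, once written out, does reproduce the terms $C\Cr{C:xi}^2\|\mvv\|_{\infty}^2\|\mvv\|^2$ and $Cd\Cr{C:xi}^2\zeta^8\|\mvv\|_{\infty}^4\kappa^4/m$.

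The loose end is the tail $t>n-m$ (and, more mildly, the $\cG_{2,t}^c$ contribution). There you replace the without-replacement moment bound by the purely deterministic estimate $\|\mathbf g_t\|^2\le\|\mvv[\cA_t]\|^2\le m\|\mvv\|_{\infty}^2$ (with $\mathbf g_t=D_tY_t^{\t}\mvv[\cA_t]$), which yields a tail term of order $\Cr{C:xi}^2\zeta^4\|\mvv\|_{\infty}^4m^3$. This is \emph{not} dominated by the stated term $Cd\Cr{C:xi}^2\zeta^8\|\mvv\|_{\infty}^4m^3$: that comparison would require $1\le Cd\zeta^4$, but Theorem~\ref{thm:formal} only assumes $\phi$ bounded away from $0$, so $\zeta=\sqrt{(1-\phi)/\phi}$ is bounded above yet may tend to $0$ (e.g.\ $\phi\to1$). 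Hence ``adding the bulk and tail estimates yields the claim'' is not quite correct verbatim; the same $\zeta$-power mismatch appears in your $\cG_{2,t}^c$ bound ($\approx d\Cr{C:xi}^3 n\|\mvv\|^4e^{-c(\kappa\zeta^2)^{-2}m}$ versus the stated $d\Cr{C:xi}^2\zeta^8n^3\|\mvv\|^4e^{-c(\kappa\zeta^2)^{-2}m}$). The discrepancy is harmless for the intended application (with $m=C\kappa^2\zeta^4\log en$ and $\zeta\le C$ your version still vanishes after dividing by $\|\mvv\|^4$ under \eqref{eq:formal_cond}), but to prove the lemma as stated you should treat the tail exactly as the bulk, only with the trivial bound $\|D_t\|_{{\rm op}}\le1$: bound the conditional expectation by $\zeta^4\|\mvv[\cA_t]^{\t}Y_t\|^4$ and then apply the fourth-moment bound of Lemma~\ref{lem:srsworappli} together with $\|Y\|_{{\rm Frob}}^2\le n\zeta^2$ (this is the paper's step \eqref{eq:srsmomentbnd}, and it is precisely where the factors $d$ and $\zeta^8$ in the statement originate); similarly, on $\cG_{2,t}^c$ use $\|\mvv[\cA_t]^{\t}Y_t\|^4\le\zeta^4(n-t+1)^2\|\mvv\|^4$ to recover the stated $n^3\|\mvv\|^4$ exponential term.
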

One can now check that by setting
$$m = C \kappa^2 \zeta^4\log en$$
for a suitably large enough constant $C$, the right hand side above converges to $0$ as soon as
\begin{equation}\label{lim:lyapunov}
\lim_{n \to \infty} d^{1/4}\frac{\|\mvv\|_{\infty}}{\|\mvv\|}\kappa^{3/2}(\log n)^{3/4} = 0
\end{equation}
(recall that $d \le n$ and $\kappa \ge \zeta^{-2}$ by~\eqref{bnd_kappa}) which is implied by the first part of assumption~\eqref{eq:formal_cond} in
Theorem~\ref{thm:formal}. However, we also need to ensure that $m$, as chosen above, is at
most $n$ for all large enough $n$. But this follows from ~\eqref{lim:lyapunov} in view of the facts that
$\tfrac{1}{\sqrt{n}} \le \tfrac{\|\mvv\|_{\infty}}{\|\mvv\|}$ and~\eqref{bnd_kappa}.
We now proceed to the
\begin{proof}[Proof of Lemma~\ref{lem:lyapunov}]
Since $\mvu_t = \mvu(p_t, \cA_t)$, we have in view of~\eqref{eq:identity3} and~\eqref{def:YD},
\begin{equation}\label{eq:Butinprod}
\begin{split}
\left \langle B\mv u_t, \mat{\mv v}{0} \right \rangle =
\norm{B\mvu_{t}}^{2}\mvv[\cA_t]^{\t}(I_{a_t} - Y_tD_tY_t^{\t})\mve_p[\cA_t].
\end{split}
\end{equation}
where, like in the previous sections, we use the shorthands $a_t = |\cA_t| = n - t + 1$
(Observation~\ref{obs:swor}) and $p = p_t$. Therefore,
\begin{equation}\label{eq:Q_t1}
\begin{split}
\Delta_t^{2} &\le
C \norm{B\mvu_{t}}^{-2}\left\langle B\mv u_t, \mat{\mv v}{0} \right \rangle^2 \stackrel{\eqref{eq:Butinprod}}{=} C\norm{B\mvu_{t}}^{2}\left(\mvv[\cA_t]^{\t}(I_{a_t} -
Y_tD_tY_t^{\t})\mve_p[\cA_t]\right)^2\\
&\stackrel{\eqref{eq:identity4}}{=}
C \norm{B\mvu_{t}}^{2}\cQ_t
\end{split}
\end{equation}
where in the first inequality we used the fact that $|\delta_t| \leq 2$ almost surely and in the final step, \begin{equation*}
\cQ_t \coloneqq \left(\mvv[\cA_t]^{\t}(I_{a_t} - Y_tD_tY_t^{\t})\mve_p[\cA_t]\right)^2.
\end{equation*}
Consequently,
\begin{equation}\label{eq:Deltat4}
\begin{split}
\sum_{t \in [n]}\bE \left[\Delta_t^{4}\right] \leq C \sum_{t \in [n]} \bE\left[\norm{B\mvu_{t}}^{4}\cQ_t^2 \right]
\stackrel{\eqref{eq:identity2}}{\le} C \cdot \Cr{C:xi}^2  \sum_{t \in [n]} \bE \left[\cQ_t^2\right].
\end{split}
\end{equation}
Let us now expand $\cQ_t^2$.
\begin{equation}\label{eq:Qt2}
\begin{split}
&\cQ_t^2 \\
&= 	\big(\mvv[\cA_t]^{\t}(I_{a_t} - Y_tD_tY_t^{\t})\mve_p[\cA_t]\big)^4 \le C \left( \left(\mvv[\cA_t]^{\t} e_p[\cA_t]\right)^4 + \left(\mvv[\cA_t]^{\t}Y_tD_tY_t^{\t}e_p[\cA_t]\right)^4 \right)\\
&= C \left( v_p^4 + \left(\mvv[\cA_t]^{\t}Y_tD_t\mvy_p\right)^4 \right) = C \left( v_p^4 + \left(\mvy_p^{\t}D_tY_t^{\t}\mvv[\cA_t]\mvv[\cA_t]^{\t}Y_tD_t\mvy_p\right)^2 \right).
\end{split}
\end{equation}
We will estimate the expected values of each of the two terms separately. The first term can be dealt
with in a simple manner.
\begin{equation}\label{eq:Expcvp4}
\begin{split}
\bE \left[v_p^4\right] &=  \bE \left[\bE \left[v_p^4 \, | \, \cF_{t-1}\right]\right] = \bE\left[ \frac{1}{n - t + 1} \sum_{i \in \cA_t} v_i^4\right]  \le \|\mvv\|_\infty^2\:\:\bE \left[\frac{1}{n - t + 1} \sum_{i \in \cA_t} v_i^2\right]\\
&\stackrel{{\rm Obs.}~\ref{obs:swor}}{=} \|\mvv\|_\infty^2  \, \frac{\|\mvv\|^2}{n}.
\end{split}
\end{equation}
The second term, on the other hand, requires a more delicate treatment. In particular, we will take
advantage of the {\em cancellations} inherent in the relation $\mvv^{\t}Y = 0$ (see~\eqref{eq:vY}) in
order to obtain {\em favorable} bounds on the moments of $\mvv[\cA_t]^{\t}Y_t$. This is
needed because we stipulated very weak upper bound on the density of the vector $\mvv$ 
in~\eqref{eq:formal_cond} 
as already discussed in the introduction. To see this, consider the ``good'' scenario when $d = 1$ and 
$y_i$'s are same (observe that $y_i$'s are scalars in this case). A simple algebra then reveals that the
best possible bound one can obtain in this generality is the following:
\begin{equation*}
\bE\left[ \left(\mvy_p^{\t}D_tY_t^{\t}\mvv[\cA_t]\mvv[\cA_t]^{\t}Y_tD_t\mvy_p\right)^2\right] \le C(\zeta) \, \|\mvv\|_{\infty}^4\, \frac{|\cA_t|^4}{|\cA_t|^4} = C(\zeta) \,\|\mvv\|_{\infty}^4.
\end{equation*}
Summing it over $t \in [n]$, we get
\begin{equation*}
\lim_{n \to \infty}n\, \frac{\|\mvv\|_{\infty}^4}{\|\mvv\|^4} = 0
\end{equation*}
which is a {\em much} stronger requirement than the one we stipulated in 
\eqref{eq:formal_cond}.

In order to exploit the relation $\mvv^\t Y = 0$, we will use Proposition~\ref{thm:srswor_moment} to derive the following lemma.
\begin{lemma}
\label{lem:srsworappli}
For any $t \in [n]$, we have
\begin{equation}\label{eq:vtYmomentbnd}
\begin{split}
	&\bE\left[ \|\mvv[\cA_t]^{\t}Y_t\|^4\right] \leq C d \|\mvv\|_{\infty}^{4} \left(\frac{(n - t + 1)^2 (t - 1)^2}{n^4}  \|Y\|_{\frob}^{4}  +  \frac{(n - t + 1)(t - 1)}{n^2}  \zeta^2\|Y\|_{\frob}^{2}\right)\\
	&\bE \left[\|\mvv[\cA_t]^{\t}Y_t\|^2\right] \leq \frac{(n - t + 1)(t - 1)}{n(n - 1)}
	\sum_{j \in [d]}\sum_{i \in [n]} v_i^2 y_{ij}^2 \le \frac{(n - t + 1)(t - 1)}{n(n - 1)}  \|\mvv\|_\infty^2 \cdot
	\|Y\|_{\frob}^2.
\end{split}
\end{equation}
\end{lemma}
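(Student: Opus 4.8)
The plan is to reduce both estimates to the one–dimensional without–replacement moment formulae of Proposition~\ref{thm:srswor_moment}, applied coordinate by coordinate, exploiting the orthogonality $\mvv^\t Y = 0$ from~\eqref{eq:vY}, which is exactly what makes the relevant sample sums \emph{centered}. Write $\mvv[\cA_t]^\t Y_t = (W_1,\dots,W_d)$ with $W_j \coloneqq \sum_{i\in\cA_t} v_i y_{ij}$, and note that $\sum_{i\in[n]} v_i y_{ij} = (\mvv^\t Y)_j = 0$ for every $j\in[d]$. Since $\cA_t$ is a simple random sample without replacement of size $a \coloneqq n-t+1$ from $[n]$ (Observation~\ref{obs:swor}), each $W_j$ is precisely the variable $W$ of Proposition~\ref{thm:srswor_moment} built from the numbers $\{v_i y_{ij}\}_{i\in[n]}$, whose sum vanishes.

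For the second moment I would simply sum $\E[W_j^2] = \frac{a(n-a)}{(n)_2}\sum_i v_i^2 y_{ij}^2 = \frac{(n-t+1)(t-1)}{n(n-1)}\sum_i v_i^2 y_{ij}^2$ over $j\in[d]$, giving the first displayed bound in~\eqref{eq:vtYmomentbnd}; then bounding $v_i^2\le\|\mvv\|_\infty^2$ and using $\sum_j\sum_i y_{ij}^2 = \|Y\|_{\rm Frob}^2$ yields the second.

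For the fourth moment the first step is the elementary Cauchy--Schwarz bound $\|\mvv[\cA_t]^\t Y_t\|^4 = \big(\sum_j W_j^2\big)^2 \le d\sum_j W_j^4$, so it suffices to control $\E[W_j^4]$ via the quartic identity in Proposition~\ref{thm:srswor_moment} and sum over $j$. Writing $a=n-t+1$, $n-a=t-1$, one bounds the combinatorial prefactors by $\frac{(a)_2(n-a)_2}{(n)_4}\le C\frac{(n-t+1)^2(t-1)^2}{n^4}$ and $\frac{a(n-a)}{(n)_2}\le C\frac{(n-t+1)(t-1)}{n^2}$, while $\big|1-\frac{6(a-1)(n-a-1)}{(n-2)(n-3)}\big|$ is $O(1)$ by AM--GM applied to $(a-1)(n-a-1)\le(n-2)^2/4$. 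For the outcome sums one uses $\sum_i (v_i y_{ij})^2 \le \|\mvv\|_\infty^2\sum_i y_{ij}^2$ and, since $y_{ij}^2\le\|\mvy_i\|^2\le\zeta^2$ by~\eqref{def:zeta}, $\sum_i (v_i y_{ij})^4 \le \|\mvv\|_\infty^4\zeta^2\sum_i y_{ij}^2$. Summing over $j$ and invoking both $\sum_j\sum_i y_{ij}^2 = \|Y\|_{\rm Frob}^2$ and $\sum_j\big(\sum_i y_{ij}^2\big)^2 \le \big(\sum_j\sum_i y_{ij}^2\big)^2 = \|Y\|_{\rm Frob}^4$ then gives~\eqref{eq:vtYmomentbnd}.

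I do not expect a serious obstacle here: the real content is recognizing that $\mvv^\t Y=0$ turns each $W_j$ into a mean-zero sample sum, so the without-replacement moment identities apply with their vanishing first-order term, and the rest is bookkeeping with the downward factorials $(n)_2,(n)_4$ and the crude coordinatewise Cauchy--Schwarz reduction from the $\ell^2$-norm. The only point needing mild care is verifying that the $O(1)$ correction factor in the quartic formula is genuinely bounded and that $(n)_2,(n)_4$ are comparable to $n^2,n^4$ — both hold once $n$ exceeds a fixed absolute constant, and the finitely many remaining values of $n$ are handled trivially.
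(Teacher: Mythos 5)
Your proposal is correct and follows essentially the same route as the paper: decompose into coordinate sums $W_j=\sum_{i\in\cA_t}v_i y_{ij}$, observe that $\mvv^{\t}Y=0$ makes each $W_j$ a mean-zero sample sum so that Proposition~\ref{thm:srswor_moment} applies, apply coordinatewise Cauchy--Schwarz to get the factor $d$ in the fourth-moment bound, and then estimate the resulting sums via $v_i^2\le\|\mvv\|_\infty^2$, $y_{ij}^2\le\zeta^2$, and the identification of $\sum_{j}\sum_i y_{ij}^2$ with $\|Y\|_{\rm Frob}^2$. No gaps.
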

\begin{proof}
Firstly we can bound,
\begin{equation*}
\begin{split}
	\|\mvv[\cA_t]^{\t}Y_t\|^4 =  \left(\sum_{j \in [d]} \left(\sum_{i \in \cA_t} v_i
	y_{ij}\right)^2\right)^2 \le d \sum_{j \in [d]} \left(\sum_{i \in \cA_t} v_i y_{ij}\right)^4
\end{split}
\end{equation*}
where we used the Cauchy-Schwarz inequality. Now taking expectation on both sides and using
$\mvv^{\t} Y = 0$, we obtain from the second formula given in~\eqref{eq:srswor_moment}:
\begin{align*}
&\bE \left[\|\mvv[\cA_t]^{\t}Y_t\|^4\right]\\
&\leq C d \sum_{j \in [d]} \left(\frac{(n - t + 1)^2 (t - 1)^2}{n^4} \left(\sum_{i \in [n]} v_i^2
y_{ij}^2\right)^2  +  \frac{(n - t + 1)(t - 1)}{n^2}\sum_{i \in [n]} v_i^4 y_{ij}^4\right).
\end{align*}
Next, we note that
\begin{align*}
\sum_{j \in [d]} \left(\sum_{i \in [n]} v_i^2
y_{ij}^2\right)^2 
&\leq \|\mvv\|_{\infty}^{4} \sum_{j \in [d]} \left(\sum_{i \in [n]}
y_{ij}^2\right)^2\\
&\leq \|\mvv\|_{\infty}^{4} \left(\sum_{j \in [d]} \sum_{i \in [n]} y_{ij}^2 \right)^2 =
\|\mvv\|_{\infty}^{4} \|Y\|_{\frob}^{4}.
\end{align*}
Also,
\begin{equation}
\sum_{j \in [d]} \sum_{i \in [n]} v_i^4 y_{ij}^4 \leq \|\mvv\|_{\infty}^{4} \sum_{j \in [d]} \sum_{i \in [n]} y_{ij}^4 \leq \zeta^2 \|\mvv\|_{\infty}^{4} \sum_{j \in [d]} \sum_{i \in [n]} y_{ij}^2 \stackrel{\eqref{def:zeta}}{\le} \zeta^2 \|\mvv\|_{\infty}^{4} \|Y\|_{\frob}^{2}.
\end{equation}
The last three displays finish the proof of the fourth moment bound. To prove the second moment
bound, we can argue similarly. All we have to do differently is to use the second moment formula in 
Proposition~\ref{thm:srswor_moment}.
\end{proof}
Now returning to the second term in~\eqref{eq:Qt2}, we start with some matrix algebra.
\begin{equation*}
\begin{split}
&\left(\mvy_p^{\t}D_tY_t^{\t}\mvv[\cA_t]\mvv[\cA_t]^{\t}Y_tD_t\mvy_p\right)^2 \le
\|\mvy_p\|^4 \left\|D_tY_t^{\t}\mvv[\cA_t]\mvv[\cA_t]^{\t}Y_tD_t\right\|_{{\rm op}}^2 \\
=& \, \|\mvy_p\|^4 \left({\rm Tr} (D_tY_t^{\t}\mvv[\cA_t]\mvv[\cA_t]^{\t}Y_tD_t)\right)^2
= \|\mvy_p\|^4 \left(\mvv[\cA_t]^{\t}Y_tD_t^2Y_t^{\t}\mvv[\cA_t]\right)^2.
\end{split}
\end{equation*}
Here in the second step we used the observation that $\Tr(A) = \|A\|_{{\rm op}}$ for any rank 1
matrix $A$. Taking conditional expectation {\em over} $p$, \ie  w.r.t. $\cF_{t - 1}$, we then get
\begin{equation}\label{eq:yp4qtcondexpect}
\begin{split}
\bE \left[ \|\mvy_p\|^4 \left(\mvv[\cA_t]^{\t}Y_tD_t^2Y_t^{\t}\mvv[\cA_t]\right)^2	\,| \,
\cF_{t - 1}\right]  = \frac 1{n - t + 1} \sum_{i \in \cA_t} \|y_i\|^4 \cdot \left(\mvv[\cA_t]^{\t}Y_tD_t^2Y_t^{\t}\mvv[\cA_t]\right)^2.
\end{split}
\end{equation}
We will bound this term in two different ways based on how large $t$ is. Firstly we can bound,
\begin{equation}\label{eq:easy_case4th}
\begin{split}
\frac 1{n - t + 1}&\sum_{i \in \cA_t} \|y_i\|^4 \cdot \left(\mvv[\cA_t]^{\t}Y_tD_t^2Y_t^{\t}\mvv[\cA_t]\right)^2\le \zeta^4 \|D_t\|_{{\rm op}}^4\|\mvv[\cA_t]^{\t}Y_t\|^4\\
&\le \zeta^4 \|\mvv[\cA_t]^{\t}Y_t\|^4
\end{split}
\end{equation}
where in the last step we used the fact that $\|D_t\|_{{\rm op}} \leq 1$ from~\eqref{def:YD}.

Taking expectations on both sides and using the fourth moment bound in Lemma~\ref{lem:srsworappli} and subsequently plugging it into~\eqref{eq:yp4qtcondexpect}, we
deduce
\begin{equation}\label{eq:srsmomentbnd}
\begin{split}
&\bE \left[ \|\mvy_p\|^4 \left(\mvv[\cA_t]^{\t}Y_tD_t^2Y_t^{\t}\mvv[\cA_t]\right)^2\right] \\
&\le C d \, \zeta^4  \,\|\mvv\|_{\infty}^{4} \left(\frac{(n - t + 1)^2 (t - 1)^2}{n^4}  \|Y\|_{\frob}^{4}  +  \frac{(n - t + 1)(t - 1)}{n^2}  \zeta^2 \|Y\|_{\frob}^{2}\right).
\end{split}
\end{equation}
From this we get the following bound for any $m \in [n]$:
\begin{equation}
\begin{split}
&\sum_{t = n - m + 1}^n\bE \left[ \|\mvy_p\|^4 \left(\mvv[\cA_t]^{\t}Y_tD_t^2Y_t^{\t}\mvv[\cA_t]\right)^2\right]\\
&\le C d \, \zeta^4\, \|\mvv\|_{\infty}^4\, \left( \frac{m^3}{n^2} \|Y\|_{\frob}^4 + \frac{m^2}{n} \zeta^2 \|Y\|_{\frob}^2\right) \\ &\le C d \, \zeta^8 \, \|\mvv\|_{\infty}^4\ m^3.
\end{split}
\end{equation}
where in the final step we used the fact that $\|Y\|_{\frob}^2 \leq n \zeta^2$~\eqref{def:zeta}. 


Next we deal with the sum over $t \in [n - m]$ which requires additional work. To this end, let us
recall the event $\cG_{2, t}$ from~\eqref{def:G1sG2s}:
\begin{equation*}
\mathcal{G}_{2,t} = \left\{\|Y_t^{\t} Y_t - \frac{n - t + 1}{n} Y^{\t} Y\|_{{\rm op}} \geq  \frac{n - t + 1}{2n} \lambda_{\min}\big(Y^{\t} Y\big)\right\}.
\end{equation*} 
We can bound the expectation {\em on} the event $\cG_{2, t}$ as follows:
\begin{equation*}
\begin{split}
&\sum_{t = 1}^{n - m}\bE \left[ \|\mvy_p\|^4 \left(\mvv[\cA_t]^{\t}Y_tD_t^2Y_t^{\t}\mvv[\cA_t]\right)^2 \1_{\cG_{2, t}}\right] \\
=&  \sum_{t = 1}^{n - m}\bE\left[\bE \left[ \|\mvy_p\|^4 \left(\mvv[\cA_t]^{\t}Y_tD_t^2Y_t^{\t}\mvv[\cA_t]\right)^2 \, | \, \cF_{t-1}\right]\1_{\cG_{2, t}}\right]\\
\stackrel{\eqref{eq:yp4qtcondexpect}}{=}& \sum_{t = 1}^{n - m}\frac 1{n - t + 1} \, \bE \left[\sum_{i \in \cA_t} \|y_i\|^4 \cdot \left(\mvv[\cA_t]^{\t}Y_tD_t^2Y_t^{\t}\mvv[\cA_t]\right)^2\1_{\cG_{2, t}}\right]\\
\stackrel{\eqref{def:zeta}+\eqref{eq:Ctopbnd}}{\le}& \sum_{t = 1}^{n - m}\frac {\zeta^2}{n - t + 1} \cdot \frac{n - t + 1}{n}\cdot\|Y\|_{\frob}^2 \,\bE \left[\left(\mvv[\cA_t]^{\t}Y_tD_t^2Y_t^{\t}\mvv[\cA_t]\right)^2\1_{\cG_{2, t}}\right]\\
\le& \sum_{t = 1}^{n - m}\frac {\zeta^2}{n - t + 1} \cdot \frac{n - t + 1}{n}\cdot\|Y\|_{\frob}^2 \,\bE \left[\|D_t\|_{{\rm op}}^4\|\mvv[\cA_t]^{\t}Y_t\|^4\1_{\cG_{2, t}}\right]\\
\stackrel{\eqref{eq:Ctopbnd}}{\le}& \sum_{t = 1}^{n - m}\frac {C\zeta^2}{n} \cdot\frac{\|Y\|_{\frob}^2\kappa^4}{(n - t + 1)^4} \cdot \bE \left[\|\mvv[\cA_t]^{\t}Y_t\|^4\right].
\end{split}
\end{equation*}
%
Now using the fourth moment bound in~\eqref{eq:vtYmomentbnd} to bound the expectation term in the
last step above, we further obtain
\begin{equation}\label{eq:bnd_maintermlyapunov}
\begin{split}
&\,\sum_{t = 1}^{n - m}\bE \left[ \|\mvy_p\|^4 \left(\mvv[\cA_t]^{\t}Y_tD_t^2Y_t^{\t}\mvv[\cA_t]\right)^2 \1_{\cG_{2, t}}\right] \\
\le &\,\sum_{t = 1}^{n - m}\frac {Cd \, \zeta^2 \|\mvv\|_{\infty}^{4}}{n} \cdot\frac{\|Y\|_{\frob}^4\kappa^4}{(n - t + 1)^4} \left(\frac{(n - t + 1)^2 (t - 1)^2}{n^4}  \|Y\|_{\frob}^{2}  +  \frac{(n - t + 1)(t - 1)}{n^2} \zeta^2\right)\\
\le &\,Cd \,\zeta^2 \|\mvv\|_{\infty}^4 \cdot \|Y\|_{\frob}^4\kappa^4  \cdot \left( \frac{\|Y\|_{\frob}^{2}}{n^5} \sum_{t = 1}^{n - m} \frac{(t - 1)^2}{(n - t + 1)^2} + \frac{\zeta^2}{n^3}\sum_{t = 1}^{n - m}  \frac{(t - 1)}{(n - t + 1)^3}\right)\\
\le &\,Cd \, \zeta^2 \|\mvv\|_{\infty}^4 \cdot \|Y\|_{\frob}^4\kappa^4\cdot \left( \frac{1}{mn^3} \|Y\|_{\frob}^2 + \frac{\zeta^2}{m^2n^2} \right)
\le \,Cd \,\zeta^8 \|\mvv\|_{\infty}^4 \frac{\kappa^{4}}{m}.
\end{split}
\end{equation}
where in the last step we again used the fact that $\|Y\|_{\frob}^2 \leq n \zeta^2$~\eqref{def:zeta}.
Bounding the same expectation {\em on} the {\em unlikely} event $\cG_{2, t}^c$, is relatively
simpler in view of Lemma~\ref{lem:smallevent2}.
\begin{equation}\label{eq:bnd_remtermlyapunov}
\begin{split}
&\sum_{t = 1}^{n - m}\bE \left[ \|\mvy_p\|^4 \left(\mvv[\cA_t]^{\t}Y_tD_t^2Y_t^{\t}\mvv[\cA_t]\right)^2 \1_{\cG_{2, t}^c}\right]\\
&
\stackrel{\eqref{eq:yp4qtcondexpect} +~\eqref{def:zeta}}{\le} \zeta^4 \sum_{t = 1}^{n - m} \bE \left[\|D_t\|_{{\rm op}}^4\|\mvv[\cA_t]^{\t}Y_t\|^4\1_{\cG_{2, t}^c}\right] \\
\le& \, \zeta^4  \sum_{t = 1}^{n - m} \bE \left[\|\mvv[\cA_t]\|^4 \|Y_t\|_{\frob}^4\1_{\mathcal
G_{2, t}^c}\right] \stackrel{\eqref{def:zeta}}{\le} \zeta^8 \|\mvv\|^4 \sum_{t = 1}^{n - m} (n - t + 1)^2\P[\cG_{2, t}^c]  \\ \stackrel{\eqref{eq:smallevent2}}{\le}& \,  C d \zeta^8 \|\mvv\|^4n^3\ \exp\left(- c (\kappa\zeta^{2})^{-2}m\right)
\end{split}
\end{equation}
where in the second step we used the Cauchy-Schwarz inequality to deduce $$\|\mvv[\cA_t]^{\t}Y_t\|^4 \leq \|\mvv[\cA_t]\|^4 \|Y_t\|^4$$ along with the fact that $\|D_t\|_{{\rm op}}
\leq 1$.

Finally we obtain the lemma by collecting the bounds in~\eqref{eq:Expcvp4} and
\eqref{eq:bnd_maintermlyapunov}--\eqref{eq:bnd_remtermlyapunov} into the right hand side of
\eqref{eq:Qt2} after taking expectations on both sides and adding over $t \in [n]$ and subsequently
plugging the resulting bound into the right hand side of~\eqref{eq:Deltat4}.
%
%
\end{proof}


\smallskip

\noindent {\em Verifying condition~(iii).}  Letting $\cF_{t - 1/2}$ denote the $\sigma$-algebra
generated by $\cF_{t-1}, p_t$ and $p_t^{{\gsw}}$, observe that $\cF_{t-1} \subset \cF_{t -1/2}
\subset \cF_t$ (cf.~\eqref{def:F_t}) and that $\mvu_t$ is measurable relative to $\cF_{t -1/2}$. Also
notice that $\eta_t$ is distributed as a Rademacher variable conditionally on $\cF_{t - \frac 12}$
(see~\eqref{def:etat}). Therefore we can rewrite~\eqref{def:Qn} as follows:
\begin{equation*}
\begin{split}
Q_n &= \sum_{t \in [n]} \bE \left[\eta_t^2\norm{B\mvu_{t}}^{-2}\left \langle B\mv u_t, \mat{\mv v}{0} \right \rangle^2 \, \Big | \, \cF_{t - 1} \right]  \\
&= \sum_{t \in [n]} \bE \left[\norm{B\mvu_{t}}^{-2}\left \langle B\mv u_t, \mat{\mv v}{0} \right \rangle^2 \bE\left[ \eta_t^2 \, \Big | \, \cF_{t - \frac12} \right] \, \Big | \, \cF_{t - 1} \right] = \sum_{t \in [n]} \bE \left[\norm{B\mvu_{t}}^{-2}\left \langle B\mv u_t, \mat{\mv v}{0} \right \rangle^2  \, \Big | \, \cF_{t - 1} \right].
\end{split}
\end{equation*}
The main result in this part is the following lemma.
\begin{lemma}\label{lem:idealclt}
Let us consider the random variables
\begin{equation*}
T_n \coloneqq  \sum_{t \in [n]} \frac{1}{|\cA_t|} \|\mvv[\cA_t]\|^2 = \sum_{t \in [n]} \frac{1}{n - t + 1} \|\mvv[\cA_t]\|^2
\end{equation*}
and
\begin{equation*}
\mathcal{QF} \coloneqq \sum_{t \in [n]}\frac{1}{n - t + 1}\sum_{j \in \cA_t}v_j^2\mvy_j^{\t}
D_t \mvy_j  + \sum_{t \in [n]}\frac1{n - t + 1}\,\mvv[\cA_t]^{\t}Y_tD_tY_t^{\t}\mvv[\cA_t].
\end{equation*}
Then we have,
\begin{equation}\label{eq:idealclt1}
|Q_n - T_n| \leq C \cdot \Cr{C:xi}^2\mathcal{QF}
\end{equation}
where for any $m \in {[n]}$,
\begin{equation}\label{eq:idealclt2}
\bE \left[\mathcal{QF}\right] \leq C \zeta^2\|\mvv\|_\infty^2 \left(m + \kappa \log en\right) + d
\|\mvv\|_\infty^2\log en + C d n \zeta^2 \|\mvv\|^2 \exp\left(-c\:(\kappa\zeta^2)^{-2}m\right).
\end{equation}
Furthermore, we have the following expressions or bounds for the mean and variance of $T_n$:
\begin{equation}\label{eq:idealclt3}
\bE [T_n] = \|\mvv\|^2
\end{equation}
and
\begin{equation}\label{eq:idealclt4}
\frac{\var\left[T_n\right]}{\|\mvv\|^4} \leq \frac{\|\mvv\|_{\infty}^2}{\|\mvv\|^2} (\log en)^2 + \frac{1}{n - 1}.
\end{equation} \end{lemma}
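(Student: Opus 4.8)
The plan is to prove the four displays of Lemma~\ref{lem:idealclt} in the order \eqref{eq:idealclt3}, \eqref{eq:idealclt4}, \eqref{eq:idealclt1}, \eqref{eq:idealclt2}. For \eqref{eq:idealclt3} and \eqref{eq:idealclt4} I would use Observation~\ref{obs:swor}: the pivots $p_1,\dots,p_n$ of the skeletal process form a uniformly random permutation of $[n]$ with $\cA_t=\{p_t,\dots,p_n\}$, so $i\in\cA_t$ exactly when $t\le R_i$, where $R_i$ is the (exchangeable, uniform on $[n]$) time at which $i$ is removed. Writing $H_k\coloneqq\sum_{j\le k}1/j$ and $f(r)\coloneqq\sum_{t=1}^r\frac1{n-t+1}=H_n-H_{n-r}$, one has $T_n=\sum_{i\in[n]}v_i^2 f(R_i)$. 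Since $\P[i\in\cA_t]=(n-t+1)/n$ we get $\E\|\mvv[\cA_t]\|^2=\frac{n-t+1}{n}\|\mvv\|^2$ and, summing against $1/(n-t+1)$, $\E[T_n]=\|\mvv\|^2$, which is \eqref{eq:idealclt3}. For \eqref{eq:idealclt4} the key point is that $\sum_i f(R_i)=\sum_{r=1}^n f(r)$ is \emph{deterministic}; by exchangeability this forces $\mathrm{Cov}(f(R_i),f(R_j))=-\var[f(R_1)]/(n-1)\le 0$ for $i\ne j$, whence $\var[T_n]=\var[f(R_1)]((1+\tfrac1{n-1})\sum_i v_i^4-\tfrac1{n-1}\|\mvv\|^4)\le\var[f(R_1)]\|\mvv\|_\infty^2\|\mvv\|^2$; since $0\le f\le H_n\le\log en$ we have $\var[f(R_1)]\le(\log en)^2$, and dividing by $\|\mvv\|^4$ yields \eqref{eq:idealclt4}.

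For \eqref{eq:idealclt1}, recall $Q_n=\sum_t\E[\norm{B\mvu_t}^{-2}\langle B\mvu_t,\mat{\mvv}{0}\rangle^2\mid\cF_{t-1}]$, that $p_t$ is uniform on $\cA_t$ conditionally on $\cF_{t-1}$, and that $\mvu_t=\mvu(p_t,\cA_t)$. By \eqref{eq:identity4} the summand is $\mathcal Q_t+(\norm{B\mvu_t}^2-1)\mathcal Q_t$ with $\mathcal Q_t=(\mvv[\cA_t]^\t(I_{a_t}-Y_tD_tY_t^\t)\mve_{p_t}[\cA_t])^2$, where $a_t=n-t+1=|\cA_t|$. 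Averaging $\mathcal Q_t$ over $p_t$ gives $\E[\mathcal Q_t\mid\cF_{t-1}]=\tfrac1{a_t}\|(I_{a_t}-Y_tD_tY_t^\t)\mvv[\cA_t]\|^2$, which on expanding and using $Y_t^\t Y_t=D_t^{-1}-I_d$ equals $\tfrac1{a_t}(\|\mvv[\cA_t]\|^2-\mvv[\cA_t]^\t Y_tD_tY_t^\t\mvv[\cA_t]-\mvv[\cA_t]^\t Y_tD_t^2Y_t^\t\mvv[\cA_t])$; since $D_t^2\preceq D_t$, summing over $t$ shows $|\sum_t\E[\mathcal Q_t\mid\cF_{t-1}]-T_n|\le 2\mathcal{QF}$. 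For the remaining term I would record the conditional bound \eqref{eq:bnd_cond_expect1}: from $\norm{B\mvu_t}^2-1\le(1+\zeta^2)\mvy_{p_t}^\t D_t\mvy_{p_t}\le\Cr{C:xi}\mvy_{p_t}^\t D_t\mvy_{p_t}$ (see \eqref{eq:Butnormbnd}), the inequality $(a-b)^2\le 2a^2+2b^2$ applied to $\mathcal Q_t$, and averaging over $p_t$, the diagonal term gives $\frac{2\Cr{C:xi}}{a_t}\sum_{j\in\cA_t}v_j^2\mvy_j^\t D_t\mvy_j$, while the cross term is controlled via $\sum_{q\in\cA_t}\mvy_q^\t D_t\mvy_q\,(g^\t\mvy_q)^2\le\zeta^2 g^\t Y_t^\t Y_t g\le\zeta^2\mvv[\cA_t]^\t Y_tD_tY_t^\t\mvv[\cA_t]$ with $g\coloneqq D_tY_t^\t\mvv[\cA_t]$ (again using $Y_t^\t Y_t=D_t^{-1}-I_d$ and $D_t^2\preceq D_t$), giving the second summand $\frac{2\Cr{C:xi}^2}{a_t}\mvv[\cA_t]^\t Y_tD_tY_t^\t\mvv[\cA_t]$. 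Summing \eqref{eq:bnd_cond_expect1} over $t$ bounds $\sum_t\E[(\norm{B\mvu_t}^2-1)\mathcal Q_t\mid\cF_{t-1}]$ by $C\Cr{C:xi}^2\mathcal{QF}$, so $T_n-2\mathcal{QF}\le Q_n\le T_n+C\Cr{C:xi}^2\mathcal{QF}$, which is \eqref{eq:idealclt1}.

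The main obstacle is \eqref{eq:idealclt2}, the bound on $\E[\mathcal{QF}]$; here I would split each of the two sums in $\mathcal{QF}$ at $t=n-m$. On the tail $t>n-m$ the crude estimates $\mvy_j^\t D_t\mvy_j\le\zeta^2$, $Y_tD_tY_t^\t\preceq I_{a_t}$ and $\|\mvv[\cA_t]\|^2\le\|\mvv\|_\infty^2(n-t+1)$ give a contribution of order $(1+\zeta^2)\|\mvv\|_\infty^2 m$. For $t\le n-m$ I decompose over $\cG_{2,t}$ and $\cG_{2,t}^c$ from \eqref{def:G1sG2s}: the first sum is handled using only $\|D_t\|_{\rm op}\le\frac{2\kappa}{n-t+1}$ from \eqref{eq:Ctopbnd} (on $\cG_{2,t}$), giving $O(\kappa\zeta^2\|\mvv\|_\infty^2\log en)$, while for the second sum I combine $\|D_t\|_{\rm op}\le\frac{2\kappa}{n-t+1}$ with the second-moment bound $\E\|\mvv[\cA_t]^\t Y_t\|^2\le\frac{(n-t+1)(t-1)}{n(n-1)}\|\mvv\|_\infty^2\|Y\|_{\rm Frob}^2$ of Lemma~\ref{lem:srsworappli}---this is exactly where the orthogonality $\mvv^\t Y=0$ enters---so the $t$-th term has expectation $\le\frac{2\kappa(t-1)}{(n-t+1)n(n-1)}\|\mvv\|_\infty^2\|Y\|_{\rm Frob}^2$, and summing with $\|Y\|_{\rm Frob}^2\le n\zeta^2$ and $\sum_t\frac{t-1}{n-t+1}\le n\log en$ again gives $O(\kappa\zeta^2\|\mvv\|_\infty^2\log en)$. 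On $\cG_{2,t}^c$ I bound each summand by $\zeta^2\|\mvv\|^2$ (via $\|D_t\|_{\rm op}\le1$) and use $\P[\cG_{2,t}^c]\le Cd\exp(-c(\kappa\zeta^2)^{-2}m)$---valid since $t\le n-m$ forces $n-t+1\ge m$ in \eqref{eq:smallevent2}---producing $Cdn\zeta^2\|\mvv\|^2\exp(-c(\kappa\zeta^2)^{-2}m)$. Adding the three pieces yields \eqref{eq:idealclt2}; the point of the argument is the simultaneous use of two sources of smallness---the event $\cG_{2,t}$, which supplies the extra factor $\kappa/(n-t+1)$ inside $\|D_t\|_{\rm op}$, and the cancellation from $\mvv^\t Y=0$, which supplies the factor $(n-t+1)$ and replaces $\|\mvv\|^2$ by $\|\mvv\|_\infty^2$ in $\E\|\mvv[\cA_t]^\t Y_t\|^2$---so that after summation over $t$ only a $\log en$ survives in the main terms, with the single power of $n$ confined to the exponentially small remainder. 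With the eventual choice of $m$ of order $\kappa^2\zeta^4\log en$ this makes $\E[\mathcal{QF}]=o(\|\mvv\|^2)$ under \eqref{eq:formal_cond}.
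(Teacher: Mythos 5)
Your argument for \eqref{eq:idealclt1} is essentially the paper's: the decomposition via \eqref{eq:identity4}, the exact evaluation of $\mathbb E[\mathcal Q_t\mid\cF_{t-1}]$ using $Y_t^{\t}Y_t=D_t^{-1}-I_d$ and $D_t^2\le D_t$ in the Loewner order, and your derivation of \eqref{eq:bnd_cond_expect1} (diagonal plus cross term after averaging over the uniform pivot) coincide with the paper up to constants. For \eqref{eq:idealclt3}--\eqref{eq:idealclt4} you take a slightly different and arguably cleaner route: the paper writes $T_n=\sum_j v_j^2 H(\theta_j)$ and computes $\mathbb E[H(\theta_1)H(\theta_2)]$ explicitly, whereas you exploit that $\sum_i f(R_i)$ is deterministic to get the negative covariance $-\var[f(R_1)]/(n-1)$; both yield \eqref{eq:idealclt4} (yours even without the $1/(n-1)$ term).

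The one place where you deviate in substance is \eqref{eq:idealclt2}. The paper bounds the first sum of $\mathcal{QF}$ globally by $d\|\mvv\|_{\infty}^2\log en$ using ${\rm Tr}(Y_tD_tY_t^{\t})={\rm Tr}(I_d-D_t)\le d$ (no splitting), which is fine, but more importantly it treats the tail $t>n-m$ of the \emph{second} sum by keeping $\|D_t\|_{{\rm op}}\le1$ and then taking expectations with the second-moment bound of Lemma~\ref{lem:srsworappli}; that bound carries the factor $\|Y\|_{{\rm Frob}}^2\le n\zeta^2$ and hence yields $C\zeta^2\|\mvv\|_{\infty}^2\,m$. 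Your crude tail estimate $Y_tD_tY_t^{\t}\le I_{a_t}$ gives only $\|\mvv\|_{\infty}^2\,m$, without the $\zeta^2$; since Theorem~\ref{thm:formal} assumes only that $\phi$ is bounded away from $0$, $\zeta$ may tend to $0$ (as $\phi\to1$), and then your bound is strictly weaker than the stated \eqref{eq:idealclt2}. This is a cosmetic rather than a real gap: the extra term is harmless in every downstream use (Lemma~\ref{lem:proplem2} and the verification of condition~(iii) only need $m^{1/2}\|\mvv\|_{\infty}/\|\mvv\|\to0$, with $\zeta$ bounded above), and it is repaired by treating the tail exactly as you already treat the range $t\le n-m$ on $\cG_{2,t}$, i.e.\ by invoking the $\mvv^{\t}Y=0$ second-moment bound of Lemma~\ref{lem:srsworappli} there as well. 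The remainder of your treatment of \eqref{eq:idealclt2} --- splitting at $n-m$, the $\cG_{2,t}$/$\cG_{2,t}^c$ dichotomy with \eqref{eq:Ctopbnd} and \eqref{eq:smallevent2}, and the observation that $t\le n-m$ forces $n-t+1\ge m$ in the probability bound --- matches the paper.
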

Now let us set
\begin{equation*}
m = C \kappa^2 \zeta^4 \log n
\end{equation*}
for an appropriately large enough constant $C$ so that, by~\eqref{eq:idealclt2},
\begin{equation*}
\lim_{n \to \infty} \frac{\bE\left[\mathcal{QF}\right]}{\|\mvv\|^2} = 0 \mbox{ as soon as } \lim_{n \to \infty} \max(m^{1/2}, d^{1/2} (\log n)^{1/2}, \kappa^{1/2}(\log n)^{1/2}) \frac{\|\mvv\|_\infty}{\|\mvv\|}
= 0
\end{equation*}
which follows from the first part of assumption~\eqref{eq:formal_cond} in
Theorem~\ref{thm:formal}. Therefore under the same assumption, $|Q_n - T_n|$ converges to $0$
in probability as $n \to \infty$. Similarly, by~\eqref{eq:idealclt3} and~\eqref{eq:idealclt4}, the assumption
implies $\frac{T_n}{\|\mvv\|^2}$ converges to $1$ in probability. Together they imply condition~(ii). We
also need to check that $m$, as defined above, is at most $n$ for all large enough $n$ which has
already been verified under assumption~\eqref{eq:formal_cond} in the previous part.

\begin{proof}[Proof of Lemma~\ref{lem:idealclt}]
\noindent{\em Proof of~\eqref{eq:idealclt1}}. To this end, let us recall from~\eqref{eq:identity4}:
\begin{equation}\label{eq:Q_t}
\begin{split}
\norm{B\mvu_{t}}^{-2}\left \langle B\mv u_t, \mat{\mv v}{0} \right \rangle^2 &= \norm{B\mvu_{t}}^{2} \left(\mvv[\cA_t]^{\t}(I_{a_t} - Y_tD_tY_t^{\t})\mve_p[\cA_t]\right)^2\\
&=\norm{B\mvu_{t}}^{2} \cQ_t = \cQ_t + (\norm{B\mvu_{t}}^{2} - 1)\cQ_t
\end{split}
\end{equation}
with  $\cQ_t = \left(\mvv[\cA_t]^{\t}(I_{a_t} - Y_tD_tY_t^{\t})\mve_p[\cA_t]\right)^2$. Let us deal with the second term first.  
\begin{equation*}
\begin{split}
&(\|B\mv u_t\|^{2} - 1)\cQ_t \stackrel{\eqref{eq:identity2}}{\le}  \Cr{C:xi} \mvy_p^{\t} D_t\mvy_p \mvv[\cA_t]^{\t}(I_{a_t} - Y_tD_tY_t^{\t})\mve_p[\cA_t]\mve_p[\cA_t]^{\t} (I_{a_t} - Y_tD_tY_t^{\t})\mvv[\cA_t]\\
&\stackrel{\eqref{eq:qformineq} +~\eqref{eq:Butnormbnd}}{\le} 2 \Cr{C:xi} \, \mvv[\cA_t]^{\t}\mve_p[\cA_t]\mvy_p^{\t}D_{t}\mvy_p\mve_p[\cA_t]^{\t}\mvv[\cA_t] +
2\Cr{C:xi}^2 \, \mvv[\cA_t]^{\t}Y_tD_tY_t^{\t}\mve_p[\cA_t]\mve_p[\cA_t]^{\t}Y_tD_tY_t^{\t}\mvv[\cA_t].
\end{split}
\end{equation*}
We now take conditional expectations on both sides with respect to $\cF_{t-1}$ for each of the two
terms separately (recall that $|\cA_t| = a_t =  n - t + 1$).
\begin{equation*}
\begin{split}
\bE&\left[\mvv[\cA_t]^{\t}\mve_p[\cA_t]\mvy_p^{\t}D_{t}\mvy_p\mve_p[\cA_t]^{\t}\mvv[\cA_t] \, | \, \mathcal
F_{t-1}\right] = \frac1{n - t + 1}\,\sum_{j \in \cA_t} v_j^2 \mvy_j^{\t} D_t \mvy_j	, \text{ whereas}\\
\bE&\left[\mvv[\cA_t]^{\t}Y_tD_tY_t^{\t}\mve_p[\cA_t]\mve_p[\cA_t]^{\t}Y_tD_tY_t^{\t}\mvv[\cA_t] \, | \, \mathcal
F_{t-1}\right] = \frac1{n - t + 1}\,\mvv[\cA_t]^{\t}Y_tD_tY_t^{\t}Y_tD_tY_t^{\t}  \mvv[\cA_t]\\
&\stackrel{\eqref{def:XX_tBC}}{=} \frac1{n - t + 1}\,\mvv[\cA_t]^{\t}Y_tD_t(D_t^{-1} - I_d)D_tY_t^{\t}\mvv[\cA_t] \le \frac1{n - t + 1}\,\mvv[\cA_t]^{\t}Y_tD_tY_t^{\t}\mvv[\cA_t].
\end{split}
\end{equation*}
where in the last step we used the fact that $D_t - D_t^2$ is at most $D_t$ in Loewner order. 
Putting together the last 
two displays, we obtain
\begin{equation}\label{eq:bnd_cond_expect1}
\bE \left[(\norm{B\mvu_{t}}^{2} - 1)\cQ_t \, | \, \cF_{t-1}\right] \le \frac{2\Cr{C:xi}}{n - t + 1}\, \sum_{j \in \cA_t} v_j^2\mvy_j^{\t} D_t \mvy_j + \frac {2\Cr{C:xi}^2}{n - t + 1}\,\mvv[\cA_t]^{\t}Y_tD_tY_t^{\t}\mvv[\cA_t].
\end{equation}

As to the first term on the right hand side of~\eqref{eq:Q_t}, we can write
\begin{equation*}
\begin{split}
\bE\left[ \cQ_t \, | \, \cF_{t-1}\right] &= \frac1{n - t + 1} \mvv[\cA_t]^{\t}(I_{a_t} - Y_tD_tY_t^{\t})^2\mvv[\cA_t]\\
&= \frac1{n - t + 1} \mvv[\cA_t]^{\t}(I_{a_t} - 2Y_tD_tY_t^{\t} + Y_tD_tY_t^{\t}Y_tD_tY_t^{\t})\mvv[\cA_t]\\
&\stackrel{\eqref{def:XX_tBC}}{=} \frac1{n - t + 1} \mvv[\cA_t]^{\t}(I_{a_t} - 2Y_tD_tY_t^{\t} + Y_tD_t(D_t^{-1} - I_d)D_tY_t^{\t})\mvv[\cA_t]\\
&= \frac1{n - t + 1} \|\mvv[\cA_t]\|^2 - \frac1{n - t + 1}\mvv[\cA_t]^{\t}Y_t (D_t + D_t^2)Y_t^{\t}\mvv[\cA_t].
\end{split}
\end{equation*}
We can bound the second term in the last step above as follows:
\begin{equation*}
\frac1{n - t + 1}\mvv[\cA_t]^{\t}Y_t (D_t + D_t^2)Y_t^{\t}\mvv[\cA_t] \stackrel{\eqref{def:XX_tBC}}{\le} \frac2{n - t + 1}\mvv[\cA_t]^{\t}Y_t
D_tY_t^{\t}\mvv[\cA_t]. 
\end{equation*}
where we used $D_t^2$ is at most $D_t$ in Loewner order.

Now taking the conditional expectations on both sides of~\eqref{eq:Q_t} and plugging the results of
previous two displays as well as~\eqref{eq:bnd_cond_expect1} into the resulting expression we obtain
\begin{equation*}
\begin{split}
\mathcal R_t &\coloneqq \left|\,\bE \left[ \Delta_t^2  \Big| \, \cF_{t-1}\right] - \frac1{a_t} \|\mvv[\cA_t]\|^2\,\right| \\
&\le \frac{C\cdot\Cr{C:xi}}{n - t + 1}\sum_{j \in \cA_t}v_j^2\mvy_j^{\t} D_t \mvy_j + \frac {C\cdot\Cr{C:xi}^2}{n - t + 1}\,\mvv[\cA_t]^{\t}Y_tD_tY_t^{\t}\mvv[\cA_t]
\end{split}
\end{equation*}
which yields~\eqref{eq:idealclt1}.

\smallskip

\noindent{\em Proof of~\eqref{eq:idealclt2}}. Let us start with the first term in $\mathcal{QF}$ which is the easiest of the two. 
To this end, let us evaluate its expectation as follows: 
\begin{equation}\label{eq:QF1_bnd}
\begin{split}
\bE \left[\sum_{t \in [n]}\frac{1}{n - t + 1}\sum_{j \in \cA_t}v_j^2\mvy_j^{\t}
D_t \mvy_j\right] &\le \|\mvv\|_{\infty}^2 \bE \left[\sum_{t \in [n]}\frac{1}{n - t + 1}\,{\rm Tr}(Y_t D_tY_t^{\t})\right]\\
&\stackrel{\eqref{def:YD}}{\le} d\|\mvv\|_{\infty}^2 \sum_{t \in [n]}\frac{1}{n - t + 1} \le d\|\mvv\|_{\infty}^2 \log en.
\end{split}
\end{equation}

Tackling the second term in $\mathcal{QF}$, which we henceforth refer to as $S_n$, requires a careful
analysis due to a similar reason as that for the second term in~\eqref{eq:Qt2} while verifying
condition~(ii). For some $m \in {[n]}$, let us write
\begin{equation}\label{eq:Sn}
\begin{split}
S_n &= S_{n - m} + \sum_{t = n - m + 1}^n \frac 1{n - t +1} \,\mvv[\cA_t]^{\t}Y_tD_tY_t^{\t}\mvv[\cA_t]\\
&\le S_{n - m} + \sum_{t = n - m + 1}^n \frac 1{n - t +1} \,\|\mvv[\cA_t]^{\t}Y_t\|^2,
\end{split}
\end{equation}
where in the 
final step we used the observation that $\|D_t\|_{{\rm op}} \le 1$. First we analyze the
expected values of 
the second summand. Since $\mvv^{\t}Y = 0$, we obtain in view of the second moment bound in Lemma~\ref{lem:srsworappli},
\begin{equation}
\label{eq:rem1j}
\begin{split}
&\bE \left[\sum_{t = n - m + 1}^n \frac 1{n - t +1} \,\|\mvv[\cA_t]^{\t}Y_t\|^2 \right]\\
&= 	\sum_{t = n - m + 1}^n\, \frac 1{n - t + 1} \frac{(n - t + 1)(t - 1)}{n(n - 1)}
\|\mvv\|_\infty^2 \cdot \|Y\|_{\frob}^2 \le \frac{\|\mvv\|_\infty^2 \cdot \|Y\|_{\frob}^2} {n(n - 1)}\,nm \\
&= \frac{\|\mvv\|_\infty^2 \cdot \|Y\|_{\frob}^2 \cdot m}{n - 1} \leq C \zeta^2\|\mvv\|_\infty^2 m
\end{split}
\end{equation}
where in the last step we used the fact that $\|Y\|_{\frob}^2 \leq n \zeta^2.$

Next we bound the expectation of $S_{n - m}$ towards which we will once again
use the event $\cG_{2, t}$ from~\eqref{def:G1sG2s} and its consequences as in~\eqref{eq:Ctopbnd}. We can write
\begin{equation}\label{eq:Sn-ln}
\begin{split}
S_{n - m} &\le \sum_{t = 1}^{n - m} \frac1{n - t + 1}\,\|D_t\|_{{\rm op}} \|\mvv[\cA_t]^{\t}Y_t\|^2 \\ 
&\stackrel{\eqref{eq:Ctopbnd}}{\le} \sum_{t = 1}^{n - m} \frac1{n - t + 1} \, \frac{2\kappa}{n - t + 1}\,\|\mvv[\cA_t]^{\t}Y_t\|^2 + \sum_{t = 1}^{n - m} \frac1{n - t + 1} \, \|\mvv[\cA_t]^{\t}Y_t\|^2 \1_{\cG_{2, t}^c}\\
&\eqqcolon S_{n - m}^\star + {\rm Rem}.
\end{split}
\end{equation}
Using the second moment bound in Lemma~\ref{lem:srsworappli}, we then obtain
\begin{equation}\label{eq:ESn-elln}
\begin{split}
\bE\left[S_{n - m}^\star\right] &\le \frac{C\|\mvv\|_\infty^2\kappa}{n(n - 1)} \cdot \|Y\|_{{\rm
Frob}}^2 \sum_{t = 1}^{n - m} \frac{t - 1}{n - t + 1} \le \frac{C \|\mvv\|_\infty^2 \kappa\cdot \log en}{n -
1} \|Y\|_{\frob}^2 \\ & \leq C \zeta^2\|\mvv\|_\infty^2 \kappa \log en.
\end{split}
\end{equation}
where again in the last step we used the fact that $\|Y\|_{\frob}^2 \leq n \zeta^2.$

As to $\bE \left[{\rm Rem}\right]$, first notice that for {\em any possible choice} of $\cA_t \subset [n]$,
\begin{equation*}
\begin{split}
\|\mvv[\cA_t]^{\t}Y_t\|^2 &= \sum_{j \in [d]}\|\mvv[\cA_t]^{\t}Y_t[ \,\, : j]\|^2 \le \sum_{j \in
[d]}\|\mvv[\cA_t]\|^2 \|Y_t[ \,\, : j]\|^2 \le \|\mvv\|^2 \sum_{j \in [d]}\|Y_t[ \,\, : j]\|^2 \\
&= \|\mvv\|^2 \|Y_t\|_{\frob}^2 \stackrel{\eqref{def:zeta}}{\le} (n - t + 1)\zeta^2 \|\mvv\|^2,
\end{split}
\end{equation*}
where in the second step we used the Cauchy-Schwarz inequality. Plugging this bound into the
expression of ${\rm Rem}$, we get
\begin{equation*}
\begin{split}
\bE \left[{\rm Rem}\right] \le \zeta^2 \|\mvv\|^2 \sum_{t = 1}^{n - m} \P[\cG_{2, t}^c]  \stackrel{\eqref{eq:smallevent2}}{\le} C d n \zeta^2 \|\mvv\|^2 \exp\left(-c\:(\kappa\zeta^2)^{-2}m\right).
\end{split}
\end{equation*}
Combining this with~\eqref{eq:ESn-elln} and plugging the resulting bound into~\eqref{eq:Sn-ln} we
obtain after taking expectations on both sides,
\begin{equation*}
\begin{split}
\bE \left[S_{n - m}\right] \le C \zeta^2\|\mvv\|_\infty^2 \kappa \log en + C d n \zeta^2 \|\mvv\|^2 \exp\left(-c\:(\kappa\zeta^2)^{-2}m\right).
\end{split}
\end{equation*}
Together with~\eqref{eq:rem1j} and subsequently~\eqref{eq:Sn}, this implies
\begin{align*}
\bE \left[S_{n}\right] \le C \zeta^2\|\mvv\|_\infty^2 (m + \kappa \log en) + C d n \zeta^2 \|\mvv\|^2 \exp\left(-c\:(\kappa\zeta^2)^{-2}m\right).
\end{align*}
Along with~\eqref{eq:QF1_bnd}, the above bound yields~\eqref{eq:idealclt2}.

\smallskip

\noindent{\em Proof of~\eqref{eq:idealclt3}}. Finally it remains to show the facts about $T_n$. Let us
denote the random variable in question by $T_n$. 
Clearly,
\begin{equation*}
T_{n}=\sum_{t \in [n]} v_{p_t}^{2}\cdot \sum_{s \in [t]}\frac1{n-s+1} = \sum_{t \in [n]}v_{p_t}^{2}\cdot H(t)
\end{equation*}
where
\begin{equation*}
H(t) = \sum_{s \in [t]}\frac1{n-s+1} = \sum_{s \in [n]}\frac1{s}\cdot \1_{t + s > n}\le H(n) \text{ for all } t.
\end{equation*}
Hence,
\begin{equation}\label{redef:Tn}
T_{n} = \sum_{j \in [n]} v_{j}^{2}\cdot H(\theta_{j})
\end{equation}
where $\theta=\pi^{-1}$ is the inverse permutation of $\pi$ defined as $\pi(t) = p_t$. Since $\theta$ is
also a uniformly random permutation, note that,
\begin{equation}\label{eq:expectHn}
\bE \left[H(\theta_1)\right] = \frac1n\sum_{t \in [n]}\sum_{s \in [n]} \frac1{s}\cdot \1_{t + s>n} = \frac1n \, \sum_{s \in [n]}\frac1{s}\cdot \sum_{t \in [n]}\1_{t + s > n} = 1.
\end{equation}
Also,
\begin{equation}\label{eq:var_expr}
\bE \left[H(\theta_1)^2\right] = \frac1n\sum_{t \in [n]}H(t)^{2} \le H(n)^2
\end{equation}
and
\begin{equation}\label{eq:cov_expr}
\begin{split}
\bE \left[H(\theta_1)H(\theta_{2})\right] &= \frac1{n(n-1)}\sum_{t \in [n]} H(t)\cdot \sum_{s\neq
t}H(s) = \frac1{n(n-1)}\sum_{t \in n}H(t) (n - H(t)) \\
&= \frac1{n-1}(n - \bE \left[H(\theta_1)^2\right]) \le \frac n{n - 1}.
\end{split}
\end{equation}
Thus $\bE \left[T_{n}\right] = \norm{\mvv}^{2}$ in view of~\eqref{redef:Tn} and
\eqref{eq:expectHn}. Also,
\begin{align*}
\var[T_{n}] & = \bE \left[T_n^2\right] - \|\mvv\|^4 = \bE \Big[\sum_{j \in [n]} v_j^2 H(\theta_j)\Big]^2 - \|\mvv\|^4                                                                                                    \\
& = \Big(\sum_{t \in [n]} v_t^4\Big) \bE \left[H(\theta_1)^2\right] + \Big(\sum_{1 \le s \neq t \le n} v_s^2 v_t^2 \Big) \cdot \bE \left[H(\theta_1)H(\theta_{2})\right]- \|\mvv\|^4                         \\
& \stackrel{\eqref{eq:var_expr} +~\eqref{eq:cov_expr}}{\leq}  \|\mvv\|_{\infty}^2 \|\mvv\|^2 H(n)^2 + \Big(\frac{n}{n - 1} - 1 \Big)\|\mvv\|^4 \\
& \leq \|\mvv\|^4 \Big(\frac{\|\mvv\|_{\infty}^2}{\|\mvv\|^2} (\log en)^2 +
\frac{1}{n - 1}\Big).
\end{align*}
This finishes the proof of this lemma. 
\end{proof}

\section{Outlook}
Our analysis in this paper raises a few questions, which we mention here as possible directions to pursue for the future. 
\begin{enumerate}\setlength\itemsep{1ex}
\item It will be interesting to get a sub-Gaussian tail probability bound with the improved limiting variance that we get here.

\item Incorporating the simplification from the skeletal process in the algorithm itself might reduce the running time.

\item Incorporating randomization in an online version of the algorithm might help analyze the online process under {\em milder} assumptions, but it is not immediately apparent. In this article, we studied the offline version of the GSW design algorithm, and pivot randomization played a crucial role in the analysis. 

\item The GSW design algorithm can be thought of as a discrete localization process where we want to sample a random assignment vector from a distribution ``minimizing'' a given cost function. In the present 
case, the GSW design satisfies
\[
\max\left\{\phi\cdot \norm{\cov(\mvz)},(1-\phi)\cdot \norm{\cov(\xi^{-1}X^{\t}\mvz)}\right\}  \le 1,
\]
and the algorithm behaves as a discrete coordinate-wise localization process.
In general, the distribution of
the random assignment vector minimizing the cost function will be  unknown; it will depend on the covariate matrix $X$ and the robustness parameter
$\phi$. It will be interesting to incorporate the techniques from localization literature to develop an algorithmic construction for a general cost function.

\item  Our approach to finding and analyzing the skeletal process is general. This can be used to analyze  the Horvitz--Thompson estimator based on the GSW design in non-uniform probability assignment 
cases (see Remark~\ref{rem:mainres}). 
It is, therefore, natural to ask if one can 
generalize the strategy in different directions, e.g., in handling non-linear constraints.
\end{enumerate}

\vspace*{1ex}
\noindent{\bf Acknowledgments.}
SC was supported by the NSF Grant DMS-1916375. PD was supported partially by the Campus  Research Board Grant RB23016. SG's research was supported by the SERB grant SRG/2021/000032, a grant from the Department of Atomic Energy, Government of India, under project 12--R\&D--TFR--5.01--0500 and in part by a grant from the Infosys Foundation as a member of the Infosys-Chandrasekharan virtual center for Random Geometry. A significant part of this research was accomplished when SC visited the School of Mathematics at the Tata Institute of Fundamental Research (TIFR), Mumbai. SG and SC are also grateful to the International Centre for Theoretical Sciences (ICTS), Bengaluru, for their kind hospitality during the early phase of the project. The authors thank  Dan Spielman, Fredrik Sävje,
Christopher Harshaw and Peng Zhang for their encouraging and valuable comments on the first version of the manuscript. 

\bibliographystyle{chicago}

\end{document}